\documentclass[12pt,oneside,english]{amsart}
\usepackage[T1]{fontenc}
\usepackage[latin9]{inputenc}
\usepackage{geometry}
\usepackage{enumitem}
\usepackage{mathrsfs}
\usepackage{amstext}
\usepackage{amsthm}
\usepackage{amssymb}
\usepackage{bm}

\usepackage[svgnames]{xcolor}
\usepackage[bookmarksnumbered=true]{hyperref} 
\hypersetup{
	colorlinks = true,
	linkcolor = Blue,
	anchorcolor = blue,
	citecolor = Green,
	filecolor = blue,
	urlcolor = FireBrick
}
\usepackage{bbm}
\usepackage{comment}

\usepackage{algorithm}
\usepackage{algpseudocode}

\usepackage{aliascnt}
\usepackage[nameinlink]{cleveref}

\makeatletter
\numberwithin{equation}{section}
\numberwithin{figure}{section}

\theoremstyle{plain}
\newtheorem{theorem}{Theorem}[section]

\theoremstyle{definition}
\newaliascnt{definition}{theorem}
\newtheorem{definition}[definition]{Definition}
\aliascntresetthe{definition}
\crefname{definition}{Definition}{Definitions}

\theoremstyle{definition}
\newaliascnt{question}{theorem}

\aliascntresetthe{question}
\crefname{question}{Question}{Questions}

\theoremstyle{definition}
\newaliascnt{remark}{theorem}
\newtheorem{remark}[remark]{Remark}
\aliascntresetthe{remark}
\crefname{remark}{Remark}{Remarks}

\theoremstyle{plain}
\newaliascnt{corollary}{theorem}
\newtheorem{corollary}[corollary]{Corollary}
\aliascntresetthe{corollary}
\crefname{corollary}{Corollary}{Corollaries}

\theoremstyle{plain}
\newaliascnt{lemma}{theorem}
\newtheorem{lemma}[lemma]{Lemma}
\aliascntresetthe{lemma}
\crefname{lemma}{Lemma}{Lemmas}

\theoremstyle{plain}
\newaliascnt{proposition}{theorem}
\newtheorem{proposition}[proposition]{Proposition}
\aliascntresetthe{proposition}
\crefname{proposition}{Proposition}{Propositions}

\theoremstyle{definition}
\newaliascnt{example}{theorem}
\newtheorem{example}[example]{Example}
\aliascntresetthe{example}
\crefname{example}{Example}{Examples}

\theoremstyle{definition}
\newaliascnt{assumption}{theorem}
\newtheorem{assumption}[assumption]{Assumption}
\aliascntresetthe{assumption}
\crefname{assumption}{Assumption}{Assumptions}

\theoremstyle{definition}
\newaliascnt{problem}{theorem}

\aliascntresetthe{problem}
\crefname{problem}{Problem}{Problems}

\newtheorem*{definition*}{Definition}
\newtheorem*{proposition*}{Proposition} 
\newtheorem*{remark*}{Remark}
\newtheorem*{example*}{Example}
\newtheorem*{problem*}{Problem}
\newtheorem*{question*}{Question}
\newtheorem*{theorem*}{Theorem}
\newtheorem*{lemma*}{Lemma}
\newtheorem*{corollary*}{Corollary}
\newtheorem*{Acknowledgments*}{Acknowledgments}

\renewenvironment{proof}[1][\proofname]{\medskip \noindent {\bfseries #1. }}{\hfill \qedsymbol\medskip}

\MakeRobust{\ref}
\newcommand{\labeltext}[2]{
\@bsphack
\csname phantomsection\endcsname
\def\@currentlabel{#1}{\label{#2}}
\@esphack
}

\usepackage{scalerel}
\usepackage[usestackEOL]{stackengine}
\def\dashint{\,\ThisStyle{\ensurestackMath{%
  \stackinset{c}{.2\LMpt}{c}{.5\LMpt}{\SavedStyle-}{\SavedStyle\phantom{\int}}}%
  \setbox0=\hbox{$\SavedStyle\int\,$}\kern-\wd0}\int}

\usepackage{enumitem}

\DeclareRobustCommand{\SkipTocEntry}[5]{}

\newcommand{\mR}{\mathbb{R}}   % Formatting for R
\newcommand{\mN}{\mathbb{N}}   % Formatting for N
\newcommand{\mK}{\mathbb{K}}   % Formatting for K
\newcommand{\mS}{\mathbb{S}}   % Formatting for S

\newcommand{\abs}[1]{\lvert #1 \rvert}  % Formatting for the absolute value
\newcommand{\norm}[1]{\lVert #1 \rVert}  % Formatting for the norm
\newcommand{\mD}{\mathscr{D}}

\newcommand{\mM}{\mathscr{M}}

\newcommand{\mF}{\mathscr{F}}

\newcommand{\mJ}{\mathcal{J}}

\newcommand{\bfi}{\mathbf{i}}
\newcommand{\bff}{\mathbf{f}}

\newcommand{\bfu}{\mathbf{u}}
\newcommand{\bfv}{\mathbf{v}}

\newcommand{\bfz}{\mathbf{z}}

\newcommand{\sfm}{\mathsf{m}}

\newcommand{\bmmu}{{\bm\mu}}

\DeclareMathOperator{\Bal}{Bal}

\DeclareMathOperator{\supp}{supp}

\newcommand{\rmd}{\mathrm{d}}

\makeatother

\usepackage{babel}

\usepackage{orcidlink}

\begin{document}

\title[Multiphase quadrature domains (existence and uniqueness)]{Multiphase quadrature domains\\ (existence and uniqueness)}

\begin{sloppypar}

\begin{abstract}  

The primary goal of this paper is to give a precise definition and prove existence and uniqueness of multiphase quadrature domains for subharmonic functions, ensuring that the prescribed measures are supported in the \emph{interior} of the resulting domains. The approach to prove existence is based on a variational framework, where we minimize an energy functional over so called segregated states. In this respect we refine earlier results in this direction. But we also show that this approach alone is not enough for two reasons. First of all it seems hard to get existence results which ensure that the interior support condition is satisfied. And second it may happen, as we show by an example, that a multiphase quadrature domain exists but is not a minimizer of the energy functional.

The main novelty of this work is the study of minimizers, if they exist, of the energy functional over a subset of the segregated states given by a natural constraint with respect to the given measures. From this approach we are able to prove uniqueness, and also give sufficient conditions for existence.
We also give an example showing that, unlike the energy minimization and partial balayage approaches which are equivalent in the one-phase case, this equivalence breaks down already in the two-phase setting.
\end{abstract}

\author[Kow]{Pu-Zhao Kow\,\orcidlink{0000-0002-2990-3591}}
\address{Department of Mathematical Sciences, National Chengchi University, Taipei 116, Taiwan}
\email{pzkow@g.nccu.edu.tw} 

\author[Shahgholian]{Henrik Shahgholian\,\orcidlink{0000-0002-1316-7913}}
\address{Department of Mathematics, KTH Royal Institute of Technology, SE-10044 Stockholm, Sweden %\& Yerevan State University, Armenia
}
\email{henriksh@kth.se}

\author[Sj{\"o}din]{Tomas Sj{\"o}din}
\address{Department of Mathematics, Link{\"o}ping University, SE-58183 Link{\"o}ping, Sweden}
\email{tomas.sjodin@liu.se}

\subjclass[2020]{31B15, 35J05, 35J20, 35R35}
\keywords{Strong multiphase quadrature domains, variational minimization, segregation states, partial balayage}

\maketitle

\tableofcontents{}

\section{Introduction}

We work in the Euclidean space $\mR^n$, with $n \geq 2$ fixed throughout the paper. 
For any open set $D \subset \mR^{n}$, we denote by $\mM(D)$ the class of positive Radon measures on $\mR^{n}$ whose supports are compact and contained in $D$. 
The aim of this paper is to study so-called multiphase quadrature domains. We begin by recalling the classical (one-phase) notion.
A bounded open set $D$ is called a quadrature domain for subharmonic functions with respect to a measure $\mu\in\mM(D)$ if 
\begin{equation*}
\int_{D} s \, \rmd \sfm \geq \int s \,\rmd\mu, 
\end{equation*}
holds for every subharmonic function $s$ on $D$ that is integrable over $D$, with respect to the $n$-dimensional Lebesgue measure $\sfm$. 
We note that there are various notions of quadrature domains, depending on the class of test functions $s$ and the assumptions imposed on the measure $\mu$. 
Besides subharmonic functions, the most prominent examples are quadrature domains for harmonic or analytic functions. In those settings, the measure $\mu$ may even be replaced by a distribution (in the classical case, typically supported on a finite set of points).
In this paper, however, we restrict our attention to quadrature domains for subharmonic functions with positive measures.

The classical mean value inequality for subharmonic functions can be reformulated by observing that each ball $B_{R}=B_{R}(0)$ is a quadrature domain with respect to the measure $\mu=\frac{1}{\abs{B_{R}}}\delta_{0} \in \mM(B_{R})$, where $\delta_{0}$ denotes the Dirac delta supported at $0$. Quadrature domains has been extensively studied; we refer the reader to \cite{Gus04LecturesBalayage,GS05QuadratureDomain} and the references therein for details. We also note that this concept admits a natural extension to the Helmholtz equation, see \cite{GS24PartialBalayageHelmholtz,KLSS22QuadratureDomain,KSS23Minimization}.

We also recall the following PDE characterization of quadrature domains (see, e.g., \cite{Gus90QuadratureDomains} or \cite[Proposition~2.1]{KLSS22QuadratureDomain}). A bounded open set $D\subset\mR^{n}$ is a quadrature domain with respect to $\mu \in \mM(D)$ if and only if there is a distribution $u$ on $\mR^{n}$ such that 
\begin{equation}
-\Delta u = (\mu-1)\chi_{D} \text{ in $\mR^{n}$} ,\quad u \geq 0 \text{ with equality in $\mR^{n}\setminus D$.} \label{eq:1QD}
\end{equation}
A fundamental property of quadrature domains for subharmonic functions is that, for any given measure $\mu \in \mM(\mR^n)$ there exists at most one such quadrature domain, up to a set of Lebesgue measure zero. Moreover, such domains can be constructed via the partial balayage procedure, which we briefly review in \Cref{sec:partial-balayage} below. 
In general, it may happen that the function $u$ above vanishes at some points inside $D$. Roughly speaking, this phenomenon is responsible for the fact that uniqueness holds only up to sets of Lebesgue measure zero.
So, to get uniqueness, we need the additional assumption that we have a strict inequality in the PDE charactherization above, i.e. that $u>0$ in $D$. If this is satisfied we say that $D$ is a \emph{strong} quadrature domain for subharmonic functions.

Prior to this paper the case of quadrature domains with two phases has also been studied in \cite{GS12TwoPhaseQD}, which can be defined as follows. Given two disjoint bounded open sets $D_1,D_2$ and measures $\mu_1 \in \mM(D_1),\mu_2 \in \mM(D_2)$ then the pair $(D_1,D_2)$ is called a two-phase quadrature domain for subharmonic functions with respect to $(\mu_1,\mu_2)$ if there are functions $(u_1,u_2)$ such that $u_i \geq 0$ with equality in $\mR^n \setminus D_i$ and such that the difference $u_1-u_2$ satisfies
\begin{equation}
-\Delta (u_1-u_2)=(\mu_1-1)\chi_{D_1}-(\mu_2-1)\chi_{D_2} \textrm{ in } \mR^n.   
\end{equation}
Note in particular that along the common boundary $\partial D_1 \cap \partial D_2$ the difference $u_1-u_2$ is locally differentiable, and along this part we must have that the outward normal derivatives of $u_i$ with respect to $D_i$ (as long as they are well defined) are equal, because otherwise there would be an additional term in $-\Delta (u_1-u_2)$ along this set.

Also some prior work in a multi-phase setting has been conducted. Consider pairwise disjoint bounded open sets $Q_{1},\cdots,Q_{m}\subset\mR^{n}$, satisfying the following properties:
there exist measures $\mu_{1}\in\mM(Q_{1}),\cdots,\mu_{m}\in\mM(Q_{m})$ and non-negative functions $u_{1},\cdots,u_{m}$ with $Q_{i}=\{u_{i}>0\}$ such that 
\begin{equation}
-\Delta(u_{i}-u_{j})=(\mu_{i}-1)\chi_{Q_{i}}-(\mu_{j}-1)\chi_{Q_{j}} \quad\text{in $\mR^{n}\setminus\bigcup_{\ell\neq i,j}\overline{Q_{\ell}}$.} \label{eq:multiphase-PDE}
\end{equation}
In \cite{AS16MultiPhaseQD}, the authors constructed non-negative functions $u_{1},\cdots,u_{m}$ with $Q_{i}=\{u_{i}>0\}$ satisfying \eqref{eq:multiphase-PDE} by minimizing a suitable functional among all segregated states, and a generalization to the Helmholtz equation was studied in \cite{KS24MultiplaseQD}. However, these constructions do not guarantee the \emph{support condition}
\begin{equation}
\mu_{i} \in \mM(Q_{i}) ,\quad \text{for all $i=1,2,\cdots,m$.} \label{eq:support-condition}
\end{equation}
Some conditions and arguments to ensure $\supp\,(\mu_{i})\subset\overline{Q_{i}}$ has been considered, but the key point in \Cref{def:m-phase-QD} below is that the support $\supp\,(\mu_{i})$ is not allowed to reach the boundary $\partial Q_{i}$ of $Q_{i}$. It is worth noting that in the one-phase situation to guarantee that $\supp(\mu) \subset \overline{\{u>0\}}$ it is rather easy to see that it is enough that $\mu>1$ on its support, but even for measures which are singular with respect to Lebesgue measure $\supp(\mu) \subset \{u>0\}$ may fail. I.e. to get compact support is a much more subtle question. For instance we can mention the work done in \cite{Sa10SmallMod,GS14Stationary} which studies conditions to ensure that a corner in so called Hele-Shaw flow is immediately non-stationary, which is exactly the question of compact support for a specific type of measure of the form $\mu=t\delta_{x_0} + \sfm|_{D_0}$, where $D_0$, in the two-dimensional case, can be interpreted as an initial domain filled with a Newtonian fluid, and we have a source at $x_0 \in D_0$ where we continuously inject more fluid with time $t$, leading to a growing family of domains $D_t$. Then the question of compact support is simply in this setting if $\overline{D_0} \subset D_t$ for any $t>0$, which turns out to be rather involved to deal with even in this situation.

We now introduce strong multiphase quadrature domains for subharmonic functions. Henceforth when we write a sum of the form
\begin{equation*}
    \sum_{\ell \ne j} a_\ell
\end{equation*}
it is to be understood that $j$ is fixed and we sum over the set of $\ell$ in $\{1,2,\cdots,m\} \setminus \{j\}$, where $m$ will denote the number of phases, which should be understood from the context (note in particular that to sum over $\ell \ne j$ is not the same as to sum over $j\ne \ell$ with this convention, where we in the latter case should have $\ell$ fixed and sum over $j$ instead). We may also note that we have
\begin{equation*}
    \sum_{\ell \ne j} a_{\ell}-a_j=\left(\sum_{\ell \ne j} a_{\ell}\right)-a_j,
\end{equation*}
and if we need our coefficients to be $a_{\ell}-a_j$ in a sum this will be indicated by using parentheses $(a_{\ell}-a_j)$. 

\begin{definition}\label{def:m-phase-QD}
A collection of pairwise disjoint bounded open sets $Q_{1},Q_{2},\cdots,Q_{m}\subset\mR^{n}$ is called a \emph{strong $m$-phase quadrature domain} for subharmonic functions with respect to the measures $\mu_{1}\in\mM(Q_{1}),\mu_{2} \in \mM(Q_{2}),\cdots,\mu_{m}\in\mM(Q_{m})$  if there exist non-negative lower semicontinuous (LSC) functions $u_{1},u_{2},\cdots,u_{m}$ such that $Q_{i}=\{u_{i}>0\}$ for each $i$ and such that for any $j \in \{1,2,\cdots,m\}$ we have
\begin{equation}
-\Delta\left(\sum_{\ell \ne j} u_{\ell}-u_{j}\right) \leq \sum_{\ell \ne j}(\mu_{\ell}-1)\chi_{Q_{\ell}}-(\mu_{j}-1)\chi_{Q_{j}} \quad\text{in $\mR^{n}$.} \label{eq:multiphase-PDE-strong}
\end{equation} 
\end{definition}

In particular, when $m=1$, \Cref{def:m-phase-QD} coincides with the classical notion of a strong quadrature domain for subharmonic functions, and when $m=2$, it reduces to the notion of a strong two-phase quadrature domain.

Since strong $m$-phase quadrature domains for subharmonic functions are the only objects considered in this paper, we shall henceforth, unless otherwise specified, simply refer to them as $m$-phase quadrature domains.

Heuristically, for each pair $i \ne k$, we require $\nabla u_i = -\nabla u_k$ along their common boundary $(\partial Q_i \cap \partial Q_k)\setminus\bigcup_{\ell\neq i,k}\overline{Q_{\ell}}$. Moreover, at points on $\partial Q_i\setminus\bigcup_{\ell\neq i}\overline{Q_{\ell}}$, the gradient of $u_i$ should vanish, exactly as in the one-phase case.
To clarify, suppose $y \in (\partial Q_i \cap \partial Q_k)\setminus\bigcup_{\ell\neq i,k}\overline{Q_{\ell}}$ and let $U$ be an open neighborhood of $y$ that does not intersect $\bigcup_{\ell\neq i,k}\overline{Q_{\ell}}$. Then $u_\ell \equiv 0$ in $U$ for all $\ell \ne i,k$. Applying \eqref{eq:multiphase-PDE-strong} first with $j=i$ and then with $j=k$ gives 
\begin{equation}
-\Delta(u_{k}-u_{i}) = (\mu_{k}-1)\chi_{Q_{k}}-(\mu_{i}-1)\chi_{Q_{i}} \quad\text{in $U$.} 
\end{equation} 
Thus, in such a neighborhood $U$, the system reduces to the same equation that arises in the definition of two-phase quadrature domains.

By the local regularity theory for two-phase free boundary problems (see \cite[Theorem~6.12]{PSU12FreeBoundary}), within a neighborhood $V\subset U$ of $y$, both $\partial Q_i$ and $\partial Q_k$ are $C^1$ graphs and either meet tangentially at $y$ or coincide in some neighborhood of $y$.

In general, however, there may exist points at which three or more boundaries meet, and this necessitates some care in the formulation of the definition. To develop a theory that works in full generality, it is not sufficient to impose boundary conditions only on points where exactly two boundaries intersect, as in \eqref{eq:multiphase-PDE}. Indeed, even in the plane, there exist three (in fact, countably many) pairwise disjoint domains sharing the same boundary (so-called Wada's lakes) in which case such a definition would leave us with no effective boundary condition at all.

Nevertheless, provided that triple junctions (that is, points where three or more boundaries meet) are sufficiently rare (as is typically the case for reasonably regular configurations), the situation simplifies. In this setting, the boundary conditions derived from \eqref{eq:multiphase-PDE-strong} are, in essence, equivalent to requiring that the functions $u_i$ and $u_k$ satisfies $\nabla u_i = -\nabla u_k$ along $\partial Q_i \cap \partial Q_k$. Consequently, under this structural assumption, \eqref{eq:multiphase-PDE} and \eqref{eq:multiphase-PDE-strong} are effectively equivalent.

It is worth noting that, in the two-phase setting, taking the difference $u_1 - u_2$ cancels all boundary contributions in the Laplacian, leading to integral identities for appropriate harmonic functions \cite{EPS11TwoPhaseQD,GS12TwoPhaseQD}. For three or more phases, such a cancellation is no longer possible, which motivates the PDE-based generalization in our definition.

\begin{remark}\label{lem:average-L-infty}
Note that $\{Q_{i}\}_{i=1}^{m}$ is a strong $m$-phase quadrature domain with respect to $\{\mu_{i}\}_{i=1}^{m}$ if and only if it is also a strong $m$-phase quadrature domain with respect to $\{\mu_{i}^{\epsilon_{i}}\}_{i=1}^{m}$ for all $0<\epsilon_{i}<\frac{1}{2}{\rm dist}\,(\supp\,(\mu_{i}),\partial Q_{i})$ small enough, where 
\begin{equation*}
\mu_{i}^{\epsilon_i} = \mu_{i} * \frac{1}{\abs{B_{\epsilon_i}}}\chi_{B_{\epsilon_i}}. 
\end{equation*}
For completeness, we include a proof in \Cref{sec:partial-balayage}. 
In particular, this observation allows us to reduce the analysis to the case where the measures admit $L^{\infty}$ densities, a regularity assumption that is required for the energy minimization framework developed in the following. 
\end{remark}

The main goal of this paper is to further develop the existing minimization approach from \cite{AS16MultiPhaseQD,KS24MultiplaseQD} to prove existence and uniqueness results in the multi-phase setting. For the one and two-phase situations this has previously been handled by using (one- and two-phase) partial balayage, see  \cite{GS12TwoPhaseQD,GS24PartialBalayageHelmholtz,KS24MultiplaseQD}. This seems, due to a lack of monotonicity when we pass from two to three phases, not possible to generalize to three or more phases, and therefore we will instead focus on the energy minimization approach here (see \Cref{ex:tpqd-nonmin} below). The paper contains results regarding this that seem new also in the two-phase situation (see \Cref{ex:counterexample-tpqd-v3}).

First of all we have the following uniqueness theorem (which follows from  \Cref{prop:uniqueness-procedure} and \Cref{lem:average-L-infty}).
\begin{theorem} \label{thm:main-3}
There is at most one strong $m$-phase quadrature domain for subharmonic functions for any given collection of measures $\mu_1,\mu_2,\cdots,\mu_m$ in $\mM(\mR^n)$ with disjoint supports.      
\end{theorem}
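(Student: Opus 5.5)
The plan is to prove uniqueness by a variational characterization: any strong $m$-phase quadrature domain, together with its functions $(u_1,\dots,u_m)$, is the \emph{unique} minimizer of a fixed energy functional over a constrained class of segregated states that depends only on the measures. Two strong quadrature domains for the same $\mu_1,\dots,\mu_m$ would then give the same minimizer, hence the same sets $Q_i=\{u_i>0\}$.

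\emph{Reduction.} First I use \Cref{lem:average-L-infty}. Given two candidate domains $\{Q_i\}$ and $\{P_i\}$, choose $\epsilon_i$ smaller than half of both $\operatorname{dist}(\supp\mu_i,\partial Q_i)$ and $\operatorname{dist}(\supp\mu_i,\partial P_i)$. Both collections are then strong quadrature domains for the same mollified measures $\mu_i^{\epsilon_i}$. These have $L^\infty$ densities and, for small $\epsilon_i$, still have disjoint compact supports. So I may assume $\mu_i\in L^\infty$ from now on.

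\emph{Energy and constraint.} I take the energy
\[
J(v)=\sum_{i=1}^m\int\Bigl(\tfrac12|\nabla v_i|^2-(\mu_i-1)v_i\Bigr)\,\rmd\sfm .
\]
It is defined on segregated states $v=(v_1,\dots,v_m)$ with $v_i\ge0$, $v_i\in H^1$ compactly supported, and $v_iv_j=0$ for $i\neq j$. I restrict to the natural constraint class: states with $v_j=0$ a.e.\ on $\supp\mu_i$ for every $i\neq j$ (possibly also $v_i>0$ near $\supp\mu_i$). Both $u$ and the second solution $\tilde u$ belong to this class, because $\mu_i\in\mM(Q_i)$ and the $Q_i$ are disjoint.

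\emph{Main step: a coercivity inequality.} For a strong quadrature domain $u$ and any admissible $v$, I aim to prove
\[
J(v)-J(u)\ \ge\ \tfrac12\sum_i\|\nabla(v_i-u_i)\|_{L^2}^2 .
\]
Expanding the square, this reduces to showing
\[
\sum_i\int\bigl(\nabla u_i\cdot\nabla(v_i-u_i)-(\mu_i-1)(v_i-u_i)\bigr)\ge0 .
\]
I test \eqref{eq:multiphase-PDE-strong} for index $j$ against the nonnegative function $v_j$, and sum over $j$. Separately, I identify the terms with $v=u$: inside $Q_j$ all $u_\ell$ with $\ell\ne j$ vanish, so the inequalities for different $j$ force the equation $-\Delta u_j=\mu_j-1$ there. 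This should give $\int|\nabla u_j|^2=\int(\mu_j-1)u_j$.

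The difficulty lies in the cross terms $\int\nabla u_\ell\cdot\nabla v_j$ for $\ell\neq j$, together with the free-boundary measures $\Delta u_\ell$ on $\partial Q_\ell$. To control them, I would use segregation, $v_j=0$ where $v_\ell>0$, and the sign information in \eqref{eq:multiphase-PDE-strong}: the singular part of $\Delta\bigl(\sum_{\ell\ne j}u_\ell-u_j\bigr)$ is a nonpositive measure on the boundaries. The constraint is used to absorb the $\mu$-terms when $v_j$ enters $Q_\ell$: on $\supp\mu_\ell$ one has $v_j=0$, so there $v_j$ only sees $(\mu_\ell-1)\chi_{Q_\ell}=-1$, which has the right sign.

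Given the coercivity inequality, the conclusion is immediate. Applying it both ways, with $v=\tilde u$ against $u$ and with $v=u$ against $\tilde u$, and adding, gives $\sum_i\|\nabla(u_i-\tilde u_i)\|^2\le0$. Hence $u_i=\tilde u_i$ a.e. By lower semicontinuity and superharmonicity-type regularity (each $u_i$ is the l.s.c.\ representative), the equality holds everywhere, so $Q_i=\{u_i>0\}=P_i$.

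I expect the main obstacle to be the sign of the cross terms in the main step, especially near points where three or more phases meet. There the boundary measures are only controlled through the combined inequalities. If the direct estimate fails, my fallback is to compare the functions pointwise instead: I would show that $\sum_{\ell\ne j}u_\ell-u_j$ and the corresponding expression for $\tilde u$ are sub- and supersolutions of the same obstacle-type problem, and conclude by a maximum-principle argument.
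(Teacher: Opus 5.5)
\textbf{Where the proposal breaks.} Your overall architecture matches the paper's, namely \Cref{prop:uniqueness-procedure} combined with \Cref{lem:average-L-infty}. You reduce to $L^\infty$ densities, show that every strong quadrature domain minimizes $\mJ_{\bff}$ over a constrained class of segregated states, and conclude. Your class, with $v_j=0$ on $\supp\mu_\ell$ for $\ell\ne j$, serves as well as $\mS_{m,\bmmu}$. But your main step is false.

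Set $g_i=v_i-u_i$. The inequality $J(v)-J(u)\ge\frac12\sum_i\|\nabla g_i\|_{L^2}^2$ is equivalent to nonnegativity of the first variation $L=\sum_i\int(\nabla u_i\cdot\nabla g_i-(\mu_i-1)g_i)\,\rmd\sfm$. Write $-\Delta u_i=(\mu_i-1)\chi_{Q_i}-\gamma_i$ with $\gamma_i\ge0$ carried by $\partial Q_i$. Then, formally,
\[
L=\sum_i\int_{\mR^n\setminus Q_i}v_i\,\rmd\sfm-\sum_i\int v_i\,\rmd\gamma_i .
\]
The definition only forces $\gamma_j\le\sum_{\ell\ne j}\gamma_\ell$, not $\gamma_j=0$. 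Indeed, if all $\gamma_i$ vanished, each $Q_i$ would be the one-phase domain $\omega(\mu_i)$. So some $\gamma_i\ne0$ whenever these overlap, as in \Cref{ex:tpqd-nonmin}. Pushing phase $i$ a distance of order $\epsilon$ across a part of $\partial Q_i$ charged by $\gamma_i$ makes the negative term of order $\epsilon$ and the positive one of order $\epsilon^2$.

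Concretely, for $m=2$ take $v_1=(u_1-u_2+\epsilon\psi)^+$ and $v_2=(u_1-u_2+\epsilon\psi)^-$. Here $0\le\psi\in C_c^\infty$ is supported near a point of $\partial Q_1\cap\partial Q_2$, away from the supports, where $\nabla(u_1-u_2)\ne0$. Then:
\begin{itemize}
\item this $v$ is admissible;
\item $J(v)-J(u)=O(\epsilon^2)$, since it is the variation from the proof of \Cref{lem:improvement-prop1-AS16}, whose first-order term cancels by the transmission condition;
\item $\|\nabla(v_2-u_2)\|_{L^2}^2\ge\int_{\{0<u_2<\epsilon\psi\}}|\nabla u_2|^2\,\rmd\sfm$, which is of order $\epsilon$.
\end{itemize}
So no handling of the cross terms can give your estimate. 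The segregation constraint is not convex, and there is no coercivity of that strength. Your inequality does hold for $m=1$, precisely because then $\gamma=0$.

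\textbf{What the paper proves instead.} The true estimate is weaker. Test the inequality for index $i$ only with $g_i^+$; the part $g_i^-$ lives in $Q_i$, where $-\Delta u_i=\mu_i-1$. Keeping the cross terms gives
\[
\mJ_{\bff}(\bfv)\ge\mJ_{\bff}(\bfu)+\sum_i\Bigl(\int|\nabla g_i|^2\,\rmd\sfm-2\sum_{\ell\ne i}\int_{\{u_\ell>0\}}\nabla g_i\cdot\nabla g_\ell\,\rmd\sfm\Bigr),
\]
up to discarded nonnegative terms such as $2\int_{\{u_i=0\}}v_i\,\rmd\sfm$. The total integrand is nonnegative. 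At a.e.\ point of $Q_j$, segregation of $\bfv$ leaves at most one nonzero $g_k$ with $k\ne j$, so the integrand reduces to $|\nabla g_j-\nabla g_k|^2$. Off $\bigcup_j Q_j$ it reduces to $\sum_i|\nabla g_i|^2$.

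This only yields $\mJ_{\bff}(\bfv)\ge\mJ_{\bff}(\bfu)$, so uniqueness has to come from the equality case. Equality forces $\{v_i>0\}\subset\{u_i>0\}$, for instance through the discarded term. After that each $g_i$ lives in $Q_i$, where the problem is convex, and $\mJ_{\bff}(\bfv)=\mJ_{\bff}(\bfu)+\sum_i\int|\nabla g_i|^2\,\rmd\sfm$.

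Your step of applying the inequality both ways and adding can be rescued along these lines if you keep the discarded terms. The coercivity estimate you set out to prove, however, cannot be proved.
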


When it comes to existence one can only expect to give sufficient conditions, which is even the case in the one-phase situation. We recall that, vaguely stated, in the one-phase situation the measure needs to be sufficiently concentrated on its support to ensure existence. This is still the case in the multi-phase situation (indeed as we will see below unless strong one-phase quadrature domains exists for each of the measures $\mu_{i}$ then there can't be a $m$-phase quadrature domain of this type with respect to $\mu_{1},\mu_{2},\cdots,\mu_{m}$ either), but additionally the measure $\mu_i$ must in some sense dominate over all the other measures $\mu_j$ close to its support.

In particular we will prove the following.

\begin{theorem}\label{thm:main-1-rough}
Let $\mu_{1},\mu_{2}\cdots,\mu_{m}$ in $\mM(\mR^n)$ 
where each measure satisfies the concentration condition
\begin{equation*}
    \limsup_{r \to 0_+} \frac{\mu_i(B_r(x))}{\sfm(B_r)} > 2^n \textrm{ for all } x \in \supp{(\mu_i)}.
\end{equation*}
Then there is for each $i$ a unique strong one-phase quadrature domain for subharmonic functions $\omega_i$ with respect to $\mu_i$. If, furthermore,  
\begin{equation*}
\supp(\mu_i) \cap \overline{\omega_j}=\emptyset \textrm{ for all } i \ne j,
\end{equation*}
then there is a strong $m$-phase quadrature domain for subharmonic functions with respect to $\mu_1,\mu_2,\cdots,\mu_m$.
\end{theorem}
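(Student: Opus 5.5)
The proof has two parts: one-phase existence from the concentration condition, then a multiphase construction in which the phases do not interact.

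\textbf{One-phase part.} For each $i$ we form the partial balayage $\Bal(\mu_i,1)$ and set $\omega_i=\{u_i>0\}$, where
\begin{equation*}
u_i=U^{\mu_i}-U^{\Bal(\mu_i,1)}\geq 0 .
\end{equation*}
It is standard that $\Bal(\mu_i,1)=\chi_{\omega_i}\sfm+\mu_i|_{\mathbb{R}^n\setminus\omega_i}$ and that $-\Delta u_i=\mu_i-\Bal(\mu_i,1)$. The task is to show $\supp(\mu_i)\subset\omega_i$; then \eqref{eq:1QD} holds with $u_i>0$ in $\omega_i$, and uniqueness is the case $m=1$ of \Cref{thm:main-3}.

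Suppose instead that $x\in\supp(\mu_i)$ and $u_i(x)=0$. Since $u_i\ge0$ is superharmonic off $\supp\Bal$ and $\Delta u_i\le 1$ in the relevant sense, we compare $u_i$ with a ball mean value. Using $u_i(x)=0$ and
\begin{equation*}
-\Delta u_i\ge \mu_i-\chi_{B_{2r}(x)}\quad\text{near }x,
\end{equation*}
the mean value formula over $B_{2r}(x)$ for the nonnegative function $u_i$ gives a Riesz-type inequality. Roughly,
\begin{equation*}
0=u_i(x)\ge \int G_{B_{2r}(x)}(x,\cdot)\,\rmd\mu_i-\int G_{B_{2r}(x)}(x,\cdot)\,\rmd\sfm,
\end{equation*}
and we bound the first integral below by its contribution from $B_r(x)$. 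With the explicit Green function this yields $\mu_i(B_r(x))\le 2^n\sfm(B_r)$ for all small $r$, contradicting the concentration hypothesis.

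We expect the constant to come out exactly $2^n$: the mass of $\mu_i$ in $B_r$ is swept to at most $B_{2r}$, and a fixed point can be uncovered only if the mass fits. A cleaner route is the known fact that if $x\notin\omega_i$ then $\mu_i(B_r(x))\le\sfm(B_{2r}(x))$, which follows from the Lebesgue-measure quadrature inequality on balls tested against $\Bal$. Making this step rigorous is the main technical obstacle in the one-phase part.

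\textbf{Multiphase part.} Assume $\supp\mu_i\cap\overline{\omega_j}=\emptyset$ for $i\ne j$. If the $\omega_i$ are pairwise disjoint, then $Q_i=\omega_i$ with the $u_i$ above already works: summing, \eqref{eq:multiphase-PDE-strong} reduces to $\Delta u_\ell\le\chi_{\omega_\ell}$ and $-\Delta u_j=(\mu_j-1)\chi_{\omega_j}$. The inequality $\Delta u_\ell\le\chi_{\omega_\ell}$ holds because $\Delta u_\ell=\chi_{\omega_\ell}-\mu_\ell$ globally.

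In general the $\omega_i$ overlap, and then we use the variational framework. By \Cref{lem:average-L-infty} we replace $\mu_i$ by smoothed $L^\infty$ densities. We then minimize the energy over segregated states $(u_1,\dots,u_m)$ subject to the natural constraint $u_i\le U^{\mu_i}-U^{\Bal(\mu_i,1)}$, which forces $\{u_i>0\}\subset\omega_i$. Existence of a minimizer follows from the direct method. The Euler--Lagrange inequalities give \eqref{eq:multiphase-PDE-strong} away from where the constraint is active.

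The key steps are then:
\begin{enumerate}
\item Show each $u_i$ is bounded below near $\supp\mu_i$ by comparison with the one-phase solution. The point is that $\mu_i$ dominates there, since the other phases cannot enter a neighborhood of $\supp\mu_i$: that neighborhood lies outside $\overline{\omega_j}$. Consequently $\supp\mu_i\subset\{u_i>0\}$.
\item Check that the constraint does not create extra measure in the PDE. Where $u_i=\omega_i$'s potential touches, both are harmonic-compatible, so the constraint contributes only a nonpositive term.
\end{enumerate}

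The interior support condition in step 1 is where the hypothesis $\supp\mu_i\cap\overline{\omega_j}=\emptyset$ is used. This step, together with verifying the one-sided inequality at triple junctions, is the main difficulty of the multiphase part.
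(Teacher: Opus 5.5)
Your one-phase step is fine. The Riesz/Green estimate on $B_{2r}(x)$ gives $\mu_i(B_r(x))\le c_n\,\sfm(B_r)$, with $c_n=2(n-2)/(1-2^{2-n})$ for $n\ge3$ and $c_2=2/\ln 2$; both constants are below $2^n$. The same argument works for $\omega_D(\mu_i)$ in any open $D\supset\supp\mu_i$ once $B_{2r}(x)\subset D$. (The paper simply quotes \Cref{lem:concentration-condition-one-phase} here.)

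\textbf{The gap is in the multiphase part, and it comes from the constraint $u_i\le W^{\mu_i}$ that you impose.}

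\emph{Step 2.} The variation that produces \eqref{eq:multiphase-PDE-strong} for the index $j$ (see the proof of \Cref{lem:improvement-prop1-AS16}) \emph{raises} $u_j$: by $\epsilon\psi$ on $\{u_j>0\}$ and on the set where all phases vanish. This is inadmissible on the contact set $\{u_j=W^{\mu_j}\}$ and outside $\omega_j$. The multiplier of an upper obstacle on $u_j$ enters $-\Delta\bigl(\sum_{\ell\ne j}u_\ell-u_j\bigr)$ with a \emph{positive} sign. So your claim that the constraint ``contributes only a nonpositive term'' has the sign backwards, and ``harmonic-compatible'' is not an argument.

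\emph{Step 1.} It is blocked for the same reason. To get a lower bound near $\supp\mu_i$ you need $-\Delta u_i\ge\mu_i-1$ in all of $D_i=\mR^n\setminus\overline{\bigcup_{\ell\ne i}\omega_\ell}$. That inequality comes from upward variations $u_i+t\phi$ with $0\le\phi\in C_c^\infty(D_i)$, which your constraint forbids wherever $u_i$ touches $W^{\mu_i}$ (e.g.\ on $D_i\setminus\omega_i$). Also, ``comparison with the one-phase solution'' cannot mean $W^{\mu_i}$, since that is an upper bound for $u_i$.

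\textbf{The missing observation is that the constraint is redundant.} By \cite{AS16MultiPhaseQD}, every minimizer over $\mS_m$ already satisfies $u_{*,i}\le W^{\mu_i}$, see \eqref{eq:monotonicity-support}. Hence a constrained minimizer is just a minimizer over $\mS_m$, and you should work there.

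\begin{enumerate}
\item \Cref{lem:improvement-prop1-AS16} then gives \eqref{eq:multiphase-PDE-strong} in all of $\mR^n$, junction points included. There is nothing separate to verify at triple junctions.
\item Since the other phases vanish on $D_i$, the upward variations of $u_i$ are admissible. This is \Cref{lem:support-of-measure-minimizer} combined with \eqref{eq:m-phase-PDE-TS}. It gives $U^{\mu_i}-u_{*,i}\in\mF_{D_i}(\mu_i)$, i.e.\ $u_{*,i}\ge W^{\mu_i}_{D_i}$.
\item The correct comparison function is therefore the partial balayage relative to $D_i$. Your concentration estimate applied in $D_i$ yields $\supp\mu_i\subset\omega_{D_i}(\mu_i)\subset\{u_{*,i}>0\}$.
\item \Cref{cor:sufficient-mQD} then finishes the proof.
\end{enumerate}

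\textbf{Mollification.} After mollifying, $\mu_i^\epsilon$ no longer satisfies the concentration condition. What you must carry through the smoothing is the conclusion $\supp\mu_i\subset\omega_{D_i}(\mu_i)$, not the hypothesis. This is done via \Cref{lem:stability-mollification}, with $\epsilon$ chosen relative to $\omega_{D_i}(\mu_i)$, a set known in advance rather than the unknown $Q_i$. This is why the paper states \Cref{thm:main-1-ts1} with that hypothesis.
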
 

In many respects, the strongest existence result of this paper is contained in \Cref{cor:main-exist-cor}, but since its formulation is somewhat involved,  here we  just mention the following theorem, which is a rather simple consequence of this corollary. 

\begin{theorem}\label{thm:main-2}
 If each $\mu_{i}\ge 0$, $i=1,2,\cdots,m$, is a finite nontrivial  sum of point masses, and they have disjoint supports, then there is a strong $m$-phase quadrature domain for subharmonic functions with respect to $\mu_1,\mu_2,\cdots,\mu_m$. 
\end{theorem}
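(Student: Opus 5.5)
Since \Cref{thm:main-2} is described as a simple consequence of the stronger existence result, the plan is to reduce it to \Cref{thm:main-1-rough}. Point masses are the most concentrated measures possible. The only obstruction is the separation hypothesis $\supp(\mu_i)\cap\overline{\omega_j}=\emptyset$, which fails if the masses are large compared with the distances between the points. So the main work is to replace \Cref{thm:main-1-rough} by the more refined \Cref{cor:main-exist-cor}, whose hypotheses presumably ask that near its own support each $\mu_i$ dominate the other measures in a local sense. A finite sum of Dirac masses satisfies this trivially, because a Dirac mass dominates everything at small scales.

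Concretely, write $\mu_i=\sum_{k} a_{ik}\delta_{x_{ik}}$ with $a_{ik}>0$ and all points $x_{ik}$ distinct. The steps are as follows.

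\textbf{Step 1 (concentration).} At every $x_{ik}$ we have $\mu_i(B_r(x_{ik}))/\sfm(B_r)\ge a_{ik}/\sfm(B_r)\to\infty$ as $r\to0_+$. Hence the concentration condition with constant $2^n$ holds at each support point, and each one-phase quadrature domain $\omega_i$ exists.

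\textbf{Step 2 (interior support).} The strong one-phase quadrature domain of a point mass contains a ball around that point. Applying \Cref{lem:average-L-infty}, we replace $\mu_i$ by its mollifications $\mu_i^{\epsilon}$, which have $L^\infty$ densities. Then the constrained energy minimization over segregated states applies, and the minimizer exists.

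\textbf{Step 3 (interior support of the multiphase minimizer).} We must show that each $x_{ik}$ lies in the interior of $Q_i=\{u_i>0\}$, with a quantitative radius. The intended argument is to compare $u_i$ locally with the fundamental-solution profile $a_{ik}\Phi(\cdot-x_{ik})$. Near $x_{ik}$ the function $u_i-\sum_{\ell\ne i}u_\ell$ is superharmonic up to a bounded error and has a logarithmic or Newtonian singularity of strength $a_{ik}$. The competing phases are bounded in a fixed neighbourhood, since their own masses sit at positive distance. Therefore $u_i$ dominates in a small ball $B_\rho(x_{ik})$, and in particular $u_i>0$ there. Here $\rho$ depends only on $a_{ik}$, on the minimal pairwise distance between the points, and on the total masses.

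\textbf{Step 4 (limit).} Taking $\epsilon<\rho/2$, the interior support condition gives \eqref{eq:multiphase-PDE-strong} for the original measures. This again uses \Cref{lem:average-L-infty}.

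\textbf{Main obstacle.} Step 3 is the hardest part: one needs uniform bounds on the other phases $u_\ell$ near $x_{ik}$ that do not degenerate with $\epsilon$. I would first try to obtain an $L^\infty$ bound for $u_\ell$ on $B_{d/2}(x_{ik})$, where $d$ is the minimal separation. Such a bound should follow from subharmonicity of $u_\ell$ away from $\supp\mu_\ell^\epsilon$, together with a uniform bound on the diameter of the segregated minimizers coming from total mass. Once this bound is in hand, it should suffice to verify the domination hypothesis in \Cref{cor:main-exist-cor}.
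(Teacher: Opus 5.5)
Your architecture matches the paper's. You mollify the atoms to $\mu_i^\epsilon$ and take the unique minimizer over $\mS_{m,\bmmu}$. This class, rather than $\mS_m$, is what guarantees that the atom $a_{ik}$ actually appears in $-\Delta(u_i-\sum_{\ell\ne i}u_\ell)$, because $\mu_i^\epsilon(\{u_i=0\})=0$; for an $\mS_m$-minimizer another phase could swallow the atom. You then show $\overline{B_\epsilon(x_{ik})}\subset\{u_i>0\}$ and undo the mollification by adding $U^{\mu_i}-U^{\mu_i^\epsilon}$. Step 1 and the initial reduction to \Cref{thm:main-1-rough} play no role, since its separation hypothesis can fail.

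The genuine gap is Step 3. The uniform-in-$\epsilon$ bound on the competing phases near $x_{ik}$ is never established, and the mechanism you propose would not give it. Subharmonicity of $u_\ell$ off $\supp\mu_\ell^\epsilon$, together with a bound on the size of $\{u_\ell>0\}$, only lets the maximum principle bound $u_\ell$ by its values on $\partial\supp\mu_\ell^\epsilon$. Those values are not bounded independently of $\epsilon$: near its own atoms $u_\ell$ is of size comparable to $\Psi(\epsilon)$.

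The missing idea is to compare $u_\ell$ with the shifted potential of its own measure, $w_1(y)=\int(\Psi(|y-z|)-\Psi(2R_2))\,\rmd\mu_\ell^\epsilon(z)$. This function is non-negative on $\overline{B_{R_2}(x_{ik})}\supset\{u_\ell>0\}$, and $-\Delta(w_1-u_\ell)=1\ge0$ in $\{u_\ell>0\}$. Hence $u_\ell\le w_1\le M(\Psi(R)-\Psi(2R_2))=:K$ on $\partial B_R(x_{ik})$ whenever $\mu_\ell^\epsilon(B_{2R}(x_{ik}))=0$. Only the total mass $M$ enters, so the bound is uniform in $\epsilon$.

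With this $K$ your barrier does work. By \Cref{lem:improvement-prop1-AS16-SMMU}, $h=u_i-\sum_{\ell\ne i}u_\ell$ satisfies $-\Delta h\ge\mu_i^\epsilon-1$ in $B_R(x_{ik})$ for $R$ below the separation of the atoms. Compare $h$ with $U^{\mu_i^\epsilon|_{B_\epsilon(x_{ik})}}-a_{ik}\Psi(R)-\frac{R^2-|y-x_{ik}|^2}{2n}-K$. This gives $u_i>0$ on a ball whose radius does not depend on $\epsilon$, a slightly more direct route than the kink argument in \Cref{thm:fund-ex-thm1}.

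You need not redo any of this, though, because the bound is already built into \Cref{thm:fund-ex-thm1}, whose hypothesis is explicit. Take $A_i=\bigcup_k B_\epsilon(x_{ik})$, $\delta=\epsilon$ and $C=a_{ik}/|B_\epsilon|$. The hypothesis then reads $a_{ik}>|B_\epsilon|+\frac{M(\Psi(R)-\Psi(2R_2))}{\Psi(\epsilon)-\Psi(R)}$, where $M,R,R_2$ can be fixed independently of $\epsilon$. The right-hand side tends to $0$ because $\Psi(\epsilon)\to\infty$. This check, together with $\min_k a_{ik}/|B_\epsilon|>1$ for \Cref{assu:standing-assumption-point-mass-v2}, is exactly what the paper verifies before invoking \Cref{cor:main-exist-cor} and undoing the mollification.
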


\section{Partial balayage\label{sec:partial-balayage}} 
% \addtocontents{toc}{\SkipTocEntry}

For the reader's convenience, we recall some terminology and results about the so called partial balayage, see for instance  \cite{GS94PartialBayage,GS12TwoPhaseQD,GS24PartialBalayageHelmholtz}, see also \cite{KLSS22QuadratureDomain}. Throughout, we assume $n\ge 2$ without further mention. Given an open set $D\subset\mR^{n}$ and a positive measure $\mu$ with compact support in $\mR^{n}$ and a non-negative density function $\rho \in L^{\infty}(\mR^n)$ such that $\rho \geq {\rm constant} >0$ outside a compact set, we define 
\begin{equation*}
\mF_{D,\rho}(\mu) := \left\{ v \in \mD'(\mR^{n}) : 
\begin{aligned}
& \text{$-\Delta v\le \rho$ in $D$} ,\quad \text{$v\le U^{\mu}$ in $\mR^{n}$} \\ 
\end{aligned}
\right\},
\end{equation*}
where $U^{\mu}$ denotes the Newtonian potential of $\mu$ (see, e.g., \cite[(4.1)]{GT01Elliptic}), defined by 
\begin{equation}
U^{\mu}(x):=\int\Psi(|x-y|)\,\rmd\mu(y) ,\quad \Psi(r)=\left\{\begin{aligned}
& -\frac{1}{2\pi}\log\abs{r}, && n=2, \\ 
& \frac{1}{n(n-2)\abs{B_1}\abs{r}^{n-2}}, && n\ge 3, 
\end{aligned}\right. \label{eq:fund}
\end{equation}
where 
\begin{equation}
\text{$\abs{B_1} = \frac{2\pi^{n/2}}{n\Gamma(n/a)}$ is the volume of the unit ball in $\mR^{n}$.} \label{eq:volume-unit-ball}
\end{equation}
Note that, in the sense of distributions, 
\begin{equation*}
-\Delta U^{\mu} = \mu.
\end{equation*}
Standard potential-theoretic arguments \cite[Section~3.7]{AG01Potential} show that $\mF_{D,\rho}(\mu)$ has a largest element, which furthermore admits a continuous representative $V_{D,\rho}^{\mu}$. The function $V_{D,\rho}^{\mu}$ is also called the \emph{partial reduction} of $U^{\mu}$ \cite{GS09PartialBalayage} with respect to $\rho$ over $D$. 
In many respects the most important function for us will be
\begin{equation*}
    W_{D,\rho}^{\mu}=U^{\mu}-V_{D,\rho}^{\mu},
\end{equation*}
and we also define the \emph{non-contact set} by 
\begin{equation*}
\omega_{D,\rho}(\mu) := \{V_{D,\rho}^{\mu}<U^{\mu}\} = \{W_{D,\rho}^{\mu}>0\}, 
\end{equation*}
which, up to a polar subset of $\partial D$, is known to always be a bounded open subset of $D$ under the above assumptions. Also note that $W_{D,\rho}^{\mu}$ is the smallest  function satisfying
\begin{equation}
    W_{D,\rho}^{\mu} \ge 0 \textrm{ in } \mR^n \textrm{ and } -\Delta W_{D,\rho}^{\mu} \ge \mu-\rho \textrm{ in } D, \label{eq:obstacle-problem}
\end{equation}
(or, more precisely, a specific representative of the smallest distribution satisfying the above in the sense of distributions).
The following is furthermore known to hold 
\begin{align}
&W_{D,\rho}^{\mu} \le W_{D,\rho}^{\eta} \quad \text{and} \quad \omega_{D,\rho}(\mu) \subset \omega_{D,\rho}(\eta) \textrm{ if } \mu \leq \eta, \label{eq:monotonicity-in-measure-partial-reduction}\\
&W_{D_{1},\rho}^{\mu} \ge W_{D_{2},\rho}^{\mu} \quad \text{and} \quad \omega_{D_{2},\rho}(\mu) \subset \omega_{D_{1},\rho}(\mu)\cap D_{2} \quad \text{for all open sets $D_{1}\supset D_{2}$.} \label{eq:monotonicity-partial-reduction}
\end{align} 
The \emph{partial balayage} is now defined by 
\begin{equation*}
\Bal_{D,\rho}(\mu) := -\Delta V_{D,\rho}^{\mu}, 
\end{equation*}
where the above again should be interpreted in the sense of distributions, and it is then always a positive measure under our assumptions.

Using $V_{D,\rho}^{\mu}=U^{\mu}$ in $\mR^{n}\setminus \omega_{D,\rho}(\mu)$, one can easily check that $V_{D,\rho}^{\mu}=U^{\Bal_{D,\rho}(\mu)}$. The partial balayage also satisfies the following structural decomposition: 
\begin{equation}
\Bal_{D,\rho}(\mu) = \rho|_{\omega_{D,\rho}(\mu)} + \mu|_{\mR^{n}\setminus\omega_{D,\rho}(\mu)} + \nu, \label{eq:structure-partial-balayage}
\end{equation}
where $\nu\ge 0$ is supported on $\partial D\cap \partial \omega_{D,\rho}(\mu)$, and $\Bal_{D,\rho}(\mu) \le \rho$ in $D$. 

When $D=\mR^n$ we drop it from the notation and write e.g. $V_{\rho}^{\mu}=V_{\mR^n,\rho}^{\mu}$ (and similarly for the other entities above).
Furthermore, in case $\rho=1$ then we will drop $\rho$ from the notation and simply  write $V_{D}^{\mu}=V_{D,1}^{\mu}$. In particular when $D=\mR^n$ and $\rho=1$, which is the case connected to quadrature domains, we drop both from the notation and write for instance $V^{\mu}=V_{\mR^n,1}^{\mu}$ with these conventions.

In case $\mu$ belongs to $L^{\infty}(\mR^n)$, then one can alternatively get $W_{D,\rho}^{\mu}$ from the following energy minimization approach, which is what we will work with for the case of multiple phases below. To do so let us introduce the function $f=\mu-\rho$, the class $\mK(D) =\{v \in H^1_0(D): v \ge 0\}$ and for $v \in H^1_0(D)$ the functional
\begin{equation}\label{eq:definition-of-onephase-J_f}
J_f(v) = \int_D |\nabla v|^2 \,\rmd \sfm - 2\int_D vf \,\rmd \sfm. 
\end{equation}

Then, for any $v \in \mK(D)$ we have
\begin{align*}
J_f(v) &= \int_D |\nabla v|^2 \,\rmd \sfm - 2\int_D vf \,\rmd \sfm  \overset{\eqref{eq:obstacle-problem}}{\ge}  \int_D |\nabla v|^2 \,\rmd \sfm - 2\int_D v (-\Delta W_{D,\rho}^{\mu}) \,\rmd \sfm \\
 &= \int_D |\nabla v|^2 \,\rmd \sfm - 2\int_D \nabla v \cdot \nabla W_{D,\rho}^{\mu} \,\rmd \sfm \ge -\int_D |\nabla W_{D,\rho}^{\mu}|^2 \,\rmd \sfm =J_f(W_{D,\rho}^{\mu}).
\end{align*}
Hence, by the above and strict convexity, we see that 
\begin{equation}\label{eq:minimizerJf}
W_{D,\rho}^{\mu} \textrm{ is the unique minimizer of }J_f \textrm{ over } \mK(D).
\end{equation}
When $D=\mR^n$ we again drop it from the notation, and then we may note that $\mK=H^1_0(\mR^n)=H^1(\mR^n)$.

In general, whether  $\mu \in \mM(D)$ has compact support inside $\omega_D(\mu)$ or not depends on $D$. However, if $\mu$ is sufficiently concentrated this is always the case. To understand this, suppose first that $\mu$ has a point mass at $x$ (i.e. $\mu (\{x\})>0$) and $D$ contains $x$, then for a small enough number $\varepsilon>0$ we have that both $\varepsilon \delta_x \le \mu$ and $\omega(\varepsilon \delta_x) \subset D$. Then it is easy to see that
$\omega_D(\mu) \supset \omega_D(\varepsilon \delta_x)=\omega(\varepsilon \delta_x)$, and the latter is an open ball centered at $x$. More generally  the following result holds (see for instance the proof of  \cite[Corollary~5.2]{GS12TwoPhaseQD}).
\begin{lemma}\label{lem:concentration-condition-one-phase}
Let $\mu \in \mM(\mR^n)$.
\begin{enumerate} 
\renewcommand{\labelenumi}{\theenumi}
\renewcommand{\theenumi}{\rm (\alph{enumi})} 
\item \label{itm:a} If $\supp(\mu) \subset B_r(x)$ and $\mu(B_r(x)) >2^n\sfm(B_r)$, then $\overline{B_r(x)} \subset \omega(\mu)$.
\item \label{itm:b} If $\mu$ satisfies the concentration condition 
    \begin{equation*}
        \limsup_{r \to 0_+} \frac{\mu(B_r(x))}{\sfm(B_r)} > 2^n \textrm{ for all } x \in \supp(\mu)
    \end{equation*}
    then for any open set $D$ which contains $\supp(\mu)$ we also have $\supp(\mu) \subset \omega_D(\mu)$.
\end{enumerate} 
\end{lemma}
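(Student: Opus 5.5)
The plan is to prove \ref{itm:a} by contradiction using the obstacle-problem characterization \eqref{eq:obstacle-problem} together with a mean value identity. Then \ref{itm:b} follows from \ref{itm:a} by localization and the monotonicity properties \eqref{eq:monotonicity-in-measure-partial-reduction}--\eqref{eq:monotonicity-partial-reduction}.

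\emph{Part \ref{itm:a}.} Write $t=\mu(B_r(x))>2^n\sfm(B_r)$. Suppose some $y\in\overline{B_r(x)}$ lies outside $\omega(\mu)$, so that $W^{\mu}(y)=0$, which is the minimum value of $W^{\mu}\ge 0$. Since $D=\mR^n$ and $\rho=1$, the structure formula \eqref{eq:structure-partial-balayage} gives $\nu=0$ and $\Bal(\mu)\le 1$. Hence
\begin{equation*}
\Delta W^{\mu}=\Bal(\mu)-\mu\le 1-\mu
\end{equation*}
as measures on $\mR^n$. Every point of $\supp(\mu)\subset B_r(x)$ lies within distance $2r$ of $y$, so $\supp(\mu)\subset B_{2r}(y)$, and $\sfm(B_{2r})=2^n\sfm(B_r)<t$.

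I will apply the Riesz/Green mean value formula on $B_R(y)$ with $R$ close to $2r$ (slightly larger, if needed, so that $\mu(B_R(y))=t$ while still $\sfm(B_R)<t$):
\begin{equation*}
\dashint_{B_R(y)}W^{\mu}\,\rmd\sfm-W^{\mu}(y)=\int_{B_R(y)}G_R(y,z)\,\rmd(\Delta W^{\mu})(z),
\end{equation*}
where $G_R(y,\cdot)\ge0$ is the explicit radial kernel of the solid mean value formula. This kernel is normalized so that $\int_{B_R(y)}G_R\,\rmd\sfm=\tfrac{R^2}{2(n+2)}$, i.e.\ the value obtained by testing with $|z-y|^2/(2n)$. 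Inserting $\Delta W^{\mu}\le 1-\mu$ yields
\begin{equation*}
0\le \dashint_{B_R(y)}W^{\mu}\,\rmd\sfm\le \int_{B_R(y)}G_R\,\rmd\sfm-\int G_R\,\rmd\mu .
\end{equation*}
The goal is to show that the right-hand side is negative, which contradicts $W^{\mu}\ge0$.

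\emph{Main obstacle.} The hard step is the kernel estimate $\int G_R\,\rmd\mu>\int_{B_R(y)}G_R\,\rmd\sfm$. Since $G_R(y,\cdot)$ is radially decreasing but vanishes on $\partial B_R(y)$, mass of $\mu$ near distance $2r$ from $y$ contributes little, so a crude lower bound $t\min G_R$ fails. My first attempt would be to vary $R$ and use the spherical-mean form instead. The derivative of the sphere average of $W^{\mu}$ at radius $s$ equals
\begin{equation*}
\frac{\Delta W^{\mu}(B_s(y))}{n\abs{B_1}s^{n-1}}\le\frac{\sfm(B_s)-\mu(B_s(y))}{n\abs{B_1}s^{n-1}},
\end{equation*}
and this is strictly negative for $s\in[2r,\rho)$, where $\sfm(B_\rho)=t$. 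I would combine this with the fact that for $s<2r$ only the region $\omega(\mu)\cap B_s(y)$ contributes positively, and that $\omega(\mu)$ must avoid a neighbourhood structure near $y$. The factor $2^n$ is precisely what makes $\sfm(B_{2r})<t$, so the sign turns negative already at radius $2r$. Integrating the sphere-mean derivative from $0$ to some $s\in(2r,\rho)$ should give a negative sphere average, which is the desired contradiction. If this bookkeeping on $[0,2r]$ does not close, I would instead compare $\mu$ with its swept-out version $\mu^{\epsilon}$ as in \Cref{lem:average-L-infty} and use the energy characterization \eqref{eq:minimizerJf} with a competitor built from the ball.

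\emph{Part \ref{itm:b}.} Let $x\in\supp(\mu)\subset D$. By the concentration condition, choose $r$ small with $\overline{B_{2r}(x)}\subset D$, then $r'<r$ close to $r$ with $\mu(B_{r'}(x))>2^n\sfm(B_{r})$. Set $\mu_0=\mu|_{B_{r'}(x)}$, whose support lies in $\overline{B_{r'}(x)}\subset B_r(x)$. Then \ref{itm:a} gives $x\in\overline{B_r(x)}\subset\omega(\mu_0)$. Moreover $\omega(\mu_0)\subset D$, because $\sfm(\omega(\mu_0))=\mu_0(\mR^n)$ and (using the same sphere-mean estimate) $\omega(\mu_0)$ lies within a bounded multiple of $r$ from $x$. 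Consequently the partial reduction over $D$ agrees with that over $\mR^n$, i.e.\ $\omega_D(\mu_0)=\omega(\mu_0)$. Finally, by \eqref{eq:monotonicity-in-measure-partial-reduction} with $\mu_0\le\mu$,
\begin{equation*}
x\in\omega_D(\mu_0)\subset\omega_D(\mu).
\end{equation*}
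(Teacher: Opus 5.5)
The paper does not prove this lemma; it only points to the proof of \cite[Corollary~5.2]{GS12TwoPhaseQD}. On its own terms, your part (a) has a genuine gap at exactly the step you flag, and the route you propose cannot close it. Let $M_y(s)$ be the mean of $W^{\mu}$ over $\partial B_s(y)$. Averaging over spheres or balls centred at $y$, using $W^{\mu}\ge 0$, $W^{\mu}(y)=0$ and $\Delta W^{\mu}\le \sfm-\mu$, yields only
\[
0\le M_y(s)\le \frac{s^2}{2n}-\int\bigl(\Psi(|z-y|)-\Psi(s)\bigr)^+\,\rmd\mu(z)\qquad (s>0).
\]
This sees nothing but the distribution of $|z-y|$ under $\mu$, and it is consistent with $W^{\mu}(y)=0$ for configurations covered by (a). Take $y\in\partial B_r(x)$ and $\mu=t\delta_p$, with $p\in B_r(x)$ near the point antipodal to $y$, so that $d=|p-y|\uparrow 2r$, and with $t\downarrow\sfm(B_{2r})$. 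Let $\sfm(B_\rho)=t$. For $s\le d$ the right-hand side equals $s^2/(2n)>0$. For $s\ge d$ it is minimised at $s=\rho$, with value $\frac{\rho^2}{2n}-t(\Psi(d)-\Psi(\rho))\to\frac{\rho^2}{2n}>0$. So no radius, in particular none in $(2r,\rho)$, gives a negative mean: the negative derivative on $[2r,\rho)$ cannot offset the growth on $[0,2r]$. Yet here the conclusion holds, since $\omega(\mu)=B_\rho(p)\ni y$.

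Your further claim that $\omega(\mu)$ ``must avoid a neighbourhood structure near $y$'' is unsupported; $\omega(\mu)$ can have positive density at a contact point. The energy-competitor fallback is not an argument. So (a) is unproved for general $y\in\overline{B_r(x)}$. You need information that is not radial about $y$, or the argument of the cited reference. One example: at $y\in\partial B_r(x)$ one has $y\notin\supp\mu$, so $W^\mu$ is $C^1$ near $y$ and $W^{\mu}(y)=0$ forces $\nabla W^{\mu}(y)=0$. In the plane, this identity, together with $\frac{(z-y)\cdot e}{|z-y|^2}>\frac1{2r}$ on $B_r(x)$ (where $e=(x-y)/r$) and a bathtub bound for $\Bal(\mu)\le\sfm$, already gives the constant $4$ at such $y$.

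Part (b), however, only uses $x\in\omega(\mu_0)$, i.e.\ the case $y=x$ of (a), and there your estimate does work. All of $\supp\mu_0$ lies within distance $r$ of $x$, so taking $s=2r$ gives $0\le M_x(2r)\le \frac{2r^2}{n}-t_0\bigl(\Psi(r)-\Psi(2r)\bigr)$, with $t_0=\mu_0(\mR^n)>\sfm(B_{2r})$. This is negative, because $\sfm(B_{2r})(\Psi(r)-\Psi(2r))$ equals $2r^2\ln 2$ for $n=2$ and $\frac{(2^n-4)r^2}{n(n-2)}$ for $n\ge 3$, and both exceed $\frac{2r^2}{n}$.

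What fails in (b) is the claim $\omega(\mu_0)\subset D$. The size of $\omega(\mu_0)$ is governed by the mass of $\mu_0$, not by $r$. For $\mu=c\delta_x$, $\omega(\mu_0)$ is the ball of volume $c$ for every $r$, so it is not contained in a small $D$. You do not need that claim: run the centred estimate directly on $W_D^{\mu_0}$. Since $\Bal_D(\mu_0)\le\sfm$ in $D$ and $\overline{B_{2r}(x)}\subset D$, you still have $\Delta W_D^{\mu_0}\le \sfm-\mu_0$ on $B_{2r}(x)$ and $W_D^{\mu_0}\ge 0$. The same computation then gives $W_D^{\mu_0}(x)>0$, and \eqref{eq:monotonicity-in-measure-partial-reduction} yields $x\in\omega_D(\mu_0)\subset\omega_D(\mu)$.
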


\begin{remark}\label{lem:con-mqd-pb}
We remark that there is  a direct connection between this one-phase construction and the multi-phase quadrature domains introduced above.
If $u_1,u_2,\cdots,u_m$ are the functions appearing in the definition of a strong $m$-phase quadrature domain for subharmonic functions with respect to $\mu_1,\mu_2,\cdots,\mu_m$, then it is straightforward to see that 
\begin{equation*}
u_i=W_{Q_i}^{\mu_i} \text{ in $\mR^{n}$} \quad \text{for all $i=1,\cdots,m$,}
\end{equation*}
therefore, each domain $Q_{i}$ is invariant under the mapping $Q\mapsto\omega_{Q}(\mu_{i})$, that is, 
\begin{equation}
Q_{i} = \omega_{Q_{i}}(\mu_{i}) \quad \text{for all $i=1,\cdots,m$.} \label{eq:fixed-point}
\end{equation}
To see this, we note that by definition $u_i \ge W_{Q_i}^{\mu_i}$, but also $-\Delta u_i \le -\Delta W_{Q_i}^{\mu_i}$. And since both functions are $0$ on $\partial Q_i$ they must hence be equal by the maximum principle.

\end{remark}

In view of \Cref{lem:con-mqd-pb}, we observe that \Cref{lem:average-L-infty} follows immediately from the following lemma. 

\begin{lemma}\label{lem:stability-mollification}
    If $\supp(\mu) \subset \omega_{D,\rho}(\mu)$, then $\supp(\mu^{\epsilon}) \subset \omega_{D,\rho}(\mu)=\omega_{D,\rho}(\mu^{\epsilon})$ and $V_{D,\rho}^{\mu^{\epsilon}} = V_{D,\rho}^{\mu}$ for all $\epsilon>0$ small enough.
\end{lemma}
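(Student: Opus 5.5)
The key tool is the mean value property of Newtonian potentials. Since $\Psi(|\cdot|)$ is superharmonic and harmonic away from the origin, for $\epsilon>0$ we have $U^{\mu^{\epsilon}} = U^{\mu} * \frac{1}{\abs{B_\epsilon}}\chi_{B_\epsilon} \le U^{\mu}$ everywhere. Moreover, $U^{\mu^{\epsilon}}=U^{\mu}$ outside the closed $\epsilon$-neighbourhood $K_\epsilon$ of $\supp(\mu)$, because there $U^{\mu}$ is harmonic in a ball of radius $\epsilon$ around the point. Set $K=\supp(\mu)$. By hypothesis $K$ is a compact subset of the open set $\omega:=\omega_{D,\rho}(\mu)$. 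So we may pick $\epsilon$ so small that $K_{2\epsilon}\subset\omega$; this is the meaning of ``small enough''. Then $\supp(\mu^\epsilon)\subset K_\epsilon\subset\omega$, which is the first claim. The remaining claims reduce to proving $V_{D,\rho}^{\mu^\epsilon}=V_{D,\rho}^{\mu}$. Indeed, once this holds, $\omega_{D,\rho}(\mu^\epsilon)=\{V<U^{\mu^\epsilon}\}$ agrees with $\{V<U^\mu\}$ outside $K_\epsilon$. Inside $K_\epsilon\subset\omega$ we argue via the structure instead: $W^{\mu^\epsilon}>0$ there, shown below.

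\emph{Inequality $V^{\mu^\epsilon}\le V^{\mu}$.} Write $V=V_{D,\rho}^{\mu}$. The function $V^{\mu^\epsilon}$ satisfies $-\Delta V^{\mu^\epsilon}\le\rho$ in $D$ and $V^{\mu^\epsilon}\le U^{\mu^\epsilon}\le U^\mu$. Hence it belongs to $\mF_{D,\rho}(\mu)$, and maximality gives $V^{\mu^\epsilon}\le V$.

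\emph{Inequality $V\le V^{\mu^\epsilon}$.} We must show $V\in\mF_{D,\rho}(\mu^\epsilon)$, i.e.\ $V\le U^{\mu^\epsilon}$. This is the main step. Outside $K_\epsilon$ it is immediate, since $U^{\mu^\epsilon}=U^\mu\ge V$ there. On $\omega$ we know $-\Delta V=\rho$ by the structure formula \eqref{eq:structure-partial-balayage} (note $\omega\subset D$ up to a polar set, and $\nu$ lives on $\partial D$). Hence $V$ is $C^1$-ish there; in particular $-\Delta V=\rho\ge0$ makes $V$ superharmonic in $\omega$. Consider $h:=U^{\mu^\epsilon}-V$ on the open set $G:=\operatorname{int}K_{2\epsilon}\subset\omega$. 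Its Laplacian satisfies $-\Delta h=\mu^\epsilon-\rho$, which is the wrong sign for a direct minimum principle, so a different comparison is needed.

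Instead, compare via convolution: $V*\frac{1}{\abs{B_\epsilon}}\chi_{B_\epsilon}$. Note $V=U^{\Bal(\mu)}$ and $\Bal(\mu)=\rho$ on $\omega$. Define $\tilde V:=U^{\Bal(\mu)-\mu+\mu^\epsilon}=V-U^\mu+U^{\mu^\epsilon}$. Then $\tilde V\le V\le U^\mu$, and $\tilde V\le U^{\mu^{\epsilon}}$ trivially since $V\le U^\mu$. Moreover $-\Delta\tilde V=\Bal(\mu)-\mu+\mu^\epsilon$. On $K_\epsilon$, and indeed off $\omega$ or on $\omega\setminus K$, this equals $\Bal(\mu)$ plus (terms that vanish) minus $\mu$ plus $\mu^\epsilon$. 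On $\omega$ it reads $\rho-\mu+\mu^\epsilon$, which need not be $\le\rho$. So this candidate fails too.

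The cleaner route is therefore via $W$. Let $W=U^\mu-V\ge0$, so that $-\Delta W=\mu-\rho$ on $\omega$. Put $W_\epsilon:=W-(U^\mu-U^{\mu^\epsilon})$. Then $W_\epsilon=U^{\mu^\epsilon}-V$, with $-\Delta W_\epsilon=\mu^\epsilon-\rho$ in $\omega$ and $W_\epsilon=W$ outside $K_\epsilon$. The needed claim is $W_\epsilon\ge0$ on $K_\epsilon$. Since $W$ is continuous and positive on the compact set $K_{2\epsilon_0}$, it has a positive minimum $c$ there. The difference $U^\mu-U^{\mu^\epsilon}\ge0$ tends to $0$ as $\epsilon\to0$, but not uniformly near point masses, where it can be infinite. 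So we use the minimum principle on $G=\operatorname{int}K_{2\epsilon_0}$ for the function $W_\epsilon$, which satisfies $-\Delta W_\epsilon\ge-\rho\ge-\|\rho\|_\infty$. Then $W_\epsilon+\frac{\|\rho\|_\infty}{2n}|x-x_0|^2$ is superharmonic on balls. On $\partial G$ we have $W_\epsilon=W\ge c$ once $\epsilon<\epsilon_0$, and $W_\epsilon\ge W-(U^\mu-U^{\mu^\epsilon})$ is LSC. Hence on every ball $B\subset G$ of radius $r$ with $\frac{\|\rho\|_\infty}{2n}r^2<c$, centred near $K_\epsilon$, the minimum principle yields positivity. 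This requires the boundary values of the ball to be controlled, which needs a chaining argument. Making this chaining (or an equivalent covering) rigorous is the genuine obstacle. The fallback is to choose $\epsilon_0$ so that $\|\rho\|_\infty(2\epsilon_0)^2<2nc$ after fixing $c$ on a fixed neighbourhood $K_{\delta}\subset\omega$, and then to apply the minimum principle on $G=\operatorname{int}K_\delta$ with the quadratic correction centred at arbitrary points.

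With $W_\epsilon\ge0$ in hand, $V\le U^{\mu^\epsilon}$ holds everywhere, so $V=V^{\mu^\epsilon}$. Finally, $\omega_{D,\rho}(\mu^\epsilon)=\{W_\epsilon>0\}$ coincides with $\omega$: outside $K_\epsilon$ we have $W_\epsilon=W$, and inside $K_\epsilon$ both $W_\epsilon>0$ (strict, by the same minimum principle with margin) and $K_\epsilon\subset\omega$.
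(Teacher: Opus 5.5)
There is a genuine gap. Your reduction is sound: $V_{D,\rho}^{\mu^{\epsilon}}\le V_{D,\rho}^{\mu}$ follows from maximality, and everything else hinges on showing $W_\epsilon:=U^{\mu^{\epsilon}}-V_{D,\rho}^{\mu}>0$ on $K_\epsilon$. That step is never established, and the minimum-principle route you propose cannot establish it.

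First, there is a sign error. Since $\Delta|x-x_0|^2=2n$, adding $\frac{\|\rho\|_\infty}{2n}|x-x_0|^2$ to $W_\epsilon$ lowers $-\Delta$ by $\|\rho\|_\infty$. You must subtract it to get a superharmonic function, and then the minimum principle on a region $G$ only yields $W_\epsilon(x_0)\ge c-\frac{\|\rho\|_\infty}{2n}\sup_{y\in\partial G}|y-x_0|^2$. For $G=\mathrm{int}\,K_\delta$ this supremum is at least $\frac14\mathrm{diam}(\supp(\mu))^2$ for suitable $x_0$. That quantity has nothing to do with $\epsilon_0$, so your fallback condition $\|\rho\|_\infty(2\epsilon_0)^2<2nc$ achieves nothing.

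More fundamentally, the only information you use ($-\Delta W_\epsilon\ge-\rho$ in $G$ and $W_\epsilon\ge c$ on $\partial G$) is compatible with negativity. For constant $\rho=\rho_0$, the function $c-\frac{\rho_0}{2n}(R^2-|x|^2)$ on $B_R$ satisfies both conditions and is negative at the centre when $R$ is large. Any chaining over small balls would need lower bounds for $W_\epsilon$ on spheres inside $K_\epsilon$, which is exactly what is to be proved.

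The missing idea is monotonicity plus compactness, which is what the paper uses. Ball means of the superharmonic function $U^{\mu}$ decrease with the radius, so $U^{\mu^{\epsilon}}\nearrow U^{\mu}$ pointwise as $\epsilon\searrow 0$, including where $U^{\mu}=+\infty$. Since $\mu^{\epsilon}$ has bounded density, $U^{\mu^{\epsilon}}$ is continuous, and so is $V_{D,\rho}^{\mu}$. Hence the sets $\{U^{\mu^{\epsilon}}>V_{D,\rho}^{\mu}\}$ are open and increase as $\epsilon\searrow 0$. Their union contains the compact set $K_\delta\subset\omega_{D,\rho}(\mu)$, because $U^{\mu}>V_{D,\rho}^{\mu}$ there.

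So a single $\epsilon_1$ already gives $W_{\epsilon_1}>0$ on $K_\delta$. By monotonicity, $W_\epsilon>0$ on $K_\delta\supset K_\epsilon$ for every $\epsilon<\min(\epsilon_1,\delta)$. Together with $W_\epsilon=W$ wherever ${\rm dist}(\cdot,\supp(\mu))\ge\epsilon$, this completes your argument, including $\omega_{D,\rho}(\mu^{\epsilon})=\omega_{D,\rho}(\mu)$.

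Your worry about non-uniform convergence near point masses is beside the point. Only eventual positivity on a compact set is needed, and that is a Dini-type statement requiring no uniformity.
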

\begin{proof}
First of all it is immediate that $\supp(\mu^{\epsilon}) \subset \omega_{D,\rho}(\mu)$ as long as $\epsilon<{\rm dist}(x,\partial \omega_{D,\rho}(\mu))$, and below we assume that $\epsilon$ always satisfies this.
    By the mean-value inequality for superharmonic functions, it is well known that $U^{\mu^{\epsilon}} \nearrow U^{\mu}$ as $\epsilon \searrow 0$, and furthermore we have equality $U^{\mu^{\epsilon}} (x) = U^{\mu}(x)$ for all $x$ such that ${\rm dist}(x,\supp(\mu))>\epsilon$. By assumption $V_{D,\rho}^{\mu} < U^{\mu}$ in a neighborhood of $\supp(\mu)$, and therefore it is easy to see that we may choose $\epsilon$ small enough such that $V_{D,\rho}^{\mu} < U^{\mu^{\epsilon}}$ in a neighborhood of $\supp(\mu^{\epsilon})$ as-well. But then it is more or less immediate by definition that $V_{D,\rho}^{\mu^{\epsilon}}=V_{D,\rho}^{\mu}$ for such $\epsilon$, and that $W_{D,\rho}^{\mu^\epsilon} =U^{\mu^{\epsilon}}-V_{D,\rho}^{\mu}$. Furthermore, $W_{D,\rho}^{\mu^\epsilon}$ is positive on $\omega_{D,\rho}(\mu)$, so it follows that $\omega_{D,\rho}(\mu^{\epsilon}) = \omega_{D,\rho}(\mu)$.
\end{proof}

\section{A minimization problem} 
The goal of this section is to further develop the energy minimization approach of \cite{AS16MultiPhaseQD} and then use this to prove our existence and uniqueness results. As we noted above for the one-phase situation, as long as we have measures with $L^{\infty}$ densities, then we could use either the partial balayage method or the energy minimization method. 
For the case of two phases first studied in \cite{EPS11TwoPhaseQD} the authors used the method of energy minimization, and then in \cite{GS12TwoPhaseQD} a type of two-phase partial balayage method was introduced. A particular merit of the latter was that it could be used to prove uniqueness and give sufficient conditions to guarantee existence of two-phase quadrature domains, which are the main aims of this article to get in the multi-phase case. We start this section with an example, based on the method of two-phase partial balayage, which in particular indicates why we think that an approach to develop an $m$-phase partial balayage along similar lines as the two-phase version seems out of reach.

\begin{example}\label{ex:tpqd-nonmin}
In the definition of two-phase partial balayage, it is crucial that a certain monotonicity property holds when passing from the one-phase to the two-phase setting. 
For each signed measure $\mu$, we denote by $\overline{W}^{\mu}$ the smallest function in the class 
\begin{equation}
\tau _{\mu }:=\{w:w\text{ is }\delta \text{-subharmonic, }-\Delta w\geq \eta
(w,\mu )\text{ and }w\geq -W^{\mu ^{-}}\text{ on }\mathbb{R}^{n}\}, \label{eq:def-2-phase-balayage}
\end{equation}
where 
\begin{equation*}
\eta (w,\mu )=\left( (\mu ^{+}-\sfm )^{+}-(\mu ^{+}-\sfm
)^{-}|_{\{w>0\}}\right) -\left( (\mu ^{-}-\sfm )^{+}-(\mu ^{-}-\sfm
)^{-}|_{\{w<0\}}\right) .
\end{equation*} 
In particular, the condition $w \ge -W^{\mu^-}$ serves as a penalization constraint for functions $w \in \tau_\mu$. For later reference, we denote 
\begin{equation*}
\Omega^+(\mu) = \{\overline{W}^\mu >0\} ,\quad \Omega^-(\mu) = \{\overline{W}^\mu < 0\}.
\end{equation*}
Note in particular that the penalization implies that $\Omega^-(\mu) \subset \omega(\mu^-)$.

Let us consider the case of three point masses in the plane:
$$\mu_1=\frac{4\pi}{9}\delta_{(-1,0)}, \quad \mu_2=\frac{4\pi}{9}\delta_{(0,0)}, \quad \mu_3=\frac{4\pi}{9}\delta_{(1,0)}.$$
The one-phase partial balayage of each of these onto $\rho\equiv 1$ will be a ball of radius $r=2/3$ centered at the respective points $(-1,0),(0,0)$ and $(1,0)$. 
Let us now introduce the measure 
\begin{equation*}
\nu=\mu_1+\mu_3-\mu_2. 
\end{equation*} 
According to \cite[Corollary~5.4]{GS12TwoPhaseQD}, the two-phase partial balayage of $\nu$ is well defined and produces a two-phase quadrature domain $\left(\Omega^+(\nu),\Omega^-(\nu)\right)$. 
The set $\Omega^+(\nu)$ splits into two disjoint ``ball-like'' components $D_1$ and $D_3$, each surrounding the support of the measures $\mu_1$ and $\mu_3$, respectively, while the set $\Omega^-(\nu)$ consists of a single component $D_{2}$. It is then straightforward to verify that the triple $(D_{1},D_{2},D_{3})$ forms a three-phase quadrature domain for the measures $\mu_{1},\mu_{2},\mu_{3}$, taking advantage of the fact that the one-phase quadrature domains of $\mu_{1}$ and $\mu_{3}$ are disjoint. 

The construction of two-phase partial balayage, which is used above, works because the transition from one phase to two phases is monotonic in the sense that $\Omega^+(\mu) \subset \omega(\mu^+)$ and $\Omega^{-}(\mu) \subset \omega(\mu^-)$, which made it possible to use the constraint in the definition of the class $\tau_{\mu}$ above. The same property holds when passing from one to multiple phases (see inequality \eqref{eq:monotonicity-support} below) but we will now explain that this does not hold in the same way when passing from two to three phases, so it would not be possible to pass from two to three phases in such a way that we first construct a two-phase quadrature domain $Q_1,Q_2$ with respect to $\mu_1,\mu_2$, and then try to construct a three phase quadrature domain $D_1,D_2,D_3$ for $\mu_1,\mu_2,\mu_3$ with any construction which forces $D_1 \subset Q_1$ and $D_2 \subset Q_2$.

Using \cite[Lemma~4.1(c), Theorem~4.3(a)]{GS12TwoPhaseQD} and the fact that $\mu\le\nu$, it is straightforward to see that $\tau_{\nu}\subset\tau_{\mu}$. This immediately implies $\overline{W}^\nu \geq \overline{W}^\mu$, and consequently, 
$$\Omega^+(\mu) \subset \Omega^+(\nu) \quad \text{and} \quad \Omega^-(\nu) \subset \Omega^-(\mu).$$
In other words, the set where the function is positive becomes larger, while the set where it is negative becomes smaller. This relation also holds at the level of individual components: the first component $D_1$ above must be larger than $\Omega^+(\mu)$, and in fact strictly larger, as follows from Hopf's lemma. On the other hand, $D_2$ will be strictly contained within $\Omega^-(\mu)$. This illustrates that monotonicity fails when moving from two to three phases, making it difficult to impose such constraints iteratively on the admissible functions in a natural way. 
\end{example}

One may naturally ask to what extent the energy minimization method and the two-phase partial balayage method are equivalent, as they are in the one-phase case. This question is especially relevant because the energy minimization framework can be extended naturally to the multiphase setting \cite{AS16MultiPhaseQD,KS24MultiplaseQD}. Unfortunately, this is not the case, as we explain at the end of this section in \Cref{ex:counterexample-tpqd-v3}.

Let $\mu_1,\mu_2,\cdots,\mu_m$ and $\rho_1,\rho_2,\cdots,\rho_m$ be non-negative functions in $L^{\infty}(\mR^n)$ such that, for each $j=1,2,\cdots,m$, $\mu_j$ has compact support and  $\rho_j>c$ outside a compact set for some constant $c>0$ . Furthermore define 
\begin{equation}\label{eq:assumption-A-rho}
f_j=\mu_j-\rho_j \textrm{ for each  } j=1,2,\cdots,m,
\end{equation}
and
\[\bff=(f_{1},\cdots,f_{m}).\]
Note  that
\begin{equation}
\text{$f_{j}\le {\rm constant} < 0$ holds outside a compact set}, \label{eq:assumption-A} 
\end{equation}
which in particular implies that $\supp\,(f_{j}^{+})$ is compact (and vice versa any $f_j \in L^{\infty}(\mR^n)$ satisfying this can be written on the above form).
The case relevant for $m$-phase quadrature domains is when each $\rho_j=1$, so that 
\begin{equation}
f_{j} = \mu_{j} - 1 \quad \text{for all $j=1,\cdots,m$.} \label{eq:special-choice-f}
\end{equation}
The results up to \Cref{cor:cont-min-Sm} below will be proved in the more general setting, but after this we assume that $\rho_j=1$ unless otherwise stated.

We now introduce the functional
\begin{equation}
\mJ_{\bff}(\bfu) := \sum_{i=1}^{m} J_{f_{i}}(u_{i}) = \sum_{i=1}^{m} \int_{\mR^{n}} \left( \abs{\nabla u_{i}}^{2} - 2f_{i}u_{i} \right) \,\rmd \sfm \label{eq:functional-J-definition}
\end{equation}
for $\bfu = (u_{1},\cdots,u_{m})\in(H^{1}(\mR^{n}))^{m}$. 
Define 
\begin{equation*}
\begin{aligned}
\mK^{m} &:= \left\{ \bfu \in (H^{1}(\mR^{n}))^{m} : \text{$u_{i}\ge 0$ for all $i=1,\cdots,m$} \right\}, \\ 
\mS_{m} &:= \left\{ \bfu \in \mK^{m} : \text{$u_{i}\cdot u_{j}=0$ for all $i\neq j$} \right\}. 
\end{aligned}
\end{equation*}
Following \cite{CTV05SegregationProblem}, we refer to the elements of $\mS_{m}$ as \emph{segregated states}, where
a state $\bfu \in (H^{1}(\Omega))^{m}$ is called \emph{segregated} if $u_{i}\cdot u_{j}=0$ for all $i\neq j$.

It should be pointed out, that although we use some terminology and ideas from \cite{CTV05SegregationProblem}, our problem is fundamentally different and does not fall under the scope of that paper, where we need other arguments and ideas. 

\begin{remark}\label{rem:quasicont-rep}
Given a segregated state $(u_1,u_2,\ldots,u_m) \in \mS_m$, the sets $Q_j=\{u_j>0\}$ will be of fundamental importance for us. As an element in $L^{\infty}$ the functions $\chi_{\{u_j>0\}}$ are well defined, since two representatives are equal almost everywhere (we will use the abbreviation ``a.e.'' in what follows), but the sets $\{u_j>0\}$ of-course depend on the specific choice of representatives of the functions $u_1,u_2,\ldots,u_m$. The condition $u_{\ell} \cdot u_j=0$ if $\ell \ne j$ means that this holds a.e. for any choice of representatives. We may always choose quasicontinuous\footnote{This means that the pre-image of every open set is quasi-open.} (and everywhere non-negative) representatives, and then if $j \ne \ell$, $u_j=0$ actually holds quasi-everywhere (we will use the abbreviation ``q.e.'' in what follows, see, e.g., \cite[Definition~5.1.10]{AG01Potential} for a precise definition) on $\overline{\{u_{\ell}>0\}}$.
This follows because $Q_{\ell}=\{u_{\ell}>0\}$ is quasi-open\footnote{This means that the set is contained in the closure of its interior.}, and we have $u_j=0$ q.e. in this set. Furthermore, the set of points in $\overline{Q_{\ell}}$ where $Q_{\ell}$ is thin has capacity zero, and it is easy to see that if there were a set of positive capacity of $\partial Q_{\ell}$ where $u_j>0$, then this contradicts that $u_j$ is quasicontinuous. We may therefore also, if necessary, redefine $u_j$ to be zero on all of $\overline{\{u_{\ell}>0\}}$ without breaking the quasicontinuity.
 \end{remark}

If we for given non-negative functions $\mu_i,\rho_i$ in $L^{\infty}(\mR^n)$, where $\mu_i$ has compact support and  $\rho_i>\textrm{constant}>0$ holds outside a compact set, define 
\begin{equation}
f_i=\mu_i-\rho_i,
\end{equation}
then  \eqref{eq:assumption-A} above is satisfied. Conversely, if \eqref{eq:assumption-A} is satisfied, then $f_i$ may always be written on the form \eqref{eq:assumption-A-rho}.  This is the case, with $\rho_i=1$, which is relevant for quadrature domains.

From the results in the previous section we know that, in case (\ref{eq:assumption-A-rho}) is satisfied, $v_{*,i}=W_{\rho_i}^{\mu_i}$ is the unique minimizer of $J_{f_i}$ over $\mK$, and since minimizing $\mJ_{\bff}$ over $\mK^m$ simply means that we minimize each $J_{f_i}$ over $\mK$ separately, we see in particular that  $\bfv_{*}=(W_{\rho_1}^{\mu_1},W_{\rho_2}^{\mu_2},\cdots,W_{\rho_m}^{\mu_m})$ is the unique minimizer of $\mJ_{\bff}$ over $\mK^{m}$, so we get the following result. 
\begin{equation}
\text{There is a unique minimizer $\bfv_{*}$ of $\mJ_{\bff}$ over $\mK^m$, and it has compact support.} \label{eq:boundedness}
\end{equation}

We now consider the minimization problem
\begin{equation*}
\text{minimize $\mJ_{\bff}(\bfu)$ over the segregated states $\bfu\in\mS_{m}$.}
\end{equation*}
By \cite[Theorem~1]{AS16MultiPhaseQD}, if \eqref{eq:assumption-A} holds, there is at least one minimizer $\bfu_{*}$ of $\mJ_{\bff}$ in $\mS_{m}$, and all minimizers have compact support. Moreover, for each minimizer $\bfu_{*}$, 
\begin{equation}
u_{*,i} \le v_{*,i} \text{ in $\mR^{n}$} \quad \text{for all $i=1,\cdots,m$} \label{eq:monotonicity-support}
\end{equation}
where $\bfv_{*}=(v_{*,1},\cdots,v_{*,m})$ is the unique minimizer of $\mJ_{\bff}$ over $\mK^{m}$. 
We now present a refinement of \cite[Proposition~1]{AS16MultiPhaseQD} in the following lemma. 

\begin{lemma}\label{lem:improvement-prop1-AS16}
Let $\bff=(f_{1},\cdots,f_{m})$ where each function $f_{1},\cdots,f_{m} \in L^{\infty}(\mR^{n})$ satisfies \eqref{eq:assumption-A}. If $\bfu_{*}=(u_{*,1},\cdots,u_{*,m})$ is a minimizer of $\mJ_{\bff}$ over $\mS_{m}$, then for each $j=1,\cdots,m$ we have 
\begin{equation} 
-\Delta\left(\sum_{\ell\neq j}u_{*,\ell}-u_{*,j}\right) \le \sum_{\ell \neq j}f_{\ell}\chi_{\{u_{*,\ell}>0\}} - f_{j}\chi_{\{u_{*,j}>0\}} \quad \text{in $\mR^{n}$.} \label{eq:Euler-Lagrange-general}
\end{equation}
\end{lemma}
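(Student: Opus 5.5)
Fix $j$ and write $w=\sum_{\ell\ne j}u_{*,\ell}-u_{*,j}$. The plan is a one-sided variation argument that keeps us inside $\mS_m$. For a test function $\varphi\in C_c^\infty(\mR^n)$ with $\varphi\ge 0$ and small $t>0$, set $\tilde w = w - t\varphi$. From it build the competitor
\[
\tilde u_j=(\tilde w)^-,\qquad \tilde u_\ell=(\tilde w)^+\chi_{\{\sum_{k\ne j}u_{*,k}>0\}\cup\ldots},
\]
that is, distribute $(\tilde w)^+$ among the phases $\ell\ne j$ by keeping it in the phase it came from. Pushing $w$ down enlarges the $j$-phase and shrinks the others, so the positive part never invades a region that was previously zero for all $\ell\ne j$. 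Concretely, I would take $\tilde u_\ell=(u_{*,\ell}-t\varphi)^+$ for $\ell\ne j$ and $\tilde u_j=u_{*,j}+t\varphi\,\chi$ suitably corrected. The cleanest version is
\[
\tilde u_\ell=\bigl(u_{*,\ell}-t\varphi\bigr)^+\ (\ell\ne j),\qquad \tilde u_j=\Bigl(u_{*,j}+t\varphi-\sum_{\ell\ne j}u_{*,\ell}\Bigr)^+ ,
\]
which is segregated: where some $u_{*,\ell}>0$ we have $u_{*,j}=0$, and $\tilde u_j>0$ forces $u_{*,\ell}<t\varphi$, so $\tilde u_\ell=0$. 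Moreover $\sum_{\ell\ne j}\tilde u_\ell-\tilde u_j=w-t\varphi$ a.e., so $\nabla$ of the new signed combination is $\nabla w-t\nabla\varphi$.

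Next compare energies. Since the supports are disjoint, the Dirichlet part of $\mJ_\bff$ equals $\int|\nabla \tilde w|^2$. This is the key point: the gradient term is a quadratic functional of the single function $w$. It gives $\int|\nabla w|^2-2t\int\nabla w\cdot\nabla\varphi+O(t^2)$. The linear part is $-2\int\bigl(\sum_{\ell\ne j}f_\ell\tilde u_\ell+f_j\tilde u_j\bigr)$. Using $\tilde u_\ell-u_{*,\ell}=-t\varphi$ on $\{u_{*,\ell}>t\varphi\}$ and $|\tilde u_\ell-u_{*,\ell}|\le t\varphi$ everywhere, the change is
\[
2t\int\varphi\Bigl(\sum_{\ell\ne j}f_\ell\chi_{\{u_{*,\ell}>0\}}-f_j\chi_{\{u_{*,j}>0\}}\Bigr)+o(t)
\]
plus a boundary error of the form $2\int_{\{0<u_{*,\ell}\le t\varphi\}\cup\{0\le \dots\}}|f|\,t\varphi$. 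This error is $o(t)$ by dominated convergence, since these sets shrink to null sets.

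One more care is needed. On $\{w=0\}$ the new $\tilde u_j$ equals $t\varphi$, contributing $-2t\int_{\{w=0\}}f_j\varphi$ rather than $0$. Here the sign of $f_j$ is unknown, which would break the desired inequality. I expect this to be the main obstacle. To fix it I would first note that $\nabla w=0$ a.e. on $\{w=0\}$, and then choose the competitor so that no mass is put there. Instead of adding $t\varphi$ on the zero set, replace $\varphi$ by $\varphi\,\chi_{\{w\neq 0\}}$ smoothed. The cleanest route is to take $\tilde u_j=(u_{*,j}+t\varphi)\chi_{\{u_{*,j}>0\}}$-type truncations $\min$-corrected, i.e.\ $\tilde u_j=(\tilde w)^-\wedge (u_{*,j}+t\varphi)\,\mathbf 1$, and accept that only then $\nabla$-identity holds up to errors supported on $\{|w|\le t\varphi\}\setminus\{w=0\}$. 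Those errors are again $o(t)$ because $\int_{\{0<|w|\le t\varphi\}}|\nabla w|^2\to 0$.

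Minimality gives $\mJ_\bff(\tilde{\bfu})-\mJ_\bff(\bfu_*)\ge 0$. Dividing by $t$ and letting $t\to0_+$ yields
\[
\int\nabla w\cdot\nabla\varphi\le\int\varphi\Bigl(\sum_{\ell\ne j}f_\ell\chi_{\{u_{*,\ell}>0\}}-f_j\chi_{\{u_{*,j}>0\}}\Bigr)
\]
for all $\varphi\ge0$, which is \eqref{eq:Euler-Lagrange-general} in the distributional sense. The only delicate step is the treatment of the zero set $\{w=0\}$ (in particular, of $\{u_{*,j}=0\}$ where $\tilde u_j$ becomes positive). My first attempt would be to verify directly that the contribution there has the correct sign or vanishes. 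If not, I would use a cutoff $\varphi\eta_\delta(w)$ with $\eta_\delta\to\chi_{\{w\ne0\}}$ and pass $\delta\to0$.
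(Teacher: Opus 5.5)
\textbf{Verdict: there is a genuine gap.} The step you flag as the main obstacle, the term coming from the zero set, is left unresolved, and none of your proposed fixes closes it.

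\textbf{What matches the paper.} Your competitor is the paper's: by segregation, $(u_{*,\ell}-u_{*,j}-\epsilon\psi)^{+}=(u_{*,\ell}-\epsilon\psi)^{+}$. Your remark that the Dirichlet part is exactly $\int\abs{\nabla(w-t\varphi)}^{2}$ is a tidy way to obtain the paper's $I_1$. You also locate the difficulty correctly: the first variation only yields
\begin{equation*}
-\Delta w\le F-f_j\chi_Z,\qquad F:=\sum_{\ell\ne j}f_\ell\chi_{\{u_{*,\ell}>0\}}-f_j\chi_{\{u_{*,j}>0\}},\qquad Z:=\{w=0\}=\bigcap_{\ell}\{u_{*,\ell}=0\}.
\end{equation*}

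\textbf{Why the fixes fail.}
\begin{enumerate}
\item Sign check: for $f_j=\mu_j-1$ one has $f_j=-1$ on $Z\setminus\supp(\mu_j)$. The extra term is therefore $+\chi_Z$ there, which weakens the inequality.
\item Modified competitor: $(\tilde w)^-\le u_{*,j}+t\varphi$ everywhere, so your minimum is just $(\tilde w)^-$ again. The alternative $(u_{*,j}+t\varphi)\chi_{\{u_{*,j}>0\}}$ jumps by $t\varphi$ across $\partial\{u_{*,j}>0\}$, so it is not in $H^1$. Any admissible variation that raises $u_{*,j}$ up to its free boundary must be positive on part of $Z$. Making that contribution $o(t)$ would need thin transition layers plus information such as $\sfm(\partial\{w\ne0\})=0$, which is not available at this stage.
\item Cutoff $\psi_\delta=\varphi\,\eta_\delta(w)$: this is vacuous. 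It vanishes on $\{w=0\}$, hence on the free boundary, which is exactly where the content of the lemma lives. With, e.g., $\eta_\delta(s)=\min\{\abs{s}/\delta,1\}$, the quantity $\int\nabla w\cdot\nabla\psi_\delta$ contains $-\delta^{-1}\int_{\{0<u_{*,j}<\delta\}}\varphi\abs{\nabla u_{*,j}}^{2}$ on the wrong side. This term need not tend to $0$; it captures the part of $\Delta u_{*,j}$ on $\partial\{u_{*,j}>0\}$, which is precisely what must be controlled.
\end{enumerate}
A concrete case shows the cutoff route cannot work. Take $m=1$ and $f_1>0$ in a ball $B$, and let $u$ solve $-\Delta u=f_1$ in $B$ with $u=0$ on $\partial B$, extended by $0$. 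Then $u$ satisfies every inequality tested against such $\psi_\delta\in H^1_0(B)$, with equality. Yet it violates the lemma, since by Hopf's lemma $\Delta u$ has the positive part $\abs{\partial_\nu u}\,\mathcal{H}^{n-1}|_{\partial B}$.

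\textbf{The missing idea.} Keep the error term and discard it \emph{a posteriori}, as the paper does.
\begin{enumerate}
\item Set $G:=F-f_j\chi_Z\in L^\infty$. The inequality $-\Delta w\le G$ shows that $\lambda:=-\Delta w$ is a signed Radon measure; the paper writes $w=U^{G}-U^{\eta}$ with $\eta\ge0$.
\item On the level set $Z=\{w=0\}$ the absolutely continuous part of $\lambda$ vanishes a.e. This is a Stampacchia-type property, the second-order analogue of $\nabla w=0$ a.e.\ on $\{w=0\}$. Hence $\lambda|_Z$ is singular with respect to $\sfm$.
\item Since $\lambda|_Z$ is bounded above by the absolutely continuous measure $-f_j\chi_Z\,\sfm$, it must satisfy $\lambda|_Z\le0=F\chi_Z$.
\item Off $Z$ the error term is absent, so $\lambda\le F$ in all of $\mR^{n}$.
\end{enumerate}
In short, the zero set has to be handled at the level of the measure $\Delta w$, not by changing the test function.
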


\begin{proof}[Proof of \Cref{lem:improvement-prop1-AS16}]
Let $\psi\in C_{c}^{\infty}(\mR^{n})$ be non-negative. For $\epsilon>0$, define $\bfz=(z_{1},\cdots,z_{m})$ by 
\begin{equation*}
\left\{\begin{aligned}
z_{j} & := \left( \sum_{\ell\neq j}u_{*,\ell} - u_{*,j} - \epsilon\psi \right)^{-}, \\ 
z_{\ell} & := (u_{*,\ell} - u_{*,j} - \epsilon\psi)^{+} && \text{for all $\ell\neq j$.}
\end{aligned}\right.
\end{equation*}
Observe that 
\begin{equation*}
\Omega_{\epsilon} := \{z_{j}>0\}  =  \left\{ \sum_{\ell\neq j}u_{*,\ell} - u_{*,j} - \epsilon\psi < 0 \right\} \subset \bigcap_{\ell\neq j} \{z_{\ell}=0\}, 
\end{equation*}
$\mR^{n}\setminus\Omega_{\epsilon}\subset\{u_{*,j}=0\}$ and $0 \le \mJ_{\bff}(\bfz) - \mJ_{\bff}(\bfu_{*}) = I_{1}+I_{2}$ with 
\begin{equation*}
\begin{aligned}
I_{1} &= \sum_{\ell=1}^{m}\int_{\mR^{n}}(\abs{\nabla z_{\ell}}^{2} - \abs{\nabla u_{*,\ell}}^{2})\,\rmd \sfm \\ 
I_{2} &= -\sum_{\ell=1}^{m} \int_{\mR^{n}} 2f_{\ell}(z_{\ell}-u_{*,\ell})\,\rmd \sfm. 
\end{aligned}
\end{equation*}
We begin by computing $I_{1}$. Since $u_{*,\ell}\cdot u_{*,j}=0$ for all $\ell\neq j$, then 
\begin{equation*}
\begin{aligned}
I_{1} &= - \sum_{\ell\neq j} \int_{\Omega_{\epsilon}} \abs{\nabla u_{*,\ell}}^{2}\,\rmd \sfm + \int_{\Omega_{\epsilon}}\left( \left|\nabla\left(u_{*,j}+\epsilon\psi-\sum_{\ell\neq j}u_{*,\ell}\right)\right|^{2} - \abs{\nabla u_{*,j}}^{2} \right) \,\rmd \sfm \\ 
& \quad + \sum_{\ell\neq j} \int_{\mR^{n}\setminus \Omega_{\epsilon}}(\abs{\nabla(u_{*,\ell}-u_{*,j}-\epsilon\psi)}^{2} - \abs{\nabla u_{*,\ell}}^{2})\,\rmd \sfm - \overbrace{\int_{\mR^{n}\setminus \Omega_{\epsilon}} \abs{\nabla u_{*,j}}^{2}\,\rmd \sfm}^{=\,0} \\ 
& = - \sum_{\ell\neq j} \int_{\Omega_{\epsilon}} \abs{\nabla u_{*,\ell}}^{2}\,\rmd \sfm + 2\epsilon \overbrace{\int_{\Omega_{\epsilon}} \nabla\psi\cdot\nabla u_{*,j} \,\rmd \sfm}^{\int_{\mR^{n}} \nabla\psi\cdot\nabla u_{*,j} \,\rmd \sfm} + \int_{\Omega_{\epsilon}} \left|\nabla\left(\epsilon\psi-\sum_{\ell\neq j}u_{*,\ell}\right)\right|^{2}\,\rmd \sfm \\ 
& \quad - 2\epsilon \int_{\mR^{n}\setminus\Omega_{\epsilon}} \nabla\psi\cdot\nabla \left(\sum_{\ell\neq j}u_{*,\ell}\right)\,\rmd \sfm +  \overbrace{\sum_{\ell\neq j}\int_{\mR^{n}\setminus\Omega_{\epsilon}} \abs{\nabla(u_{*,j}+\epsilon\psi)}^{2} \,\rmd \sfm}^{=\,(m-1)\epsilon^{2}\int_{\mR^{n}\setminus\Omega_{\epsilon}} |\nabla \psi|^{2}\,\rmd \sfm} \\ 
& = 2\epsilon \int_{\mR^{n}} \nabla\psi\cdot\nabla u_{*,j} \,\rmd \sfm -2\epsilon\int_{\mR^{n}}\nabla\psi\cdot\nabla\left(\sum_{\ell\neq j}u_{*,\ell}\right)\,\rmd \sfm \\ 
& \quad + \epsilon^{2}\int_{\Omega_{\epsilon}}\abs{\nabla\psi}^{2}\,\rmd \sfm + (m-1)\epsilon^{2}\int_{\mR^{n}\setminus\Omega_{\epsilon}}\abs{\nabla\psi}^{2}\,\rmd \sfm. 
\end{aligned}
\end{equation*}
We proceed to compute $I_{2}$. Since $u_{*,\ell}\cdot u_{*,j}=0$ for all $\ell\neq j$, then 
\begin{equation*}
z_{\ell}-u_{*,\ell} = \left\{\begin{aligned}
& -\epsilon\psi && \text{if $u_{*,\ell}\ge\epsilon\psi$} \\ 
& -u_{*,\ell} && \text{if $u_{*,\ell}<\epsilon\psi$} 
\end{aligned}\right. \quad \text{for all $\ell\neq j$}
\end{equation*}
and 
\begin{equation*}
z_{j}-u_{*,j}=\left\{\begin{aligned}
& \epsilon\psi && \text{if $u_{*,j}>0$,} \\ 
& \epsilon\psi && \text{if $u_{*,j}=0$ and $\sum_{\ell\neq j}u_{*,\ell}=0$,} \\ 
& \epsilon\psi - \sum_{\ell\neq j}u_{*,\ell} && \text{if $0<\sum_{\ell\neq j}u_{*,\ell}<\epsilon\psi$,} \\ 
& 0 && \text{if $\sum_{\ell\neq j}u_{*,\ell}\ge\epsilon\psi$.}
\end{aligned}\right. 
\end{equation*}
Hence, we obtain 
\begin{equation*}
\begin{aligned}
I_{2} &= -2\sum_{\ell\neq j}\int_{\mR^{n}}f_{\ell}(z_{\ell}-u_{*,\ell})\,\rmd \sfm - 2\int_{\mR^{n}} f_{j}(z_{j}-u_{*,j})\,\rmd \sfm \\ 
& = 2\epsilon\sum_{\ell\neq j}\int_{\{u_{*,\ell}\ge\epsilon\psi\}}f_{\ell}\psi\,\rmd \sfm + 2\epsilon\sum_{\ell\neq j}\int_{\{u_{*,\ell}<\epsilon\psi\}}f_{\ell}u_{*,\ell}\,\rmd \sfm \\ 
& \quad -2\epsilon\int_{\{u_{*,j}>0\}}f_{j}\psi\,\rmd \sfm - 2\epsilon\int_{\{u_{*,j}=\sum_{\ell\neq j}u_{*,\ell}=0\}}f_{j}\psi\,\rmd \sfm - 2\epsilon\int_{\{0<\sum_{\ell\neq j}u_{*,\ell}<\epsilon\psi\}}f_{j}\psi\,\rmd \sfm \\ 
& \quad + 2\int_{\{0<\sum_{\ell\neq j}u_{*,\ell}<\epsilon\psi\}}f_{j}\sum_{i\neq j}u_{*,\ell}\,\rmd \sfm .
\end{aligned}
\end{equation*}
Consequently, we have 
\begin{equation*}
\begin{aligned}
0 &\le \limsup_{\epsilon\rightarrow 0_{+}}\frac{1}{2\epsilon}(I_{1}+I_{2}) \\ 
&\le \int_{\mR^{n}} \nabla\left( u_{*,j}-\sum_{\ell\neq j}u_{*,\ell}\right)\cdot\nabla\psi\,\rmd \sfm \\
& \quad + \sum_{\ell\neq j}\int_{\{u_{*,\ell}>0\}}f_{\ell}\psi\,\rmd \sfm - \int_{\{u_{*,j}>0\}}f_{j}\psi\,\rmd \sfm - \int_{\cap_{\ell}\{u_{*,\ell}=0\}} f_{j}\psi\,\rmd \sfm.
\end{aligned}
\end{equation*}
Since $\psi$ was arbitrary, we now obtain, in the sense of distributions, 
\begin{equation*}
-\Delta\left(\sum_{\ell\neq j}u_{*,\ell}-u_{*,j}\right) \le \sum_{\ell\neq j}f_{\ell}\chi_{\{u_{*,\ell}>0\}} - f_{j}\chi_{\{u_{*,j}>0\}} - f_{j}\chi_{\cap_{\ell}\{u_{*,\ell}=0\}} \quad \text{in $\mR^{n}$.} 
\end{equation*}
Note that this, together with \eqref{eq:boundedness}, implies that 
$-\Delta\left(\sum_{\ell\neq j}u_{*,\ell}-u_{*,j}\right) \le F$, where $F \in L^{\infty}(\mathbb{R}^n)$ and has compact support. Hence, $\sum_{\ell\neq j}u_{*,\ell}-u_{*,j}-U^F$ is subharmonic, which in turn implies that $\sum_{\ell\neq j}u_{*,\ell}-u_{*,j}=U^F-U^{\eta}$ for some $\eta \ge 0$.
It follows, in particular, that the left-hand side in \eqref{eq:Euler-Lagrange-general} is a measure, and it is moreover necessarily singular with respect to the Lebesgue measure on the set $\bigcap_{\ell}\{u_{*,\ell}=0\}$ (since the harmonic measure for any open set satisfies this). Hence the estimate \eqref{eq:Euler-Lagrange-general} follows. 
\end{proof}

\begin{corollary}\label{cor:cont-min-Sm}
    Let $\bff=(f_{1},\cdots,f_{m})$ where each function $f_{1},\cdots,f_{m} \in L^{\infty}(\mR^{n})$ satisfies \eqref{eq:assumption-A}. If $\bfu_{*}=(u_{*,1},\cdots,u_{*,m})$ is a minimizer of $\mJ_{\bff}$ over $\mS_{m}$, then it has a continuous representative, i.e. such that $u_{*,i} \in C_c(\mR^n)$ for each $i=1,2,\cdots,m$.
\end{corollary}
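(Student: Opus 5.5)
We derive continuity from the two-sided Laplacian bounds that \Cref{lem:improvement-prop1-AS16} provides for suitable combinations of the $u_{*,i}$, together with semicontinuity properties of sub- and superharmonic functions. Compact support is already known from \cite[Theorem~1]{AS16MultiPhaseQD} and \eqref{eq:monotonicity-support}, so only continuity needs proof.

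\textbf{Step 1 (lower semicontinuity of each $u_{*,j}$).} Fix $j$ and put $w_j:=u_{*,j}-\sum_{\ell\neq j}u_{*,\ell}$. By \eqref{eq:Euler-Lagrange-general},
\begin{equation*}
-\Delta w_j\ \ge\ f_j\chi_{\{u_{*,j}>0\}}-\sum_{\ell\neq j}f_\ell\chi_{\{u_{*,\ell}>0\}}\ \ge\ -F,
\end{equation*}
where $F:=\sum_\ell |f_\ell|\chi_{K}\in L^\infty$ has compact support and $K$ is a compact set containing all supports (available by \eqref{eq:boundedness} and \eqref{eq:monotonicity-support}). Hence $w_j+U^{F}$ is superharmonic. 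Taking its canonical representative, given by the limit of ball averages, $w_j+U^F$ is LSC. Since $F\in L^\infty$ with compact support, $U^F\in C^{1,\alpha}$, so $w_j$ has an LSC representative. Because the $u_{*,\ell}$ are nonnegative with pairwise disjoint positivity sets, $u_{*,j}=w_j^{+}$ a.e. The map $t\mapsto t^+$ is continuous and nondecreasing, so $w_j^{+}$ is an LSC representative of $u_{*,j}$.

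\textbf{Step 2 (upper semicontinuity).} By the same reasoning with the inequality reversed, $-\Delta(-w_j)\le F$, so $-w_j-U^F$ is subharmonic. Thus $w_j=\sum_{\ell\ne j}u_{*,\ell}-\cdots$ seen from this side gives a USC representative of $w_j$ as well. The technical point is to check that the two fine representatives agree. Both equal the limit of ball averages $\lim_{r\to0}\dashint_{B_r(x)}w_j\,\rmd\sfm$ at every point: for a $\delta$-subharmonic function of the form $U^F-U^\eta$ with $F$ bounded, the averages of $U^F$ converge everywhere, and the averages of the superharmonic part $-U^{\eta}$ converge monotonically. The last line of the proof of \Cref{lem:improvement-prop1-AS16} already writes the relevant difference in this form. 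So $w_j$ admits a representative that is simultaneously LSC and USC, hence continuous.

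To make this precise we use both inequalities at once. Since $-\Delta w_j$ is a signed measure bounded above and below by $L^\infty$ functions, we have $|\Delta w_j|\le F$ as measures. Therefore $\Delta w_j\in L^\infty$ with compact support, so $w_j=-U^{\Delta w_j}+h$ with $h$ harmonic. Since $w_j$ is compactly supported, it lies in $H^1$ and $h$ is constant. This immediately gives $w_j\in C^{1,\alpha}_{\rm loc}$, and then $u_{*,j}=w_j^+$ is continuous with compact support.

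The main obstacle is justifying that \eqref{eq:Euler-Lagrange-general} yields a bound from both sides on the \emph{same} combination. The lemma bounds $-\Delta(\sum_{\ell\ne j}u_{*,\ell}-u_{*,j})$ from above, which bounds $-\Delta w_j$ from below only. The plan is to obtain an upper bound by summing the inequalities over indices. Adding the inequalities for $j$ and $k$ gives an upper bound on $-\Delta\bigl(2\sum_{\ell\ne j,k}u_{*,\ell}\bigr)$, but not the needed one, and for $m\ge3$ this may fail to produce a two-sided bound. If it fails, I fall back on a one-sided argument. From Step 1, $u_{*,j}=w_j^+$ is LSC. Moreover, $u_{*,j}\le v_{*,j}$ by \eqref{eq:monotonicity-support}, and on the open set $\{w_j>0\}$, where every other $u_{*,\ell}=0$ by \Cref{rem:quasicont-rep}, we have $-\Delta u_{*,j}=f_j$, making $u_{*,j}$ locally $C^{1,\alpha}$ there. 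Upper semicontinuity at boundary points $x_0$ of $\{u_{*,j}>0\}$ then follows from a Harnack/mean-value estimate on the subharmonic function $u_{*,j}+U^{F}$. On $\mR^n$, $u_{*,j}=w_j^+$ is the positive part of a function with Laplacian bounded below by $-F$, and the positive part of a subharmonic-up-to-$U^F$ function is again subharmonic-up-to-$U^F$. So $u_{*,j}$ is USC and, together with Step 1, continuous.
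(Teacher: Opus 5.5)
\textbf{There is a genuine gap: Step~2 does not establish upper semicontinuity for $m\ge 3$.}

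Your Step~1 is correct and matches the paper's first step. $w_j+U^F$ is superharmonic, so $w_j$ has an LSC representative, $Q_j=\{w_j>0\}$ is open, and $w_j^+$ is an LSC representative of $u_{*,j}$. The failures in Step~2 are as follows.

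(i) The ``reversed'' inequality $-\Delta(-w_j)\le F$ is the same statement as $-\Delta w_j\ge -F$. It again yields only an LSC representative of $w_j$.

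(ii) The two-sided bound $|\Delta w_j|\le F$ holds for $m=2$, where the inequalities for $j=1,2$ combine into an equation for $u_{*,1}-u_{*,2}$. It is false in general for $m\ge 3$. Near a point of $\partial Q_\ell\cap\partial Q_k$ with $\ell,k\ne j$, away from the other phases, $w_j=-(u_{*,\ell}+u_{*,k})=-|u_{*,\ell}-u_{*,k}|$. This has a concave kink wherever $\nabla(u_{*,\ell}-u_{*,k})\ne 0$, so $\Delta w_j$ carries a negative singular part on that interface and $w_j\notin C^1$.

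(iii) The fallback has a sign error. $-\Delta w_j\ge -F$ means $\Delta w_j\le F$, so $w_j$ is \emph{super}harmonic up to $U^F$. The positive part of a superharmonic function is in general neither sub- nor superharmonic. So ``the positive part of a subharmonic function is subharmonic'' does not apply, and no USC representative of $u_{*,j}$ comes out of this.

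\textbf{Two ingredients are missing.}

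First, you need the global bound $-\Delta u_{*,j}\le f_j\chi_{Q_j}$ in all of $\mR^n$, i.e.\ that $u_{*,j}-U^{f_j\chi_{Q_j}}$ is subharmonic.
\begin{itemize}
\item The paper gets it from Kato's inequality applied to $u_{*,j}=w_j^+$. This yields $u_{*,j}=U^{f_j|_{Q_j}}-U^{\gamma_j}$ with $\gamma_j\ge 0$ on $\partial Q_j$.
\item This uses $-\Delta u_{*,j}=f_j$ in $Q_j$. The ``$\le$'' half of that equation needs \eqref{eq:Euler-Lagrange-general} for an index $\ell\ne j$, not only for $j$.
\item An elementary substitute is to test that equation with $\psi\min\{u_{*,j}/\epsilon,1\}\in H^1_0(Q_j)$, where $0\le\psi\in C_c^\infty(\mR^n)$, and let $\epsilon\to 0$.
\item For $m=3$, adding \eqref{eq:Euler-Lagrange-general} for the two indices other than $i$ gives $-\Delta(2u_{*,i})\le 2f_i\chi_{Q_i}$ directly. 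So the sum you discarded is exactly what is needed there. For $m\ge 4$ no nonnegative combination isolates a single $u_{*,i}$.
\end{itemize}

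Second, an LSC and a USC representative of $u_{*,j}$ need not coincide pointwise, so ``USC together with Step~1'' does not yet give continuity.
\begin{itemize}
\item The paper settles this with thinness. A discontinuity could only occur at some $y\in\partial Q_j$ where $\mR^n\setminus Q_j$ is thin. Then $\mR^n\setminus Q_\ell$ is non-thin at $y$ for every $\ell\ne j$, so those $u_{*,\ell}$ are continuous with value $0$ at $y$. The LSC representative then has $w_j(y)>0$, i.e.\ $y\in Q_j$, which contradicts $y\in\partial Q_j$ since $Q_j$ is open.
\item Alternatively, use a density argument. Let $\hat u_\ell(x)$ be the limit of the averages of $u_{*,\ell}$ over $B_r(x)$, which gives the USC representatives. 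If $\hat u_\ell(x)>0$, then $\{\hat u_\ell>0\}$ has density $1$ at $x$, so at most one $\hat u_\ell(x)$ is positive. Since the LSC representative satisfies $w_j=\hat u_j-\sum_{\ell\ne j}\hat u_\ell$ pointwise (both sides being limits of ball averages), this gives $w_j^+=\hat u_j$ everywhere.
\end{itemize}
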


\begin{proof}
We note that \Cref{lem:improvement-prop1-AS16} above allows us to first refine the choice of representatives from \Cref{rem:quasicont-rep} for the minimizers. Because by \eqref{eq:boundedness} and \eqref{eq:monotonicity-support}, the minimizer $\mathbf{u}_{*}$ has compact support. Consequently, the right-hand side of \eqref{eq:Euler-Lagrange-general}, denoted as $F_j$, belongs to $L^{\infty}(\mR^{n})$ and has compact support. It follows that its Newtonian potential $U^{F_j}$ lies in $C_{\rm loc}^{1}(\mR^{n})$, and it now follows from \eqref{eq:Euler-Lagrange-general} that
\begin{equation*}
\eta_j:=\Delta \left(\sum_{\ell\neq j}u_{*,\ell}-u_{*,j}-U^{F_j}\right) \ge 0 \quad \text{in $\mR^{n}$,}
\end{equation*}
where $\eta_j$ is a Radon measure with compact support.
Therefore (see, e.g., \cite[Theorem~4.1.8]{Hoermander_book_1})
\[\sum_{\ell\neq j}u_{*,\ell}-u_{*,j}-U^{F_j}=-U^{\eta_j} \textrm{ (a.e.) }\] 
where, in particular, the right hand side is USC. Hence we have that
\[u_{*,j}-\sum_{\ell\neq j}u_{*,\ell} = U^{\eta_j} - U^{F_j} \textrm{ (a.e.)},\]
and the right hand side is LSC. So the set $Q_j=\{U^{\eta_j} - U^{F_j}>0\}$ is open. 
It is now clear that $u_{*,j}=w_j$ (a.e.), where
\[w_j={\rm max}\{U^{\eta_j} - U^{F_j},0\} = \begin{cases}
    U^{\eta_j} - U^{F_j} & \textrm{ in } Q_j,\\ 0 & \textrm{ in } Q_j^c.
\end{cases}\]
If we use these representatives $(w_1,w_2,\cdots,w_m)$, then we note that inside $Q_{\ell}$ we have by \eqref{eq:Euler-Lagrange-general} that $-\Delta w_{\ell}\le f_{\ell}$ for $\ell \ne j$, and $-\Delta w_j\ge f_j$ in $Q_j$. But interchanging the role of $j$ and some other $\ell$ then shows that we must actually have equality here. It is also clear that the sets $Q_1,Q_2,\cdots,Q_m$ are pairwise disjoint.
It follows by Kato's inequality (see, e.g., \cite{BP04KatoInequality} or \cite[Corollary~2.3]{GS12TwoPhaseQD} for a short alternative proof) that
\[\gamma_i= \Delta w_i|_{Q_i^c} \ge 0,\]
where $\gamma_i$ is supported on $\partial Q_i$. And furthermore, since $F_j=-f_j$ a.e. in $Q_j$,
\[w_i=U^{f_i|_{Q_i}}-U^{\gamma_i} \textrm{ (q.e.)},\]
and the only points where this equality potentially fails are those $y \in \partial Q_i$ such that $\mR^n \setminus Q_i$ is thin at $y$ (i.e. irregular for Dirichlet's problem). 
All involved functions, due to \eqref{eq:monotonicity-support}, are bounded, and we now have the equality 
\begin{equation*}
\begin{aligned}
w_j-\sum_{\ell\neq j}w_{\ell} &= U^{\eta_j} - U^{F_j} = U^{f_j|_{Q_j}}-U^{\gamma_j} +\sum_{\ell \ne j} U^{\gamma_{\ell}} - \sum_{\ell \ne j} U^{f_{\ell}|_{Q_{\ell}}} \\
&= \sum_{\ell \ne j} U^{\gamma_{\ell}} - U^{\gamma_j} - U^{F_j}  \textrm{ (q.e.)},
\end{aligned}
\end{equation*}
since $F_j=\sum_{\ell \ne j} f_{\ell}|_{Q_{\ell}} - f_j|_{Q_j}$ (a.e.).
From this we see that we must have $\eta_j=\sum_{\ell \ne j} \gamma_l - \gamma_j$.
It follows that the only way that $w_j$ could be discontinuous is that we have a point $y$ on $\partial Q_j$ where $\mR^n \setminus Q_j$ is thin, and such that 
\begin{equation*}
(U^{f_j|_{Q_j}}-U^{\gamma_j})(y) = \limsup_{x \to y, x \in Q_j} (U^{f_j|_{Q_j}}-U^{\gamma_j})(x)>0. 
\end{equation*}
But if this is the case, then for all $\ell \ne j$ the complement of $Q_{\ell}$ is non-thin at $y$, and therefore $U^{f_{\ell}|_{Q_{\ell}}} - U^{\gamma_{\ell}}$ is continuous at this point with value $0$. Hence $(U^{\eta_j} - U^{F_j})(y)=(U^{f_j|_{Q_j}}-U^{\gamma_j})(y)>0$.
But then we have by LSC
\[0<(U^{\eta_j} - U^{F_j})(y) \le \liminf_{x \to y} (U^{\eta_j} - U^{F_j})(x) \le 0\]
which gives a contradiction. Hence the functions $U^{f_i|_{Q_i}}-U^{\gamma_i}$ are continuous for all $i$, so that the minimizer always has a continuous representative.
\end{proof}

The following is now a more or less immediate consequence of \Cref{lem:improvement-prop1-AS16} above:
\begin{corollary}\label{cor:sufficient-mQD}
Let $\mu_{1},\cdots,\mu_{m}\in L^{\infty}(\mR^{n})$ be measures with compact support and let $\bff$ satisfy \eqref{eq:special-choice-f}. Suppose that $\bfu_{*}=(u_{*,1},\cdots,u_{*,m})$ is a minimizer of $\mJ_{\bff}$ over $\mS_{m}$ that satisfies the support condition 
\begin{equation*}
\supp\,(\mu_{i})\subset Q_{i}:=\{u_{*,i}>0\} \quad \text{for all $i=1,\cdots,m$}, 
\end{equation*}
then $(Q_{1},\cdots,Q_{m})$ is a $m$-phase quadrature domain with respect to $(\mu_{1},\cdots,\mu_{m})$ in the sense of \Cref{def:m-phase-QD}. 
\end{corollary}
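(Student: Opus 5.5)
We verify the requirements of \Cref{def:m-phase-QD} directly for the representatives produced in \Cref{cor:cont-min-Sm}. By that corollary we may take $u_{*,1},\cdots,u_{*,m}\in C_c(\mR^n)$, non-negative. They are in particular lower semicontinuous, and each $Q_i=\{u_{*,i}>0\}$ is then an open set. Since $u_{*,i}$ has compact support, $Q_i$ is bounded. Segregation $u_{*,i}u_{*,j}=0$ a.e. together with continuity gives $u_{*,i}u_{*,j}=0$ everywhere. Indeed, if both were positive at a point they would be positive on a neighborhood, which has positive measure. Hence the sets $Q_1,\cdots,Q_m$ are pairwise disjoint.

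Next comes the support condition. Each $\mu_i$ has compact support, and by hypothesis $\supp(\mu_i)\subset Q_i$ with $Q_i$ open. So $\mu_i\in\mM(Q_i)$, which is exactly the membership required in \Cref{def:m-phase-QD}. Here $\mu_i$ is viewed as the measure $\mu_i\,\rmd\sfm$, which is a positive Radon measure since $\mu_i\in L^\infty$ is non-negative.

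Finally, the differential inequality \eqref{eq:multiphase-PDE-strong} is precisely \eqref{eq:Euler-Lagrange-general} from \Cref{lem:improvement-prop1-AS16} under the choice \eqref{eq:special-choice-f}, $f_\ell=\mu_\ell-1$. That lemma yields, for each $j$,
\begin{equation*}
-\Delta\Bigl(\sum_{\ell\neq j}u_{*,\ell}-u_{*,j}\Bigr)\le \sum_{\ell\ne j}(\mu_\ell-1)\chi_{Q_\ell}-(\mu_j-1)\chi_{Q_j}\quad\text{in }\mR^n .
\end{equation*}
The one point to check is that the sets $\{u_{*,\ell}>0\}$ in the lemma may be replaced by the open sets $Q_\ell$ defined through the continuous representatives. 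This holds because the right-hand side only enters as an element of $L^\infty$, that is, up to null sets. Changing representatives changes $\{u_{*,\ell}>0\}$ only by a null set, so the distributional inequality is unaffected.

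The main, though mild, obstacle is thus purely bookkeeping about representatives. We must ensure that the same continuous representative is used for defining $Q_i$, for the lower semicontinuity, and in the inequality, and \Cref{cor:cont-min-Sm} supplies exactly this. With it, $(Q_1,\cdots,Q_m)$ is a strong $m$-phase quadrature domain with respect to $(\mu_1,\cdots,\mu_m)$.
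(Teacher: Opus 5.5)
Your proposal is correct and follows the same route as the paper, which treats this corollary as an immediate consequence of \Cref{lem:improvement-prop1-AS16} with $f_\ell=\mu_\ell-1$. Your use of the continuous representatives from \Cref{cor:cont-min-Sm} to get lower semicontinuity, openness, boundedness and pairwise disjointness of the $Q_i$ is exactly the bookkeeping the paper leaves implicit. Reading the support hypothesis for these continuous representatives is also the right interpretation.
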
 

\begin{lemma}\label{lem:support-of-measure-minimizer}
Let $\bfu_{*}=(u_{*,1},\cdots,u_{*,m})$ denote a minimizer of $\mJ_{\bff}$ over $\mS_{m}$, where $\bff$ satisfies \eqref{eq:special-choice-f}. Then $\mu_j \leq 1$ in $\mR^n \setminus \left(\overline{ \bigcup_{\ell\ne j}  \{u_{*,\ell}>0\}} \cup \{u_{*,j}>0\}\right)$ for every $j$.
\end{lemma}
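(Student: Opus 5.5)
The plan is to exploit the minimality of $\bfu_{*}$ directly by a one-phase perturbation localized in the open set
\begin{equation*}
G:=\mR^n \setminus \Bigl(\overline{ \textstyle\bigcup_{\ell\ne j}  \{u_{*,\ell}>0\}} \cup \{u_{*,j}>0\}\Bigr),
\end{equation*}
where we use the continuous representatives from \Cref{cor:cont-min-Sm}; then $\{u_{*,j}>0\}$ is open and $G$ is indeed open. On $G$ all components vanish. Suppose, for contradiction, that $\mu_j>1$ on a subset $E\subset G$ of positive Lebesgue measure. Since $E$ has positive measure, it has a Lebesgue density point $x_0$, and we may pick a ball $B=B_r(x_0)\subset G$ with $\sfm(\{\mu_j>1\}\cap B)>0$. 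Shrinking slightly and using Lebesgue differentiation, we can further arrange that $\mu_j\ge 1+\delta$ on a subset of $B$ of proportion close to one.

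Next we construct a competitor. Take $\psi\in C_c^\infty(B)$ with $\psi\ge0$, and set $\bfz$ equal to $\bfu_{*}$ except that $z_j=u_{*,j}+\epsilon\psi$. Since all other $u_{*,\ell}$ vanish on $B\subset G$, the state $\bfz$ is still segregated, so $\bfz\in\mS_m$. Moreover $u_{*,j}=0$ on $B$, so the cross term $\int\nabla u_{*,j}\cdot\nabla\psi$ vanishes. Hence
\begin{equation*}
\mJ_{\bff}(\bfz)-\mJ_{\bff}(\bfu_{*})=\epsilon^2\int|\nabla\psi|^2\,\rmd\sfm-2\epsilon\int(\mu_j-1)\psi\,\rmd\sfm .
\end{equation*}
If we choose $\psi$ with $\int(\mu_j-1)\psi>0$, then for small $\epsilon$ this difference is negative, which contradicts minimality. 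Such a $\psi$ exists: approximate $\chi_{E\cap B}$ in $L^1$ by nonnegative smooth functions supported in $B$, using that $\mu_j-1\in L^\infty$.

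The only subtle point is to make sure $\int(\mu_j-1)\psi>0$ can actually be achieved, given that $\mu_j-1$ may be negative on much of $B$. The density argument above handles this. Alternatively, the same conclusion follows by testing \eqref{eq:Euler-Lagrange-general} (computed before discarding the term $-f_j\chi_{\cap_\ell\{u_{*,\ell}=0\}}$) with $\psi$ supported in $G$. There the left-hand side is $0$, since all $u_{*,\ell}$ vanish on $G$, so we get $0\le -(\mu_j-1)$ a.e. in $G$. This is exactly the claim.

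The one point to check carefully is that $G$ is open and that all $u_{*,\ell}$ vanish identically on it. This requires the continuous representatives of \Cref{cor:cont-min-Sm}; with those, $\{u_{*,j}>0\}$ is open and $u_{*,j}=0$ on its complement.
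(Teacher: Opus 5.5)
Your argument breaks down at the point you single out as needing care: the set $G$ is \emph{not} open. With the continuous representatives, put $U:=\mR^n\setminus\overline{\bigcup_{\ell\ne j}\{u_{*,\ell}>0\}}$ and $Q_j:=\{u_{*,j}>0\}$; both are open. Then $G=U\setminus Q_j$ is only relatively closed in $U$. Its interior is $U\setminus\overline{Q_j}$, and $G\setminus\operatorname{int}G=U\cap\partial Q_j$.

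A density point of $\{\mu_j>1\}\cap G$ need not be an interior point of $G$, so the ball $B\subset G$ need not exist. Perturbations $z_j=u_{*,j}+\epsilon\psi$ with $\psi$ supported in $G$ therefore only give $\mu_j\le 1$ a.e.\ on $U\setminus\overline{Q_j}$. Nothing available tells you that $\sfm(U\cap\partial Q_j)=0$. The usual nondegeneracy argument for free boundaries needs $f_j<0$ near $\partial Q_j$, which is what is being proved. The equation $-\Delta u_{*,j}=f_j\chi_{Q_j}$ alone does not exclude a free boundary of positive measure carrying $f_j>0$: a smooth $u\ge0$ vanishing exactly on a closed set of positive measure with empty interior satisfies such an equation for arbitrary values of $f$ there, since $\Delta u=0$ a.e.\ on that set.

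This boundary portion is really needed. In the proof of \Cref{thm:main-1-ts1} the lemma is used to get $-\Delta(U^{\mu_i}-u_{*,i})=\chi_{Q_i}+\mu_i\chi_{\mR^n\setminus Q_i}\le 1$ in $D_i$, which requires $\mu_i\le 1$ a.e.\ on $D_i\cap\partial Q_i$ too. Your second route has the same defect. The left-hand side of the undiscarded inequality vanishes as a distribution only on $\operatorname{int}G$. On $U\cap\partial Q_j$ you would additionally need that the absolutely continuous part of $\Delta(\sum_{\ell\ne j}u_{*,\ell}-u_{*,j})$ vanishes a.e.\ on $\bigcap_\ell\{u_{*,\ell}=0\}$.

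The repair is to let the perturbation live in $U$ rather than in $G$, which is what the paper does. It adds $u=W^{c\sfm|_K}$ to $u_{*,j}$, where $K\subset G$ is compact with $\sfm(K)>0$, $\mu_j>c>1$ on $K$, and $\omega(c\chi_K)\subset U$. This keeps the state segregated and is allowed to overlap $Q_j$ and $\partial Q_j$. The cross term $2\int\nabla u\cdot\nabla u_{*,j}\,\rmd\sfm$ is evaluated using $-\Delta u_{*,j}=f_j\chi_{Q_j}$ in $U$, which yields $J_{f_j}(u_{*,j}+u)\le J_{f_j}(u_{*,j})-\int|\nabla u|^2\,\rmd\sfm$.

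In your own framework the fix reads as follows. For $0\le\psi\in C_c^\infty(U)$, the state with $z_j=u_{*,j}+\epsilon\psi$ is still segregated, so minimality gives $-\Delta u_{*,j}\ge f_j$ in $U$. Compare this with \eqref{eq:m-phase-PDE-TS}, i.e.\ $-\Delta u_{*,j}=f_j\chi_{Q_j}$ in $U$, which follows from \Cref{lem:improvement-prop1-AS16} by interchanging indices. You get $f_j\chi_{\{u_{*,j}=0\}}\le 0$ a.e.\ in $U$, which is the claim on all of $G$.
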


\begin{proof}
Suppose this is not the case. Then we may without loss of generality assume that $j=1$. Then there is some $c>1$ and some suitable small compact subset $K$ of $\mR^n \setminus \left(\overline{ \bigcup_{\ell\ne 1}  \{u_{*,\ell}>0\}} \cup \{u_{*,1}>0\}\right)$ of positive measure such that $\mu_1 >c\chi_{K}$ and also $\omega(c\chi_K) \subset\mR^n \setminus \overline{ \bigcup_{\ell \ne 1} \{u_{*,1}>0\}}$. 
We may now add $u=W^{c\sfm|_K}$ to $u_1$, and then it is immediate by construction that $(u_{*,1}+u,u_{*,2},\cdots,u_{*,m})$ still belongs to $\mS_m$. 
Furthermore, since we have $(c\chi_K-1)-2f_1\chi_{\{u_{*,1}=0\}} \le 1-c\chi_K$, $\nabla u_{*,1}=0$ a.e. in $\{ u_{*,1}=0\} \setminus \overline{ \bigcup_{\ell \ne 1} \{u_{*,1}>0\}}$ and $-\Delta u_{*,1}=f_1\chi_{\{u_{*,1}>0\}}$ in $\{u>0\}$, we get
\begin{align*}
    J_{f_1}(u_{*,1}+u) &= \int|\nabla u_{*,1}+\nabla u|^2 \,\rmd\sfm - 2 \int f_1(u_{*,1}+u) \,\rmd\sfm \\
    &= J_{f_1}(u_{*,1})+ \int |\nabla u|^2 \,\rmd\sfm + 2\int \nabla u \cdot \nabla u_{*,1}d\sfm - 2\int f_1u \,\rmd\sfm\\
    &=J_{f_1}(u_{*,1})+\int u(c\chi_K-1) \,\rmd\sfm +2\int u f_1 \chi_{\{u_{*,1}>0\}} \,\rmd\sfm - 2\int f_1u \,\rmd\sfm \\
    &\le J_{f_1}(u_{*,1})+ \int u(c\chi_K-1) \,\rmd\sfm - 2\int f_1 u \chi_{\{u_{*,1}=0\}} \,\rmd\sfm \\
    & \le J_{f_1}(u_{*,1}) - \int u(c\chi_K-1) \,\rmd\sfm = J_{f_1}(u_{*,1})-\int |\nabla u|^2 \,\rmd\sfm <J_{f_1}(u_{*,1}), 
\end{align*}
which contradicts that $\bfu_{*}$ is a minimizer.
\end{proof}

Using \Cref{lem:improvement-prop1-AS16}, we deduce that if $\bfu=(u_{1},\cdots,u_{m})\in\mS_{m}$ is a minimizer of $\mJ_{\bff}$ over $\mS_{m}$ with $f_{i}=\mu_{i}-1$ for each $i=1,\cdots,m$, then it satisfies
\begin{equation}\label{eq:minsmmu-cond-1}
-\Delta\left(\sum_{\ell \ne j} u_{\ell}-u_{j}\right) \leq \sum_{\ell \ne j}(\mu_{\ell}-1)\chi_{\{u_{\ell}>0\}}-(\mu_{j}-1)\chi_{\{u_{j}>0\}} \quad\text{in $\mR^{n}$.}
\end{equation}
However, the converse does not hold in general: a function in $\mS_{m}$ satisfying \eqref{eq:minsmmu-cond-1} need not be a minimizer of $\mJ_{\bff}$ over $\mS_{m}$. We now show that this partial converse becomes valid when restricted to the smaller class $\mS_{m,\bmmu}$ defined by 

\begin{equation}
\mS_{m,\bmmu} = \left\{ \bfu=(u_{1},\cdots,u_{m})\in\mS_{m} : \mu_{i}(\{u_{i}=0\})=0 \text{ for all $i$} \right\}. \label{eq:smaller-class}
\end{equation}
Here, we do not assume that $\mu_{i}$ has compact support in $\{u_{i}>0\}$, we only require that it does not charge the complement, that is, $\mu_{i}(\{u_{i}=0\})=0$. 
It is worth noting, however, that $\mS_{m,\bmmu}$ is not closed. Consequently, a minimizer over this class need not exist in general. 

\begin{proposition}\label{prop:uniqueness-procedure}
Let $\bfu_{*}=(u_{*,1},\cdots,u_{*,m})\in\mS_{m,\bmmu}$ and $\bmmu=(\mu_{1},\cdots,\mu_{m})$ for each $j=1,\cdots,m$ satisfy \eqref{eq:minsmmu-cond-1}. 
Then $\bfu_{*}$ is the \emph{unique} minimizer of $\mJ_{\bff}$ over $\mS_{m,\bmmu}$, where $f_i=\mu_i-1$ for each $i=1,2,\cdots,m$.  
\end{proposition}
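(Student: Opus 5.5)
The plan is to compare $\mJ_{\bff}(\bfv)$ with $\mJ_{\bff}(\bfu_*)$ directly for an arbitrary competitor $\bfv=(v_1,\cdots,v_m)\in\mS_{m,\bmmu}$. Write $u_i=u_{*,i}$ and $Q_i=\{u_i>0\}$. Expanding,
\begin{equation*}
\mJ_{\bff}(\bfv)-\mJ_{\bff}(\bfu_*)=\sum_{i}\int\Bigl(\abs{\nabla v_i}^2-2\nabla u_i\cdot\nabla v_i+\abs{\nabla u_i}^2\Bigr)\rmd\sfm+2\sum_i\int\Bigl(\nabla u_i\cdot\nabla v_i-\abs{\nabla u_i}^2-f_i(v_i-u_i)\Bigr)\rmd\sfm .
\end{equation*}
The first sum is $\sum_i\int\abs{\nabla(v_i-u_i)}^2$. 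I then need a lower bound for the linear part, using \eqref{eq:minsmmu-cond-1} tested against the non-negative functions $v_j$ and $u_j$.

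\textbf{Testing with $u_j$.} First I would show $\int\abs{\nabla u_i}^2=\int f_iu_i$. As in the proof of \Cref{cor:cont-min-Sm}, interchanging the roles of $j$ and $\ell$ in \eqref{eq:minsmmu-cond-1} gives $-\Delta u_i=f_i$ in $Q_i$. Since $u_i\in H^1$ vanishes q.e. off $Q_i$, testing with $u_i$ is legitimate after approximation.

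\textbf{Testing with $v_j$.} Testing \eqref{eq:minsmmu-cond-1} for index $j$ with $v_j\ge0$ gives
\begin{equation*}
\int\nabla u_j\cdot\nabla v_j\,\rmd\sfm\ge\int f_j\chi_{Q_j}v_j\,\rmd\sfm+\sum_{\ell\ne j}\int\Bigl(\nabla u_\ell\cdot\nabla v_j-f_\ell\chi_{Q_\ell}v_j\Bigr)\rmd\sfm .
\end{equation*}
Summing over $j$ and inserting this into the linear part, the linear part is at least
\begin{equation*}
2\sum_j\sum_{\ell\ne j}\int\nabla u_\ell\cdot\nabla v_j\,\rmd\sfm+2\sum_j\int v_j\Bigl(-f_j\chi_{\{u_j=0\}}-\sum_{\ell\ne j}f_\ell\chi_{Q_\ell}\Bigr)\rmd\sfm .
\end{equation*}

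\textbf{The zero-order term.} I treat it pointwise. On $\{v_j>0,\ u_j=0\}$ we have $\mu_j=0$ a.e., because $\mu_j$ has an $L^\infty$ density and $\mu_j(\{u_j=0\})=0$. Hence $f_j=-1$ there. On $Q_\ell\cap\{v_j>0\}$ with $\ell\ne j$, segregation of $\bfv$ gives $v_\ell=0$. Since $\bfv\in\mS_{m,\bmmu}$, $\mu_\ell=0$ a.e. there, so $f_\ell=-1$. This is exactly where membership in $\mS_{m,\bmmu}$ (for both $\bfu_*$ and $\bfv$) is used. With these values the integrand is non-negative.

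\textbf{The cross-gradient terms.} Because both states are segregated, at a.e. point at most one $\nabla u_\ell$ and at most one $\nabla v_j$ is nonzero. If they carry the same index, the cross sum vanishes there. If they carry different indices $\ell\ne j$, then $\sum_i\abs{\nabla(v_i-u_i)}^2=\abs{\nabla v_j}^2+\abs{\nabla u_\ell}^2\ge-2\nabla u_\ell\cdot\nabla v_j$. So the quadratic term absorbs the cross term pointwise, and the total is
\begin{equation*}
\mJ_{\bff}(\bfv)-\mJ_{\bff}(\bfu_*)\ge\int\Bigl(\abs{\nabla v_j}-\abs{\nabla u_\ell}\Bigr)^2\rmd\sfm\ge0
\end{equation*}
on the "mixed" set, and $\ge\int\abs{\nabla(v_i-u_i)}^2$ on the "same index" set. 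This gives minimality.

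\textbf{Uniqueness.} This is the main obstacle. Equality forces the pointwise inequalities above to be equalities. On the same-index set this means $\nabla v_i=\nabla u_i$. On the mixed set it means $\nabla v_j=-\nabla u_\ell$, which is not immediately zero. To handle the mixed set I would keep the discarded zero-order term: there $f=-1$ gives a strictly positive contribution $\int v_j$ wherever $v_j>0$ on $\{u_j=0\}$. Equality therefore forces $\{v_j>0\}\subset Q_j$ up to null sets. Applying the same argument after interchanging the roles of the two states (if $\bfv$ is also a minimizer, it satisfies \eqref{eq:minsmmu-cond-1} by \Cref{lem:improvement-prop1-AS16}, suitably adapted to the class $\mS_{m,\bmmu}$) would give equality of the positivity sets. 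The mixed set then becomes null, so $\nabla(v_i-u_i)=0$ a.e. and $v_i=u_i$ as $H^1(\mR^n)$ functions.

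If that adaptation is delicate, my fallback is to use directly that equality forces $\int v_j(\dots)=0$ and $\nabla(v_i-u_i)=0$ off the mixed set. I would then argue that the mixed set has measure zero because $v_j>0$ forces $u_j>0$.
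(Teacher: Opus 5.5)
Your proof is correct and is essentially the paper's. The paper writes $v_i=u_{*,i}+g_i$, tests \eqref{eq:minsmmu-cond-1} with $g_i^+$, and cancels the $g_i^-$ terms via $-\Delta u_{*,i}=f_i$ in $Q_i$. You instead test with $v_j$ and use $\int\abs{\nabla u_i}^2\,\rmd\sfm=\int f_iu_i\,\rmd\sfm$. The use of $\mS_{m,\bmmu}$ to get $f=-1$ on the relevant sets, and the pointwise absorption of the cross terms by segregation, are identical (on the mixed set the integrand is exactly $\abs{\nabla v_j+\nabla u_\ell}^2$).

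For uniqueness, drop the interchange step. Its stated justification fails, because a minimizer over $\mS_{m,\bmmu}$ is only known to satisfy the inequality off $\bigcup_{\ell\ne j}\supp(\mu_\ell)$ (\Cref{lem:improvement-prop1-AS16-SMMU}). It is also not needed, since your fallback is already complete. Equality forces $\int_{\{u_j=0\}}v_j\,\rmd\sfm=0$, so $\{v_j>0\}\subset Q_j$ a.e. The mixed set then lies in $\bigcup_{\ell\ne j}\{u_\ell>0,\,u_j>0\}$ and is null, and $\nabla(v_i-u_i)=0$ a.e. gives $v_i=u_i$. This zero-order-term route is slightly cleaner than the paper's. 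The paper gets the inclusion from $\int_{\{u_{*,i}=0\}}\abs{\nabla g_i}^2\,\rmd\sfm>0$ via a remark about $\overline{Q_i}$ and then recomputes $J_{f_i}(u_{*,i}+g_i)$, whereas yours stays in one inequality chain and uses only a.e.\ information.
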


In particular, in view of \Cref{lem:average-L-infty}, \Cref{thm:main-3} follows essentially as a direct consequence of \Cref{prop:uniqueness-procedure}.

\begin{proof}[Proof of \Cref{prop:uniqueness-procedure}]
Suppose $\bfv \in \mS_{m,\bmmu}$ is another element, and write its components as $v_i=u_{*,i}+g_i$. Note then the following properties to be used below that $g_i$ satisfies by assumption
\begin{itemize}
\item $g_i=v_i$ when $u_{*,i}=0$, so in particular $g_i \geq 0$ if $u_{*,i}=0$.
\item If $\ell \ne i$ and $u_{*,i}(x)+g_i(x)>0$, then $u_{*,\ell}(x)=-g_\ell(x)$.
\item If $\ell \ne i$ then $g_if_\ell=-g_i$, because $v_i=g_i=0$ on the support of $\mu_\ell$.  
\end{itemize}
Now we may compute
\begin{align*}
&J_{f_i}(u_{*,i}+g_i)=\int |\nabla u_{*,i} + \nabla g_i|^2\,\rmd \sfm-2\int (u_{*,i}+g_i)f_i\,\rmd \sfm\\
&= \int |\nabla u_{*,i}|^2\,\rmd \sfm-2\int u_{*,i}f_i\,\rmd \sfm +\int |\nabla g_i|^2\,\rmd \sfm
+2\int \nabla u_{*,i} \cdot \nabla g_i\,\rmd \sfm-2\int g_if_i\,\rmd \sfm\\
&= J_{f_i}(u_{*,i})+\int |\nabla g_i|^2\,\rmd \sfm
+2\int \nabla u_{*,i} \cdot \nabla g_i^+\,\rmd \sfm-2\int g_i^+f_i\,\rmd \sfm \\
&\quad -2\int \nabla u_{*,i} \cdot \nabla g_i^-\,\rmd \sfm+2\int g_i^-f_i\,\rmd \sfm.
\end{align*}
Since $g_i^-$ is zero when $u_{*,i}=0$ and $\Delta u_{*,i}=f_i$ when $u_{*,i}>0$ the last two terms above cancel each other. Hence
\begin{equation}
J_{f_i}(u_{*,i}+g_i)= J_{f_i}(u_{*,i})+\int |\nabla g_i|^2\,\rmd \sfm
+2\int \nabla u_{*,i} \cdot \nabla g_i^+\,\rmd \sfm-2\int g_i^+f_i\,\rmd \sfm, \label{eq:computation1}
\end{equation}
By assumption we have for each test function $\psi \geq 0$
\begin{equation}\label{eq:computation3}
    \sum_{\ell \ne i} \int \nabla u_{*,\ell} \cdot \nabla \psi \,\rmd \sfm - \int \nabla u_{*,i} \cdot \nabla \psi \,\rmd \sfm \leq \sum_{\ell \ne i} \int_{\{u_{*,\ell}>0\}} f_\ell \psi \,\rmd \sfm-\int_{\{u_{*,i}>0\}} f_i\psi \,\rmd \sfm.
\end{equation}
By approximation the above holds for any $\psi \geq 0$ in $H^1(\mR^n)$, hence we get by rearranging the terms above and letting $\psi = g_i^+$
\begin{align*}
    \int \nabla u_{*,i} \cdot \nabla g_i^+\,\rmd \sfm &\geq \sum_{\ell \ne i} \int \nabla u_{*,\ell} \cdot \nabla g_i^+ \,\rmd \sfm -\sum_{\ell \ne i} \int_{\{u_{*,\ell}>0\}} f_\ell g_i^+ \,\rmd \sfm + \int_{\{u_{*,i}>0\}} f_ig_i^+ \,\rmd \sfm\\
    & = - \sum_{\ell \ne i} \int_{\{u_{*,\ell}>0\}} \nabla g_\ell \cdot \nabla g_i \,\rmd \sfm -\sum_{\ell \ne i} \int_{\{u_{*,\ell}>0\}} f_\ell g_i \,\rmd \sfm + \int_{\{u_{*,i}>0\}} f_i g_i^+ \,\rmd \sfm\\
    &\geq - \sum_{\ell \ne i} \int_{\{u_{*,\ell}>0\}} \nabla g_\ell \cdot \nabla g_i \,\rmd \sfm  + \int_{\{u_{*,i}>0\}} f_i g_i^+ \,\rmd \sfm,
\end{align*}
where we used that $g_i^+=g_i$ when $u_{*,i}=0$, $u_{*,\ell}=-g_\ell$ when both $g_i \ne 0$ and $u_{*,\ell}>0$ and finally that $f_\ell g_i=-g_i \le 0$ whenever $u_{*,\ell}>0$ and $\ell \ne i$.

If we insert this into (\ref{eq:computation1}) and sum over $i$ we get
\begin{equation}\label{eq:computation2}
    \mJ_{\bff}(\bfv) \geq \mJ_{\bff}(\bfu_{*})+ \sum_{i=1}^{m} \left(\int |\nabla g_i|^2\,\rmd \sfm-2\sum_{\ell \ne i} \int_{\{u_{*,\ell}>0\}} \nabla g_i \cdot \nabla g_\ell \,\rmd \sfm\right),
\end{equation}
where we used that $\int_{\{u_{*,i}=0\}} g_i^+f_i d\sfm <0$.

We now note that for any given $j,k$ with $j \ne k$ if $u_{*,j}(x)>0$ and $g_k(x)>0$, then $g_\ell(x)=0$ for all $\ell \ne j,k$ by assumption. Since we also have $\nabla g_{\ell}(x)=0$ for a.e. $x$ such that $g_{\ell}(x)=0$ we therefore get for a.e. such $x$
\begin{align*}
&\sum_{i=1}^{m}|\nabla g_i(x)|^2\chi_{\{u_{*,i}>0\}}(x)-2\sum_{i=1}^{m}\sum_{\ell \ne i} \nabla g_i(x) \cdot \nabla g_\ell(x)\chi_{\{u_{*,\ell}>0\}}(x) \\
& \quad = |\nabla g_j(x)|^2+|\nabla g_k(x)|^2-2\nabla g_k(x) \cdot \nabla g_j(x) \geq 0.
\end{align*}
From this we see that indeed 
\[\sum_{i=1}^{m} \left(\int |\nabla g_i|^2\,\rmd \sfm-2\sum_{\ell \ne i} \int_{\{u_{*,\ell}>0\}} \nabla g_i \cdot \nabla g_\ell \,\rmd \sfm\right) \geq \sum_{i=1}^{m} \int_{\{u_{*,i}=0\}} |\nabla g_i|^2\,\rmd \sfm \geq  0.\]
This together with (\ref{eq:computation2}) above proves that $\bfu_{*}$ is a minimizer.

When it comes to the uniqueness part, note that in case there is a part where $u_{*,i}=0$ but $g_i>0$, then we get a strict inequality in the above estimates since $\int_{\{u_{*,i}=0\}} |\nabla g_i|^2 \rmd \sfm >0$ then (because $\{g_i>0\} \setminus \overline{\{u_{*,i}>0\}}$ can't be empty in this situation, and $g_i$ has compact support, so we can't have $\nabla g_i=0$ a.e. in $\mR^n \setminus \overline{\{u_{*,i}>0\}}$). This proves that $\{v_i>0\} \subset \{u_{*,i}>0\}$ in case $\bfv$ is also a minimizer in $\mS_{m,\bmmu}$. 
But then we would get, since $g_i=0$ when $u_{*,i}=0$, that  
\begin{align*}
&J_{f_i}(u_{*,i}+g_i)= J_{f_i}(u_{*,i}) +\int |\nabla g_i|^2\,\rmd \sfm +2\int_{\{u_{*,i}>0\}} \nabla u_{*,i} \cdot \nabla g_i\,\rmd \sfm-\int g_i f_i\,\rmd \sfm\\
&=J_{f_i}(u_{*,i})+\frac{1}{2}\int |\nabla g_i|^2\,\rmd \sfm.
\end{align*}
And unless $\int |\nabla g_i|^2\,\rmd \sfm=0$ for each $i$ we get a contradiction to that $\bfv$ is a minimizer, which proves that $g_i=0$ for all $i$. 
\end{proof}

We conclude this section with a few examples illustrating natural limitations of the energy minimization approach, already visible in the two-phase setting. 
First, we present an example showing that there exist two-phase quadrature domains in the plane $\mR^{2}$ that do not minimize $\mJ_{\bff}$ over $\mS_2$ (although they do minimize it over $\mS_{2,\bmmu}$, by \Cref{prop:uniqueness-procedure}). 
In other words, unlike the two-phase partial balayage approach in \cite{GS12TwoPhaseQD}, the energy minimization framework described above does not necessarily capture all two-phase quadrature domains. 
After that we provide an example showing that minimizers over $\mS_2$ are not necessarily unique.

\begin{example}\label{ex:counterexample-tpqd-v3}
Let $\mu_1,\mu_2\in L^{\infty}(\mR^{n})$ be nonnegative and satisfy the assumptions of \cite[Theorem~5.1]{GS12TwoPhaseQD}. Define $u_1=(\overline{W}^{\mu_{1}-\mu_{2}})^+$ and $u_2=(\overline{W}^{\mu_{1}-\mu_{2}})^-$ as in \eqref{eq:def-2-phase-balayage}. Then
\begin{equation*}
(Q_1,Q_2) := \left(\Omega^{+}(\mu_{1}-\mu_{2}),\Omega^{-}(\mu_{1}-\mu_{2})\right) = \left(\{u_1>0\},\{u_2>0\}\right) 
\end{equation*}
is a two-phase quadrature domain with respect to $(\mu_1,\mu_2)$. Assume further that $\omega(\mu_1) \cap \omega(\mu_2) \ne \emptyset$, then, in particular, we have the strict inequality 
\[J_{f_1}(u_1)>J_{f_1}(u),\]
where $u=W^{\mu_{1}}$ and $f_1=\mu_1-1\in L^{\infty}(\mR^{n})$. 

We now show that, if necessary, we can modify the function $u_2$ and the measure $\mu_2$ to obtain $\tilde{u}_2$ and $\tilde{\mu}_2$ such that $(u_1,\tilde{u}_2)$ satisfies the definition of a two-phase quadrature domain with respect to $(\mu_1,\tilde{\mu}_2)$, but 
\begin{equation}
\mJ_{(f_{1},\tilde{f}_{2})}(u,\tilde{u}_{2}) = J_{f_1}(u_1)+J_{\tilde{f}_2}(\tilde{u}_2)>J_{f_1}(u)=\mJ_{(f_{1},\tilde{f}_{2})}(u,0), \label{eq:to-do-strict-energy}
\end{equation}
where $\tilde{f}_{2}=\tilde{\mu}_{2}-1\in L^{\infty}(\mR^{n})$. This demonstrates that $(u_1,\tilde{u}_2)$, despite defining a valid two-phase quadrature domain, does not minimize the corresponding functional over $\mS_2$. 

To achieve this, choose $\epsilon>0$ small enough so that $\supp(\mu_2) \subset \{u_2 > \epsilon\}$, and define 
\begin{equation*}
w_2=\min\{u_2,\varepsilon\}. 
\end{equation*}
Next, set 
\[\eta_2=-\Delta w_2+\sfm|_{Q_1} =\gamma + \sfm|_{\{u_2>\epsilon\}},\]
where $\gamma$ is supported on $\{u_2=\epsilon\}$, a compact subset of $\{u_2>0\}$. The total mass satisfies $\gamma(Q_2)=\mu_2(Q_2)-\sfm(\{u_2>\epsilon\})$. 
It is straightforward to verify that $(u_1,w_2)$ satisfies the definition of a strong two-phase quadrature domain for subharmonic functions with respect to $(\mu_1,\eta_2)$. 
Moreover, the energy satisfies 
\begin{equation*}
\int \abs{\nabla w_2}^2 \,\rmd\sfm -2 \int w_2 \,\rmd\gamma + 2\int w_2 \,\rmd\sfm \to 0 \quad \text{as $\epsilon\rightarrow 0$.} 
\end{equation*}
Although $\gamma\notin L^{\infty}$ (it does have finite energy), this can be handled as in \Cref{lem:average-L-infty}: choose $\tau>0$ with $\tau < \epsilon/2$, replace $\gamma$ by the mollified measure $\tilde{\mu}_2=\gamma^{\tau}=\gamma*\frac{1}{\abs{B_{\tau}}}\chi_{B_{\tau}}$ and define
\begin{equation*}
\tilde u_2=w_2+U^{\gamma^{\tau}}-U^{\gamma}. 
\end{equation*} 
For suitably small $\tau,\epsilon$ we then have \eqref{eq:to-do-strict-energy}, showing that $(u_1,\tilde{u}_2)$ is not a minimizer of $\mJ_{(f_1,\tilde{f}_2)}$ over $\mS_2$, despite defining a valid two-phase quadrature domain. 
\end{example}

\begin{example} \label{ex:counterexample-unique-min}
Here we aim to illustrate that minimizers over $\mS_m$ are, in general, not unique. This phenomenon occurs already for $m=2$. The example below also demonstrates the non-existence of certain multiphase quadrature domains, highlighting the delicate nature of existence questions. To simplify the discussion, and in view of \Cref{lem:average-L-infty}, we allow measures with finite energy that need not belong to $L^{\infty}$ (a standard mollification argument can, as before, reduce to the $L^{\infty}$-case), and we restrict our analysis to $\mR^2$. 

For given radii $0<R_1<R_2$, consider the function 
\begin{equation*}
W(R_1,R_2,r)= \left\{\begin{aligned}
& \frac{r^2}{4} + C_1, && 0 \le r < R_1,\\
& \frac{r^2}{4} + C_2 \ln r + C_3, && R_1 \le r <R_2,\\
& 0, && r\ge R_2,
\end{aligned}\right. 
\end{equation*}
where 
\begin{equation*} 
C_1= \frac{R_2^2}{2}\ln(R_2/R_1)-\frac{R_2^2}{4}, \quad C_2=-\frac{R_2^2}{2}, \quad C_3=\frac{R_2^2}{2}\ln R_2 -\frac{R_2^2}{4}
\end{equation*} 
are chosen so that $W(R_1,R_2,R_2)=W'(R_1,R_2,R_2)=0$. Then we have
\begin{equation*} 
W^{\mu_{(R_1,R_2)}}(x)=W(R_1,R_2,|x|)
\end{equation*} 
where
\begin{equation*} 
\mu_{(R_1,R_2)}= \frac{R_2^2}{2R_1}\mathcal{H}^{1}|_{\partial B_{R_1}}.
\end{equation*} 
We set $f_{(R_1,R_2)}=\mu_{(R_1,R_2)}-1$, so that the corresponding energy functional is given by 
\[J_{f_{(R_1,R_2)}}(u) = \int |\nabla u|^2 \, \rmd\sfm -2\int u \, \rmd\mu_{(R_1,R_2)} + 2\int u \, \rmd\sfm.\]
Choosing $u(x)=W(R_1,R_2,|x|)$, the energy reduces to 
\begin{equation*}
J_{f_{(R_1,R_2)}}(W(R_1,R_2,|x|)) = -\int |\nabla W(R_1,R_2,|x|)|^2 \, \rmd\sfm = -2\pi \int_{0}^{R_2} r(W'(R_1,R_2,r))^2\,\rmd r,
\end{equation*}
and a straightforward computation yields 
\begin{equation*} 
J_{f_{(R_1,R_2)}}(W(R_1,R_2,|x|)) = \frac{\pi}{8}\left(3R_2^4-4R_2^4\ln(R_2/R_1)-4R_1^2R_2^2\right).
\end{equation*} 

There exists $R\in(5,5.1)$ such that the energy of $W(4,16,|x|)$ equals that of $W(R,17,|x|)$. Let $\mu_1=\mu_{(4,16)}$ and $\mu_2=\mu_{(R,17)}$ for this value of $R$. 
We now show, admittedly somewhat tediously, that there is no two-phase quadrature domain corresponding to $(\mu_1,\mu_2)$. To see this, note that the only candidates for a two-phase quadrature domain $(v_1,v_2)$ have the following form (with $r=\abs{x}$): 
\begin{equation*} 
v_1(r)=\left\{\begin{aligned}
&0 && r <r_1 && (\text{omitted if $r_1=0$}),\\
&\frac{r^2}{4}+d_1\ln r + d_2 && r_1 \le r < 4 && (\text{the term $d_1\ln r$ is omitted if $r_1=0$}), \\
&\frac{r^2}{4}+d_3 \ln r + d_4 && 4 \le r < r_2,\\
& 0 && r \ge r_2
\end{aligned}\right. 
\end{equation*} 
and 
\begin{equation*}
v_2(r)=\left\{\begin{aligned}
& 0 && r <r_2,\\
& \frac{r^2}{4}+d_5\ln r + d_6 && r_2 \le r < R,\\
& \frac{r^2}{4}+d_7 \ln r + d_8 && R \le r < r_3,\\
& 0 && r \ge r_3,
\end{aligned}\right. 
\end{equation*}
where $0 \le r_1 <4 < r_2 < R < r_3 \le 17$ (the upper bound $r_3 \le 17$ follows since a two-phase quadrature domain must lie within the corresponding one-phase quadrature domains), and $d_1,d_2,\cdots,d_8$ are constants. 

In the case $r_1>0$, we obtain $11$ equations that must be satisfied: 
\begin{subequations} 
\begin{align}
& \frac{r_1^2}{4}+d_1\ln r_1 +d_2=0 &&  \because\text{\footnotesize continuity of $v_1$ in $r_1$ (if $r_1>0$)},\label{eq:1} \\ 
&  \frac{r_1}{2}+\frac{d_1}{r_1}=0 && \because\text{\footnotesize $(v_1)_+'(r_1)=0$ (if $r_1>0$)},\label{eq:2} \\
&  d_3\ln 4+d_4-d_1\ln 4-d_2=0 && \because\text{\footnotesize continuity of $v_1$ at $r=4$},\label{eq:3} \\
& \frac{r_2^2}{4}+d_3\ln r_2 +d_4=0 && \because\text{\footnotesize continuity of $v_1$ at $r=r_2$},\label{eq:4}\\
&  \frac{r_2^2}{4}+d_5\ln r_2 +d_6=0 && \because\text{\footnotesize continuity of $v_2$ at $r=r_2$},\label{eq:5}\\
&  d_7\ln R+d_8-d_5\ln R -d_6=0 && \because\text{\footnotesize continuity of $v_2$ at $r=R$},\\
&\frac{r_3^2}{4}+d_7\ln r_3 +d_8=0 && \because\text{\footnotesize continuity of $v_2$ at $r=r_3$},\\
& \frac{r_3}{2}+\frac{d_7}{r_3}=0 && \because\text{\footnotesize $(v_2)_-'(r_3)=0$}, \label{eq:8}\\
& d_3-d_1=-16^2/2 && \because\text{\footnotesize jump condition at $r=4$ to get $\mu_1$}, \label{eq:9}\\
& d_3+d_5=-r_2^2 && \because\text{\footnotesize $(v_2)'_+(r_2)+(v_1)'_-(r_2)=0$},\\
& d_7-d_5= -17^2/2 && \because\text{\footnotesize jump condition at $r=R$ to get $\mu_2$}. \label{eq:11}
\end{align}
\end{subequations} 

Restricting attention to equations \eqref{eq:1}--\eqref{eq:4} for given $r_1,r_2,r_3$, we obtain unique values for $d_1,d_2,d_3,d_4$:
\begin{equation*} 
d_1 = -\frac{r_1^2}{2}, \quad d_2 = \frac{r_1^2}{2}\ln r_1-\frac{r_1^2}{4}, \quad d_3 = \frac{2r_1^2\ln(4/r_1)+r_1^2-r_2^2}{4\ln(r_2/4)}, \quad d_4 = - \frac{r_2^{2}}{4} -d_3\ln r_2. 
\end{equation*} 
Similarly, restricting attention to equations \eqref{eq:5}--\eqref{eq:8} for given $r_1,r_2,r_3$, we obtain unique values for $d_5,d_6,d_7,d_8$:
\begin{equation*}
d_7 = -\frac{r_3^2}{2}, \quad d_8 = \frac{r_3^2}{2}\ln r_3 - \frac{r_3^2}{4}, \quad d_5 = \frac{2\ln(r_3/R)+r_{2}^{2}-r_{3}^{2}}{4\ln(R/r_2)}, \quad d_6 = -\frac{r_2^2}{4}-d_5\ln r_2. 
\end{equation*} 
Substituting $d_1$ and $d_3$ into \eqref{eq:9} produces 
\begin{equation*}
2r_1^2\ln(r_2/r_1)+r_1^2-r_2^2=-512\ln(r_2/4). 
\end{equation*}
Since $r_1<4<r_2$, we in particular have $r_2^2-512\ln(r_2/4)>0$, and a simple analysis shows that this forces $4<r_2<4.2$.
Moreover, because $(v_1,v_2)$ corresponds to a two-phase quadrature domain by assumption, this yields the following matching of areas with respect to the measures $\mu_1$ and $\mu_2$: 
\begin{equation*} 
\pi(r_3^2-r_2^2) - \pi(r_2^2-r_1^2)= \|\mu_2\|-\|\mu_1\| =\pi \cdot 17^2-\pi \cdot 16^2=33\pi.
\end{equation*} 
Since $r_2<4.2$, we obtain 
\begin{equation*} 
r_3^2 \le 33 + 2 \cdot r_2^2 < 33+2 \cdot 4.2^2 =68.28,
\end{equation*} 
which gives $r_3<8.3.$
Substituting $d_5$ and $d_7$ into \eqref{eq:11} produces 
\begin{equation*} 
-2r_3^2\ln(r_3/R)-2r_3^2\ln(R/r_2)+r_3^2-r_2^2 = -17^2 \cdot 2 \ln(R/r_2)
\end{equation*} 
which is equivalently written as 
\begin{equation*} 
-2r_3^2 \ln(r_3/r_2)+r_3^2-r_2^2+578\ln(R/r_2) =0.
\end{equation*} 
However, using estimates $r_2 < 4.2$, $5<r_3 < 8.3$ and $R>5$, the left-hand side admits the lower bound 
\begin{equation*} 
-2 \cdot 8.3^2 \ln(8.3/4)+5^2-4.2^2+578 \ln(5/4.2)\approx 7.6>0
\end{equation*} 
so there is no solution to the system. 

Finally, consider the case $r_1=0$. Then $d_1=0$, so equations \eqref{eq:1} and \eqref{eq:2} are omitted, while equations \eqref{eq:3}--\eqref{eq:11} remain the same with $d_1=0$ throughout. We need a slightly different argument here. First, the area comparison 
\begin{equation*} 
\pi(r_{3}^{2}-2r_{2}^{2})= \pi(r_3^2-r_2^2) - \pi r_2^2= \|\mu_2\|-\|\mu_1\| =\pi \cdot 17^2-\pi \cdot 16^2=33\pi 
\end{equation*} 
now yields  
\begin{equation*} 
r_3^2=33+2r_2^2.
\end{equation*} 
Hence, equation \eqref{eq:11} now gives 
\begin{align*}
d_7-d_5&=-\frac{33+2r_2^2}{2} - \frac{r_2^2-33-2r_2^2+2\ln(\sqrt{33+2r_2^2}/R_1)}{4\ln(R/r_2)}= -\frac{17^2}{2}\\
\iff &
\frac{33+r_2^2-\ln((33+2r_2^2)/R^2)}{4\ln(R/r_2)}=\frac{33+2r_2^2-17^2}{2}\\
\iff &  33+r_2^2-\ln((33+2r_2^2)/R^2) + 2(33+2r_2^2-17^2)\ln(r_2/R)=0\\
\iff & \ln(r_2/R) = \frac{33+r_2^2-\ln((33+2r_2^2)/R^2)}{512-4r_2^2}.
\end{align*}
However, this implies, since $4<r_2<R$ and $5<R<5.1$, that 
\begin{equation*} 
\ln(r_2/R) \ge \frac{33+4^2-\ln((33+2 \cdot R^2)/R^2)}{512-4 \cdot 4^2} \ge \frac{33+4^2-\ln(2 +33/5.1^2)}{512-4 \cdot 4^2} \ge 0.1 
\end{equation*} 
which in turn gives
\begin{equation*} 
r_2/R \ge {\rm e}^{0.1} \ge 1.1,
\end{equation*} 
contradicting $r_2 \le R$. Hence, no solution exists under these constraints. 

Consequently, in light of \Cref{lem:improvement-prop1-AS16},  the only candidates for minimizers $(u_1,u_2) \in \mS_2$ of the energy functional 
\begin{equation*} 
\mJ_{\bff}(u_1,u_2)=J_{f_1}(u_1)+J_{f_2}(u_2)
\end{equation*}  
are $(W(4,16,|x|),0)$ and $(0,W(R_1,17,|x|)$. By construction, both candidates attain the same energy, so the minimizer in this case is not unique.  
\end{example}

\section{Proof of \texorpdfstring{\Cref{thm:main-1-rough}}{Theorem \ref{thm:main-1-rough}} \label{sec:result1-proof-ts}}
Fix distinct indices $i$ and $j$.
By \Cref{lem:improvement-prop1-AS16}, the distribution $\Delta\left(\sum_{\ell\neq j}u_{*,\ell}-u_{*,j}\right)$ is a signed Radon measure in $\mR^{n}$, and  
\begin{equation*}
-\Delta(u_{*,i}-u_{*,j}) \le f_{i}\chi_{\{u_{*,i}>0\}} - f_{j}\chi_{\{u_{*,j}>0\}} \quad \text{in $\mR^{n}\setminus\bigcup_{\ell\neq i,j}\overline{\{u_{*,\ell}>0\}}$.} 
\end{equation*}
Interchanging the roles of $i$ and $j$ yields
\begin{equation*}
-\Delta(u_{*,i}-u_{*,j}) = f_{i}\chi_{\{u_{*,i}>0\}} - f_{j}\chi_{\{u_{*,j}>0\}} \quad \text{in $\mR^{n}\setminus\bigcup_{\ell\neq i,j}\overline{\{u_{*,\ell}>0\}}$.} 
\end{equation*}
In particular 
\begin{equation}
-\Delta u_{*,i} = f_{i}\chi_{\{u_{*,i}>0\}} \quad \text{in $\mR^{n}\setminus\bigcup_{\ell\neq i}\overline{\{u_{*,\ell}>0\}}$.} \label{eq:m-phase-PDE-TS}
\end{equation}

\begin{theorem}\label{thm:main-1-ts1}
    Suppose $\mu_1,\mu_2,\ldots,\mu_m$ are such that 
    \begin{equation*}
        \supp(\mu_i) \cap \overline{\omega(\mu_j)}= \emptyset \textrm{ for all } i \ne j 
    \end{equation*}
    and
    \begin{equation*}
        \supp(\mu_i) \subset \omega_{D_i}(\mu_i) \textrm{ for all } i, 
    \end{equation*}
    where $D_i= \mR^n \setminus \overline{\cup_{l \ne i} \omega(\mu_l)}$.
    Then there is a $m$-phase quadrature domain with respect to $\mu_1,\mu_2,\cdots,\mu_m$.
\end{theorem}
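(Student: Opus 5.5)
First reduce to $L^\infty$ densities, then minimize $\mJ_{\bff}$ over $\mS_m$ with $f_i=\mu_i-1$, and show the minimizer satisfies the support condition of \Cref{cor:sufficient-mQD}.

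\textbf{Reduction.} By \Cref{lem:stability-mollification}, applied with $D=D_i$, we may replace each $\mu_i$ by a mollification $\mu_i^{\epsilon}$. For small $\epsilon$ this leaves $\omega(\mu_i)$, $D_i$ and $\omega_{D_i}(\mu_i)$ unchanged. It also keeps both hypotheses: the supports move by at most $\epsilon$, and the hypotheses are open conditions on compact sets. The conclusion transfers back by \Cref{lem:average-L-infty}. So assume each $\mu_i\in L^\infty$.

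\textbf{Minimizer and its location.} Let $\bfu_*$ be a minimizer of $\mJ_{\bff}$ over $\mS_m$; it exists by \cite[Theorem~1]{AS16MultiPhaseQD}. Take its continuous representative from \Cref{cor:cont-min-Sm}, and set $Q_i=\{u_{*,i}>0\}$. By \eqref{eq:monotonicity-support}, $u_{*,\ell}\le W^{\mu_\ell}$, so $Q_\ell\subset\omega(\mu_\ell)$. Hence
\[
D_i\subset \mR^n\setminus \overline{\textstyle\bigcup_{\ell\ne i}Q_\ell}=:E_i .
\]
By \eqref{eq:m-phase-PDE-TS}, $-\Delta u_{*,i}=f_i\chi_{Q_i}$ in $E_i$. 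The hypothesis $\supp(\mu_i)\cap\overline{\omega(\mu_j)}=\emptyset$ gives $\supp(\mu_i)\subset D_i$, so $\mu_i$ lives inside $E_i$.

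\textbf{Comparison with partial balayage (main step).} I want to show $u_{*,i}\ge W^{\mu_i}_{D_i}$. If so, then
\[
\supp(\mu_i)\subset\omega_{D_i}(\mu_i)\subset Q_i ,
\]
and \Cref{cor:sufficient-mQD} finishes the proof. The plan is to use the minimality of $W^{\mu_i}_{D_i}$ among functions $w\ge0$ with $-\Delta w\ge\mu_i-1$ in $D_i$, see \eqref{eq:obstacle-problem}. This requires two facts about $u_{*,i}$ on $D_i$:
\begin{itemize}
\item[(1)] $-\Delta u_{*,i}\ge \mu_i-1$ on all of $D_i$, including points of $D_i$ where $u_{*,i}=0$;
\item[(2)] $\mu_i\le 1$ a.e.\ on $D_i\setminus Q_i$, so that $f_i\chi_{Q_i}\ge f_i$ there.
\end{itemize}
Fact (2) is exactly \Cref{lem:support-of-measure-minimizer}, since $D_i\setminus Q_i$ avoids $\overline{\bigcup_{\ell\ne i}Q_\ell}\cup Q_i$. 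For fact (1), the equation $-\Delta u_{*,i}=f_i\chi_{Q_i}$ holds distributionally on the open set $E_i\supset D_i$, and combined with (2) it gives (1). The function $u_{*,i}$ is not required to vanish outside $D_i$, but that is not needed: \eqref{eq:obstacle-problem} only requires $w\ge0$ in $\mR^n$.

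\textbf{Expected obstacle.} The delicate point is that the smallest-supersolution characterization in \eqref{eq:obstacle-problem} really applies to $u_{*,i}$, which is only continuous and in $H^1$. I would handle this with the energy comparison \eqref{eq:minimizerJf} on $\mK(D_i)$ instead. Test $J_{f_i}$ at $\min(u_{*,i},W^{\mu_i}_{D_i})$ against $W^{\mu_i}_{D_i}$, and use the standard lattice argument: for a supersolution $u$ and the minimizer $W$, the function $\min(u,W)$ is a competitor with energy at most that of $W$. Uniqueness of the minimizer then gives $\min(u_{*,i},W)=W$, that is, $u_{*,i}\ge W^{\mu_i}_{D_i}$.
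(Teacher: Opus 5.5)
Your proposal is correct and follows the paper's proof essentially step for step. You take a minimizer over $\mS_m$, use \eqref{eq:monotonicity-support} to place $D_i$ inside the region where \eqref{eq:m-phase-PDE-TS} holds, combine this with \Cref{lem:support-of-measure-minimizer} to get $-\Delta u_{*,i}\ge \mu_i-1$ in $D_i$, deduce $u_{*,i}\ge W^{\mu_i}_{D_i}$, and finish with \Cref{cor:sufficient-mQD}. The step you flag as the main obstacle is not one in the paper: it simply notes that $U^{\mu_i}-u_{*,i}$ lies in the distributional class $\mF_{D_i}(\mu_i)$, so it is bounded by the largest element $V^{\mu_i}_{D_i}$. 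Your energy and lattice comparison on $\mK(D_i)$ is also valid, but it is not needed.
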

\begin{proof}
By \Cref{lem:average-L-infty} above we may without loss of generality assume that the measures all belong to $L^{\infty}(\mR^n)$.
Let $\bfu=(u_{*,1},u_{*,2},\cdots,u_{*,m})$ be a minimizer of $\mJ_{\bff}$ over $\mS_m$.
For any given $i$ we note that by assumption $\mu_i$ has compact support in $D_i$. 
So in light of \Cref{lem:support-of-measure-minimizer} we have $\mu_i \leq 1$ in $D_i \cap \{u_{*,i}=0\}$.
By (\ref{eq:m-phase-PDE-TS}) it now follows that 
\begin{equation*}
        -\Delta \left(U^{\mu_i}-u_{*,i}\right) \leq 1 \textrm{ in } D_i.
\end{equation*}
Hence $U^{\mu_i}-u_{*,i} \in \mF_{D_i}(\mu)$, so we actually have
\begin{equation*}
   u_{*,i} \ge W_{D_i}^{\mu_i}.
\end{equation*}
So by assumption $\supp{(\mu_i)} \subset \omega_{D_i}(\mu_i) \subset \{u_{*,i}>0\}$. From this the result is an immediate consequence of \Cref{cor:sufficient-mQD} above.
\end{proof}

Note that in the proof above we actually showed that in this situation any minimizer $\bfu$ of $\mJ_{\bff}$ over $\mS_m$ is a $m$-phase quadrature domain. So, in particular, in this situation the minimizer over $\mS_m$ is unique.

The following is now a more or less immediate consequence of \Cref{lem:concentration-condition-one-phase} and the above theorem.
\begin{corollary}\label{cor:mainthm-1}
    Suppose $\mu_1,\mu_2,\ldots,\mu_m$ are such that 
    \begin{equation*}
        \supp(\mu_i) \cap \overline{\omega(\mu_j)}= \emptyset \textrm{ for all } i \ne j 
    \end{equation*}
    and each measure satisfies the concentration condition
    \begin{equation*}
        \limsup_{r \to 0_+} \frac{\mu_i(B_r(x))}{\sfm(B_r)} > 2^n \textrm{ for all } x \in \supp{(\mu_i)}.
    \end{equation*}
    Then there is a $m$-phase quadrature domain with respect to $\mu_1,\mu_2,\cdots,\mu_m$.
\end{corollary}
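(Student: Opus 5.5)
The plan is to reduce \Cref{cor:mainthm-1} directly to \Cref{thm:main-1-ts1}. The first hypothesis of that theorem, $\supp(\mu_i)\cap\overline{\omega(\mu_j)}=\emptyset$ for $i\ne j$, is assumed verbatim. So the only thing to check is the second hypothesis:
\[
\supp(\mu_i)\subset\omega_{D_i}(\mu_i), \qquad D_i=\mR^n\setminus\overline{\textstyle\bigcup_{\ell\ne i}\omega(\mu_\ell)}.
\]

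To verify it, I will use \Cref{lem:concentration-condition-one-phase}\ref{itm:b}. It says that under the concentration condition, $\supp(\mu)\subset\omega_D(\mu)$ for \emph{any} open set $D$ containing $\supp(\mu)$. It therefore suffices to show two things.

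First, $D_i$ is open. This is clear, since it is the complement of a closed set.

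Second, $\supp(\mu_i)\subset D_i$. Each $\omega(\mu_\ell)$ is bounded (\Cref{sec:partial-balayage}), and there are only finitely many indices $\ell$. Hence the closure of the finite union is the finite union of the closures:
\[
\overline{\textstyle\bigcup_{\ell\ne i}\omega(\mu_\ell)}=\textstyle\bigcup_{\ell\ne i}\overline{\omega(\mu_\ell)}.
\]
By assumption $\supp(\mu_i)$ is disjoint from each $\overline{\omega(\mu_\ell)}$ with $\ell\ne i$, so $\supp(\mu_i)\subset D_i$.

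Applying \Cref{lem:concentration-condition-one-phase}\ref{itm:b} with $\mu=\mu_i$ and $D=D_i$ then gives $\supp(\mu_i)\subset\omega_{D_i}(\mu_i)$ for every $i$. \Cref{thm:main-1-ts1} now yields the $m$-phase quadrature domain.

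I do not expect a real obstacle. The only points needing care are that the closure commutes with the finite union and that $D_i$ really is open and contains the support. The substantive work, namely the $L^\infty$ reduction via \Cref{lem:average-L-infty} and the minimization argument, is already done inside \Cref{thm:main-1-ts1}.
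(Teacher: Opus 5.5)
Your proposal is correct and is exactly the paper's argument: \Cref{lem:concentration-condition-one-phase}\ref{itm:b} applied with $D=D_i$ gives $\supp(\mu_i)\subset\omega_{D_i}(\mu_i)$, since $D_i$ is open and contains $\supp(\mu_i)$ by the disjointness hypothesis, and then \Cref{thm:main-1-ts1} applies. The appeal to boundedness of $\omega(\mu_\ell)$ is unnecessary, because closure commutes with finite unions in any topological space.
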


\Cref{thm:main-1-rough} now follows directly, since $\omega_i$ exists by \Cref{lem:concentration-condition-one-phase} and $\omega_i=\omega(\mu_i)$ necessarily, since this is the only strong one-phase quadrature domain for subharmonic functions with respect to $\mu_i$.

\section{Proof of \texorpdfstring{\Cref{thm:main-2}}{Theorem \ref{thm:main-2}}} 

Throughout this section we let $\bff=(f_{1},\cdots,f_{m})$ be defined as in \eqref{eq:special-choice-f}, i.e., 
\begin{equation*}
f_{j} = \mu_{j} - 1 \quad \text{for all $j=1,\cdots,m$,} 
\end{equation*} 
where, until the proof of \Cref{thm:main-2} at the end of the section, we assume that $\mu_1,\mu_2,\cdots,\mu_m \in L^{\infty}(\mR^n)$ with compact and disjoint supports.
We have already seen the importance of minimizers over the class $\mS_{m,\bmmu}$ in connection with the uniqueness theorem. These will also play a crucial role to get existence results, where we necessarily need additional assumptions on the involved measures $\mu_1,\mu_2,\dots,\mu_m$. We start this section by studying general properties of potential minimizers of $\mJ_{\bff}$ over $\mS_{m,\bmmu}$, and then proceed to state sufficient conditions for the existence of such minimizers and $m$-phase quadrature domains.  

There will always be a minimizing sequence in $\mS_{m,\bmmu}$ which, after passing to a suitable subsequence, converges weakly in $\mS_m$. However, the limit $(u_1,u_2,\cdots,u_m)$ may fail to satisfy $\mu_j(\{u_j=0\})=0$, and therefore may not belong to $\mS_{m,\bmmu}$. So in general we don't have existence of a minimizer over $\mS_{m,\bmmu}$. Furthermore, even if such a minimizer $(u_{*,1},u_{*,2},\cdots,u_{*,m}) \in \mS_{m,\bmmu}$ exists it need not minimize $\mJ_{\bff}$ over the larger set $\mS_{m}$, and in particular not satisfy \eqref{eq:Euler-Lagrange-general} in all of $\mR^n$.

Instead, a minimizer of $\mJ_{\bff}$ over $\mS_{m,\bmmu}$, if it exists, in general satisfies only the following version of the PDE inequality:

\begin{lemma}\label{lem:improvement-prop1-AS16-SMMU}
 If $\bfu_{*}=(u_{*,1},\cdots,u_{*,m})$ is a minimizer of $\mJ_{\bff}$ over $\mS_{m,\bmmu}$, then for each $j=1,\cdots,m$ we have 
\begin{equation}
-\Delta\left(\sum_{\ell\neq j}u_{*,\ell}-u_{*,j}\right) \le \sum_{i\neq j}f_{i}\chi_{\{u_{*,i}>0\}} - f_{j}\chi_{\{u_{*,j}>0\}} \quad \text{in $\left(\mR^{n} \setminus \bigcup_{\ell \ne j} \supp\,(\mu_\ell)\right)$,} \label{eq:Euler-Lagrange-general-smmu}
\end{equation}
and
\[-\Delta u_{*,j}=f_j \textrm{ in } \{u_{*,j}>0\}.\]
\end{lemma}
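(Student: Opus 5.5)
We will adapt the variational argument of \Cref{lem:improvement-prop1-AS16} and check that the competitors it uses stay inside the smaller class $\mS_{m,\bmmu}$. Fix $j$ and a non-negative $\psi\in C_c^\infty(\mR^n\setminus\bigcup_{\ell\ne j}\supp(\mu_\ell))$. For $\epsilon>0$ we take the same competitor $\bfz$ as in the proof of \Cref{lem:improvement-prop1-AS16}: $z_j=(\sum_{\ell\ne j}u_{*,\ell}-u_{*,j}-\epsilon\psi)^-$ and $z_\ell=(u_{*,\ell}-u_{*,j}-\epsilon\psi)^+$ for $\ell\neq j$. The computation of $I_1+I_2$ and the limit $\epsilon\to0_+$ are purely algebraic, so they carry over verbatim once we know $\bfz\in\mS_{m,\bmmu}$. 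They give the distributional inequality tested against such $\psi$, with the extra term $-f_j\chi_{\cap_\ell\{u_{*,\ell}=0\}}$. As in that proof, the left-hand side is then a measure which is singular on $\bigcap_\ell\{u_{*,\ell}=0\}$, so this extra term can be removed locally in the open set $\mR^n\setminus\bigcup_{\ell\ne j}\supp(\mu_\ell)$. This gives \eqref{eq:Euler-Lagrange-general-smmu}.

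The key step, and the only genuinely new one, is membership of $\bfz$ in $\mS_{m,\bmmu}$. Segregation and non-negativity are exactly as before, so only the condition $\mu_i(\{z_i=0\})=0$ needs checking.
\begin{itemize}
\item For $i=j$: we have $z_j\ge u_{*,j}$, since $z_j=u_{*,j}+\epsilon\psi$ wherever $u_{*,j}>0$. Hence $\{z_j=0\}\subset\{u_{*,j}=0\}$ and $\mu_j(\{z_j=0\})=0$.
\item For $\ell\ne j$: on $\supp(\mu_\ell)$ we have $\psi=0$, so $z_\ell=(u_{*,\ell}-u_{*,j})^+=u_{*,\ell}$ there, using segregation. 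Thus $\{z_\ell=0\}\cap\supp(\mu_\ell)=\{u_{*,\ell}=0\}\cap\supp(\mu_\ell)$, which is $\mu_\ell$-null.
\end{itemize}
This is precisely why the test functions must vanish near $\supp(\mu_\ell)$, $\ell\neq j$. I expect the main care to lie here, in handling representatives so that $\{z_\ell=0\}$ is meant pointwise in the same sense as in the definition of $\mS_{m,\bmmu}$; quasicontinuous representatives as in \Cref{rem:quasicont-rep} should suffice.

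For the equation $-\Delta u_{*,j}=f_j$ in $\{u_{*,j}>0\}$, we first need this set to be open. One route is to use \eqref{eq:Euler-Lagrange-general-smmu} as in \Cref{cor:cont-min-Sm} to obtain an LSC representative off $\bigcup_{\ell\ne j}\supp(\mu_\ell)$, noting that $\{u_{*,j}>0\}$ avoids $\supp(\mu_\ell)$ up to null sets. Alternatively, we perturb directly: $u_{*,j}\pm\epsilon\psi$ with $\psi\in C_c^\infty$ supported where $u_{*,j}\ge c>0$, and $\epsilon$ small. These perturbations keep positivity, segregation and the $\mu$-condition, since $\{u_{*,j}\pm\epsilon\psi=0\}=\{u_{*,j}=0\}$. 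They give $\int\nabla u_{*,j}\cdot\nabla\psi\,\rmd\sfm=\int f_j\psi\,\rmd\sfm$, i.e.\ the equation on $\{u_{*,j}>c\}^\circ$ for every $c>0$. With continuity of $u_{*,j}$ inside $\{u_{*,j}>0\}$, these sets exhaust $\{u_{*,j}>0\}$. If continuity is not yet available, we instead combine the two one-sided inequalities obtained from \eqref{eq:Euler-Lagrange-general-smmu} with the roles of $j$ and $\ell$ exchanged, exactly as in \Cref{cor:cont-min-Sm}, to deduce equality there.
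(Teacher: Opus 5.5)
Your derivation of \eqref{eq:Euler-Lagrange-general-smmu} matches the paper's. You use the same competitor $\bfz$ with $0\le\psi\in C_c^\infty(\mR^n\setminus\bigcup_{\ell\ne j}\supp(\mu_\ell))$. You then check that $\bfz\in\mS_{m,\bmmu}$; your verification that $z_j\ge u_{*,j}$ and that $z_\ell=u_{*,\ell}$ on $\supp(\mu_\ell)$ is exactly the point the paper leaves implicit. The rest of the proof of \Cref{lem:improvement-prop1-AS16} then carries over verbatim. Your two-sided perturbation $u_{*,j}\pm\epsilon\psi$ for the equation is also the paper's. The gap is in the last step, where you pass from the sets $\{u_{*,j}>c\}^\circ$ to all of $\{u_{*,j}>0\}$.

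Continuity is not needed there. You already have the LSC representative from \eqref{eq:Euler-Lagrange-general-smmu}, obtained as in \Cref{cor:cont-min-Sm} and set to $0$ on $\bigcup_{\ell\ne j}\supp(\mu_\ell)$. For this representative every superlevel set $\{u_{*,j}>c\}$ is open, so these sets exhaust $\{u_{*,j}>0\}$. Equivalently, for $\psi\in C_c^\infty(\{u_{*,j}>0\})$ the LSC function $u_{*,j}$ attains a positive minimum on $\supp\psi$, so $u_{*,j}\pm\epsilon\psi>0$ there for small $\epsilon$; this is how the paper concludes.

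Your fallback, by contrast, fails. The inequality \eqref{eq:Euler-Lagrange-general-smmu} for an index $k\ne j$ is only known in $\mR^n\setminus\bigcup_{\ell\ne k}\supp(\mu_\ell)$, and that set excludes $\supp(\mu_j)$. Exchanging roles as in \Cref{cor:cont-min-Sm} therefore gives $-\Delta u_{*,j}\le f_j$ only in $\{u_{*,j}>0\}\setminus\supp(\mu_j)$, where $f_j=-1$ a.e.\ anyway. It says nothing on $\supp(\mu_j)\cap\{u_{*,j}>0\}$, which is exactly where the equation carries information. The loss of this interchange argument in $\mS_{m,\bmmu}$ is why the direct perturbation is needed. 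Replace both the continuity remark and the fallback by the LSC observation above.
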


\begin{remark}
    Note that if we add the inequality and the equality above, then it follows that the only points where the inequality may fail is at points in $\partial \{u_{*,j}>0\} \cap \partial \bigcup_{\ell \ne j} \supp(\mu_{\ell})$, i.e. where $\{u_{*,j}>0\}$ reaches $\supp(\mu_{\ell})$ for some $\ell \ne j$, because the interior of the supports of $\supp(\mu_{\ell})$ is always a subset of $\{u_{*,\ell}>0\}$ basically by definition if $\bfu_* \in \mS_{m,\bmmu}$, at-least if we choose a suitable representative.
\end{remark}

\begin{proof}[Proof of \Cref{lem:improvement-prop1-AS16-SMMU}]
We note that in the proof of \Cref{lem:improvement-prop1-AS16}, if $\bfu_{*} \in \mS_{m,\bmmu}$ and $\psi$ is supported in $\mR^{n} \setminus \bigcup_{\ell \ne j} \supp\,(\mu_\ell)$, then the constructed function $(z_1,z_2,\cdots,z_m)$ also lies in $\mS_{m,\bmmu}$. The inequality in $\mR^{n} \setminus \bigcup_{\ell \ne j} \supp\,(\mu_\ell)$ then follows exactly as in the proof of \Cref{lem:improvement-prop1-AS16}.
Hence we have the result
\[-\Delta\left(\sum_{\ell\neq j}u_{*,\ell}-u_{*,j}\right) \le \sum_{i\neq j}f_{i}\chi_{\{u_{*,i}>0\}} - f_{j}\chi_{\{u_{*,j}>0\}} \quad \text{in $\mR^{n} \setminus \bigcup_{\ell \ne j} \supp\,(\mu_\ell)$.}\]

This allows us to refine the representative of a given minimizer. For, by an essentially identical argument as in the proof of \Cref{cor:cont-min-Sm}, there is a representative of $u_{*,j}$ which is LSC in $\mR^{n} \setminus \bigcup_{\ell \ne j} \supp\,(\mu_\ell)$. But since we have also $u_{*,j}=0$ in $\bigcup_{\ell \ne j} \supp\,(\mu_\ell)$ it follows that it is actually LSC in all of $\mR^n$ with this choice. So in particular the sets $\{u_{*,j}>0\}$ are open subsets of $\mR^{n} \setminus \bigcup_{\ell \ne j} \supp\,(\mu_\ell)$ with this choice of representatives.

Finally, now assume that $\psi \in C_c^{\infty}(\{u_{*,i}>0\})$ (not necessarily positive now), then for all $\epsilon>0$ so small that $u_{*,i}-\epsilon \psi >0$, $(z_1,z_2,\cdots,z_m)$ also belongs to $\mS_{m,\bmmu}$, and again the proof of the inequality
\[\int \nabla u_{*,j} \cdot \nabla \psi \rmd \sfm \ge \int_{\{u_{*,j}>0\}} f_j \psi \rmd \sfm\]
follows exactly as in the proof of \Cref{lem:improvement-prop1-AS16}. Since this may also be applied to $-\psi$ we get  equality above, which proves the final equality in the lemma.
\end{proof}

\begin{corollary} \label{cor:cont-min-Smmu}
     If $\bfu_{*}=(u_{*,1},\cdots,u_{*,m})$ is a minimizer of $\mJ_{\bff}$ over $\mS_{m,\bmmu}$, then it has a representative where each function $u_{*,j}$ is everywhere LSC, identically $0$ on $\bigcup_{\ell \ne j} \supp\,(\mu_\ell)$, and continuous in $\mR^{n} \setminus \left(\bigcup_{\ell \ne j} \supp\,(\mu_\ell) \cap \partial \{u_{*,j}>0\}\right) $. In particular $Q_j=\{u_{*,j}>0\}$ is an open subset of $\mR^n \setminus \bigcup_{\ell \ne j} \supp\,(\mu_{\ell})$, and $-\Delta u_{*,j}=f_j$ in $Q_j$.
\end{corollary}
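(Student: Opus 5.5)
The plan is to adapt the proof of \Cref{cor:cont-min-Sm} to the open set $G_j:=\mR^n\setminus\bigcup_{\ell\ne j}\supp(\mu_\ell)$, using \Cref{lem:improvement-prop1-AS16-SMMU} in place of \Cref{lem:improvement-prop1-AS16}.

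First, fix $j$ and denote by $F_j$ the right-hand side of \eqref{eq:Euler-Lagrange-general-smmu}. By \eqref{eq:monotonicity-support}, which holds also for elements of $\mS_{m,\bmmu}$ after comparison with $\bfv_*$ (or simply because a minimizer can be truncated by $v_{*,i}$ without increasing energy and without leaving $\mS_{m,\bmmu}$), all components are bounded with compact support. Hence $F_j\in L^\infty$ with compact support, and $U^{F_j}\in C^1_{\rm loc}$. Then $\eta_j:=\Delta(\sum_{\ell\ne j}u_{*,\ell}-u_{*,j}-U^{F_j})$ is a nonnegative measure on $G_j$. By the Riesz decomposition on $G_j$, we can write
\begin{equation*}
u_{*,j}-\sum_{\ell\ne j}u_{*,\ell}=U^{\eta_j|_{G_j}}-U^{F_j}-h_j \quad \text{a.e.\ in } G_j,
\end{equation*}
where $h_j$ is harmonic in $G_j$. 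The right-hand side is LSC in $G_j$, so $Q_j:=\{\text{RHS}>0\}$ is open in $G_j$. We take $u_{*,j}$ to be the positive part of the right-hand side in $G_j$ and set $u_{*,j}=0$ on $\bigcup_{\ell\ne j}\supp(\mu_\ell)$. This representative is LSC on $G_j$. It is also LSC at points of the closed set $\bigcup_{\ell\ne j}\supp(\mu_\ell)$, since it is nonnegative everywhere and vanishes there. Thus it is LSC on all of $\mR^n$, and $Q_j$ is an open subset of $\mR^n\setminus\bigcup_{\ell\ne j}\supp(\mu_\ell)$. This is exactly the step already sketched inside the proof of \Cref{lem:improvement-prop1-AS16-SMMU}. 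The equation $-\Delta u_{*,j}=f_j$ in $Q_j$ is the second part of that lemma.

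For continuity, we argue as in \Cref{cor:cont-min-Sm}. In $Q_j$, $u_{*,j}$ solves $-\Delta u=f_j$ with $f_j\in L^\infty$, so it is $C^1$ there. In the interior of $\{u_{*,j}=0\}$ continuity is trivial. It remains to treat points $y\in\partial Q_j$ with $y\notin\bigcup_{\ell\ne j}\supp(\mu_\ell)$, i.e.\ $y\in G_j$. Near such $y$ we work in a small ball $B\subset G_j$, where the full inequality \eqref{eq:Euler-Lagrange-general-smmu} holds for every index $j$ whose support condition allows it. We then repeat the Kato-inequality decomposition $w_i=U^{f_i|_{Q_i}}-U^{\gamma_i}$ locally (modulo harmonic functions in $B$) and the thinness argument. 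If $\mR^n\setminus Q_j$ were thin at $y$ with a positive limsup, then the other $Q_\ell$ have non-thin complements at $y$, so their contributions are continuous with value $0$. This contradicts LSC together with the vanishing of the combination at $y$.

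The main obstacle is that \eqref{eq:Euler-Lagrange-general-smmu} for index $\ell\ne j$ only holds off $\bigcup_{k\ne\ell}\supp(\mu_k)$, which may contain $\supp(\mu_j)$. The local argument at $y$ needs the relations for the other indices near $y$, and $y$ could lie in $\supp(\mu_j)$. I would handle this as follows. Near $y\in\supp(\mu_j)\cap G_j$, all $u_{*,\ell}$ with $\ell\ne j$ vanish on $\supp(\mu_j)$ by the same LSC-representative argument. So in the Kato step only the one-phase structure of $u_{*,j}$ matters, and we use the lemma's inequality for index $j$ alone. It suffices to show that $u_{*,j}+\sum_{\ell\ne j}(\text{bounded quasi-continuous pieces})$ cannot jump up at a thin boundary point; this follows from the LSC of $U^{\eta_j}-U^{F_j}-h_j$ as before. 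Points of $\partial Q_j\cap\bigcup_{\ell\ne j}\supp(\mu_\ell)$ are exactly the ones excluded from the continuity claim, so nothing is needed there.
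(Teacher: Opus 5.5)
Your proposal follows the paper's proof, which is only a short sketch of exactly your outline: take the LSC representative built in the proof of \Cref{lem:improvement-prop1-AS16-SMMU} (set to $0$ on $\bigcup_{\ell\ne j}\supp(\mu_\ell)$), read off $-\Delta u_{*,j}=f_j$ in $Q_j$ from that lemma, and rerun the thinness argument of \Cref{cor:cont-min-Sm} at points of $\partial Q_j\setminus\bigcup_{\ell\ne j}\supp(\mu_\ell)$, claiming nothing at the remaining boundary points. Your closing resolution of the ``main obstacle'' is too vague as written: quasi-continuity of the other components says nothing at the particular point $y$, and lower semicontinuity of $U^{\eta_j}-U^{F_j}-h_j$ cannot by itself exclude an upward jump; the obstacle disappears, however, if you work with phase $j$ alone: on a ball $B$ around $y$ with $\overline{B}\subset\mR^n\setminus\bigcup_{\ell\ne j}\supp(\mu_\ell)$, Kato's inequality gives $u_{*,j}=U^{f_j\chi_{Q_j\cap B}}-U^{\gamma_j|_B}+h_B$ a.e.\ in $B$ with $\gamma_j\ge 0$ and $h_B$ harmonic, both sides are finely continuous in $B$ and hence agree everywhere there, and since the right-hand side is USC, the LSC function $u_{*,j}$ is continuous at $y$.
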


\begin{proof}
Using the results from the proof of \Cref{lem:improvement-prop1-AS16-SMMU} above, it is clear that we still have $u_{*,j}=U^{f_j|_{Q_j}}- U^{\gamma_j}$ (q.e.) for each $u_{*,j}$, where $\gamma_j$ is supported on $\partial \{u_{*,j}>0\}$. Now the same type of argument as in \Cref{cor:cont-min-Sm} also works to show that if $y \in \partial \{u_{*,j}>0\} \setminus \bigcup_{\ell \ne j} \supp\,(\mu_\ell)$, then $u_{*,j}$ must be continuous in $y$.  (But at the points in $\bigcup_{\ell \ne j} \supp\,(\mu_\ell) \cap \partial \{u_{*,j}>0\}$ this argument fails, and $u_{*,j}$ could potentially be discontinuous at such points if also $\bigcup_{\ell \ne j} \supp\,(\mu_\ell)$ is thin.) 
\end{proof}

If we assume that there is a minimizer $(u_{*,1},u_{*,2},\cdots,u_{*,m})$  over $\mS_{m,\bmmu}$ and let $(v_{*,1},v_{*,2},\cdots,v_{*,m})$ denote the unique minimizer over $\mK^m$, then we would  get with $w_i=u_{*,i}-v_{*,i}$ that
$-\Delta w_i \le 0$ in $\{u_{*,i}>0\}$ (where we used that $\mu_i \le 1$ in $\{v_{*,i}=0\}$). And therefore by the maximum principle it follows that $u_{*,i} \le v_{*,i}$ always holds (just as for the minimizers over $\mS_m$), and in particular the sets $\{u_{*,i}>0\}$ are bounded.

In contrast to minimizers over $\mS_m$ we always have uniqueness of minimizers over $\mS_{m,\bmmu}$.

\begin{lemma}\label{lem:uniq-min-SMMU}
    There is at most one minimizer of $\mJ_{\bff}$ over $\mS_{m,\bmmu}$.
\end{lemma}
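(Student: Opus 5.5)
The plan is to reduce the lemma to the computation in the proof of \Cref{prop:uniqueness-procedure}. Suppose $\bfu_{*}$ and $\bfv$ are both minimizers of $\mJ_{\bff}$ over $\mS_{m,\bmmu}$, so $\mJ_{\bff}(\bfv)=\mJ_{\bff}(\bfu_{*})$, and write $v_i=u_{*,i}+g_i$.

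\Cref{prop:uniqueness-procedure} needs $\bfu_{*}\in\mS_{m,\bmmu}$ to satisfy \eqref{eq:minsmmu-cond-1} in all of $\mR^n$. \Cref{lem:improvement-prop1-AS16-SMMU} only gives this in $\mR^{n}\setminus\bigcup_{\ell\ne j}\supp(\mu_\ell)$. However, the proof of the proposition only uses \eqref{eq:computation3} with the single test function $\psi=g_i^+$. So the key step is a weak form of \eqref{eq:computation3}: it should hold for every $\psi\ge 0$ in $H^1(\mR^n)$ with compact support such that $\psi=0$ a.e. on $\{\mu_\ell>0\}$ for all $\ell\ne i$.

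To prove this, I repeat the variation from the proof of \Cref{lem:improvement-prop1-AS16} directly with such a $\psi$, without approximating by smooth functions. The competitor has components $z_\ell=(u_{*,\ell}-u_{*,i}-\epsilon\psi)^+$ for $\ell\ne i$ and $z_i=(\sum_{\ell\ne i}u_{*,\ell}-u_{*,i}-\epsilon\psi)^-$. It stays in $\mS_{m,\bmmu}$ for the following reasons.
\begin{itemize}
\item For $\ell\ne i$, a.e. on $\{\mu_\ell>0\}$ we have $u_{*,\ell}>0$ (because $\bfu_{*}\in\mS_{m,\bmmu}$), and there $u_{*,i}=0$ and $\psi=0$, so $z_\ell=u_{*,\ell}>0$.
\item The component $z_i$ is at least $u_{*,i}+\epsilon\psi\ge u_{*,i}$ on $\{u_{*,i}>0\}$, so $\mu_i(\{z_i=0\})=0$ still holds.
\end{itemize}
The expansions of $I_1$ and $I_2$ in that proof never used smoothness of $\psi$; they only used $\psi\in H^1$ and dominated convergence for the $\epsilon\to0$ limit. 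Hence they give \eqref{eq:computation3} for this $\psi$ (the extra term on $\bigcap_\ell\{u_{*,\ell}=0\}$ is handled as there).

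Next I check that $\psi=g_i^+$ is admissible. We have $g_i^+>0$ only where $v_i>0$. For $\ell\ne i$, $v_\ell>0$ a.e. on $\{\mu_\ell>0\}$ since $\bfv\in\mS_{m,\bmmu}$, and segregation then forces $v_i=0$ a.e. there. So $g_i^+=0$ a.e. on $\{\mu_\ell>0\}$ for every $\ell\ne i$. This is the same fact behind the third bullet in the proof of \Cref{prop:uniqueness-procedure}, namely $g_if_\ell=-g_i$ for $\ell\ne i$.

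The remaining ingredients of that proof are also available:
\begin{itemize}
\item Identity \eqref{eq:computation1} needs $-\Delta u_{*,i}=f_i$ in $\{u_{*,i}>0\}$, which is given by \Cref{lem:improvement-prop1-AS16-SMMU} and \Cref{cor:cont-min-Smmu}.
\item The algebraic bound pairing $|\nabla g_j|^2+|\nabla g_k|^2-2\nabla g_j\cdot\nabla g_k\ge0$ only uses segregation.
\end{itemize}
Therefore the argument yields
\[\mJ_{\bff}(\bfv)\ge\mJ_{\bff}(\bfu_{*})+\sum_i\int_{\{u_{*,i}=0\}}|\nabla g_i|^2\,\rmd\sfm .\]
Since the two energies are equal, the uniqueness part of that proof applies verbatim. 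First it gives $\{v_i>0\}\subset\{u_{*,i}>0\}$. Then $J_{f_i}(v_i)=J_{f_i}(u_{*,i})+\tfrac12\int|\nabla g_i|^2\,\rmd\sfm$, which forces $g_i=0$ for every $i$, i.e.\ $\bfv=\bfu_{*}$.

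The main obstacle is the extended variational inequality for the nonsmooth test function $g_i^+$ that vanishes only a.e. on $\{\mu_\ell>0\}$, as opposed to vanishing near $\supp(\mu_\ell)$. Two points need care. One is confirming that the constructed competitor genuinely lies in $\mS_{m,\bmmu}$, which uses the a.e. description of the $\mu_\ell$ (they have $L^\infty$ densities). The other is passing to the limit in the $\epsilon$-expansion with $\psi$ only in $H^1$ and compactly supported. Compact support holds because $v_i\le v_{*,i}$ has compact support, by the remark preceding the lemma.
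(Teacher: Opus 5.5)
Your route differs from the paper's at the one point that matters. The paper never varies with $g_i^+$ directly. It takes \eqref{eq:Euler-Lagrange-general-smmu} on the open set $O_i=\mR^n\setminus\bigcup_{\ell\ne i}\supp(\mu_\ell)$ and extends it by density to nonnegative $\psi\in H^1_0(O_i)$. It then uses \Cref{cor:cont-min-Smmu} for \emph{both} minimizers to get $\{u_{*,i}>0\}\cup\{v_i>0\}\subset O_i$, hence $g_i^+\in H^1_0(O_i)$. You instead redo the first variation with the nonsmooth $g_i^+$, requiring only that it vanish a.e.\ on $\{\mu_\ell>0\}$ for $\ell\ne i$. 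Three of your checks are correct:
\begin{itemize}
\item the competitor stays in $\mS_{m,\bmmu}$;
\item $g_i^+$ is admissible, using only $\bfv\in\mS_{m,\bmmu}$ and segregation;
\item the $\epsilon\to0$ expansion works for compactly supported $\psi\in H^1$.
\end{itemize}
What this buys is that you use only a.e.\ information about $\bfv$. You do not need openness of $\{v_i>0\}$ inside $O_i$, i.e.\ vanishing of $v_i$ on the whole topological support $\supp(\mu_\ell)$ rather than just a.e.\ on $\{\mu_\ell>0\}$.

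The gap is the parenthetical claim that the extra term on $Z=\bigcap_\ell\{u_{*,\ell}=0\}$ ``is handled as there''. Your variation gives \eqref{eq:computation3} with $-\int_Z f_i\psi\,\rmd\sfm$ added to the right-hand side. Since $\mu_i=0$ a.e.\ on $\{u_{*,i}=0\}\supset Z$, this term equals $\int_Z\psi\,\rmd\sfm\ge0$, so it has the bad sign and cannot be dropped. In \Cref{lem:improvement-prop1-AS16} it is removed only distributionally: the inequality for all nonnegative test functions on an open set shows that $-\Delta(\sum_{\ell\ne j}u_{*,\ell}-u_{*,j})$ is a measure there whose restriction to $Z$ is singular. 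Your admissible class contains all nonnegative test functions only on $O_i$. So that argument gives the clean inequality only for $\psi\in H^1_0(O_i)$, which is exactly the membership of $g_i^+$ your route was meant to avoid. This, not admissibility or the $\epsilon$-limit, is the delicate point of your approach.

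It can be repaired by carrying the term along.
\begin{itemize}
\item In the energy estimate it contributes $-2\int_Z g_i^+\,\rmd\sfm$. This is cancelled by part of the discarded term $-2\int_{\{u_{*,i}=0\}}f_ig_i^+\,\rmd\sfm=2\int_{\{u_{*,i}=0\}}g_i^+\,\rmd\sfm$.
\item The other discarded term, $2\sum_{\ell\ne i}\int_{\{u_{*,\ell}>0\}}g_i^+\,\rmd\sfm$ (from $f_\ell=-1$ there), survives.
\item Hence $\mJ_{\bff}(\bfv)\ge\mJ_{\bff}(\bfu_*)+\int E\,\rmd\sfm+2\sum_i\sum_{\ell\ne i}\int_{\{u_{*,\ell}>0\}}g_i^+\,\rmd\sfm$. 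Here $E=\sum_i|\nabla g_i|^2-2\sum_i\sum_{\ell\ne i}\chi_{\{u_{*,\ell}>0\}}\nabla g_i\cdot\nabla g_\ell\ge0$ a.e.\ by your pairing argument.
\end{itemize}
The uniqueness step is then not verbatim, because the strictness on $Z$ that the paper draws from $2\int_{\{u_{*,i}=0\}}g_i^+\,\rmd\sfm$ is used up. It is easy to redo:
\begin{itemize}
\item Equality forces $v_i=0$ a.e.\ on $\{u_{*,\ell}>0\}$ for $\ell\ne i$.
\item Hence $E=|\nabla g_j|^2$ a.e.\ on $\{u_{*,j}>0\}$, and $E=\sum_i|\nabla g_i|^2$ on $Z$.
\item So $E=0$ a.e.\ gives $\nabla g_i=0$ a.e.\ in $\mR^n$, and therefore $g_i=0$.
\end{itemize}
Do not lean on the displayed lower bound $\sum_i\int_{\{u_{*,i}=0\}}|\nabla g_i|^2$ that you copied. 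The pointwise computation does not give it: where $u_{*,j}>0$ and $v_k>0$ one has $E=|\nabla g_j-\nabla g_k|^2$, while the claimed bound has integrand $|\nabla g_k|^2$ there.
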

\begin{proof}
    This follows almost identically as the proof of \Cref{prop:uniqueness-procedure} above. Indeed,  if we in that proof  assume that $\bfu_*$ is a minimizer of $\mJ_{\bff}$ over $\mS_{m,\bmmu}$, then everything up to equation \eqref{eq:computation1} goes through. Furthermore, using \eqref{eq:Euler-Lagrange-general-smmu} above, also \eqref{eq:computation3} holds by approximation, but now only for all non-negative $\psi \in H^1_0 \left(\left(\mR^{n} \setminus \bigcup_{\ell \ne i} \supp\,(\mu_\ell)\right) \right)$. But since $\{g_i \ne 0\} \subset \{u_{*,i}>0\} \cup\{v_i>0\} \subset \mR^{n} \setminus \bigcup_{\ell \ne i} \supp\,(\mu_\ell)$ we see that $g_i^+$ belongs to this class, so the rest of the argument works the same way.
\end{proof}

The goal for the rest of the section is to give sufficient conditions to ensure that we have a minimizer of $\mJ_{\bff}$ over $\mS_{m,\bmmu}$, and furthermore such that each $\mu_j$ have compact support inside $\{u_j>0\}$.

First of all we need a way to reduce the study of minimizing sequences to special types. To do so we need to extend the definition of $W_D^{\mu}$ to bounded quasiopen $D$, for $\mu \in L^{\infty}(\mR^n)$. To do so note that by definition there is a decreasing sequence of bounded open sets $D^1,D^2,D^3,\cdots$ such that ${\rm Cap}(D^n \setminus D) \to 0$ as $n \to \infty$. We also know that $W_{D^n}^{\mu}$ is decreasing to some function $u$, which then by construction is $0$ q.e. in $\mR^n \setminus D$. It is easy to see that this function minimizes $J_f(v)$, where $f=\mu-1$, over all $v \in H^1_0(D)$, where the latter as usual is defined as all functions $v \in H^1(\mR^n)$ such that $v=0$ q.e. in $\mR^n \setminus D$. It is also clear that this does not depend on the choice of the sequence $D^n$. So we may now define $u=W_D^{\mu}$, and we use this definition in the following statement, where we assume that we have chosen quasicontinuous representatives of the involved functions. 

The following statement is now a more or less immediate consequence of \eqref{eq:minimizerJf} (and the above definition of $W_{D_j}^{\mu_j}$ in case $D_j$ is only quasiopen).

\begin{lemma}\label{lem:struct-of-min-Smmu} 
Let $(v_1,v_2,\cdots,v_m) \in \mS_{m,\bmmu}$ have compact support. If we for each $j=1,\cdots,m$ define $D_j=\{v_j>0\}$ and $u_j=W_{D_j}^{\mu_j}$, then $\mJ_{\bff}(\bfv) \ge \mJ_{\bff}(\bfu)$.
\end{lemma}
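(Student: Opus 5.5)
The functional $\mJ_{\bff}$ splits as $\sum_j J_{f_j}(v_j)$. The plan is to compare $\bfv$ with $\bfu$ componentwise, using the one-phase characterization \eqref{eq:minimizerJf} on each $D_j$. So the whole argument reduces to the componentwise inequality
\begin{equation*}
J_{f_j}(v_j)\ge J_{f_j}(u_j) \qquad \text{for each } j .
\end{equation*}

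Fix $j$. By construction $v_j\ge0$ and $v_j=0$ q.e. outside $D_j=\{v_j>0\}$, taking the quasicontinuous representative as in \Cref{rem:quasicont-rep}. Hence $v_j$ belongs to $\mK(D_j)$ in the quasiopen sense: it is a non-negative element of $H^1_0(D_j)$, the space of $H^1(\mR^n)$ functions vanishing q.e. off $D_j$.

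For bounded quasiopen $D_j$ (bounded because $\bfv$ has compact support), the function $W_{D_j}^{\mu_j}$ was defined above as the decreasing limit of $W_{D^k}^{\mu_j}$ along open $D^k\supset D_j$ with ${\rm Cap}(D^k\setminus D_j)\to0$. It was stated to minimize $J_{f_j}$ over $H^1_0(D_j)$. To be safe about positivity, I would re-derive this directly over non-negative functions.

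\begin{itemize}
\item For each $k$, $v_j\in\mK(D^k)$, so \eqref{eq:minimizerJf} gives $J_{f_j}(v_j)\ge J_{f_j}(W_{D^k}^{\mu_j})$.
\item The functions $W_{D^k}^{\mu_j}$ are monotone and bounded in $H^1$, since their energies are bounded by $J_{f_j}(0)=0$ and they are supported in a fixed bounded set. They therefore converge weakly in $H^1$ to $u_j$.
\item The gradient term is weakly lower semicontinuous. The linear term $\int f_j W_{D^k}^{\mu_j}\,\rmd\sfm$ converges by monotone or dominated convergence on a fixed compact set.
\item Hence $J_{f_j}(u_j)\le\liminf_k J_{f_j}(W_{D^k}^{\mu_j})\le J_{f_j}(v_j)$.
\end{itemize}

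Summing over $j$ gives $\mJ_{\bff}(\bfv)\ge\mJ_{\bff}(\bfu)$.

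The main obstacle is the quasiopen technicality: that such $D^k$ exist for quasiopen $D_j$, and that the limit $u_j$ is $0$ q.e. off $D_j$. The latter is not strictly needed for the inequality itself. It does matter for later use, namely that $\bfu$ remains segregated and lies in $\mS_{m,\bmmu}$, and I would remark on this.

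Segregation follows because $0\le u_j\le W_{D^k}^{\mu_j}$ for every $k$, which forces $u_j=0$ q.e. off $D_j$, and the sets $D_j$ are pairwise disjoint. Membership in $\mS_{m,\bmmu}$ uses the support condition on $\mu_j$, $\mu_j\in L^\infty$, together with the observation from \Cref{lem:concentration-condition-one-phase}-type reasoning that $u_j$ need not vanish where $\mu_j>0$. This last point is not claimed in the statement, so I would not pursue it here.
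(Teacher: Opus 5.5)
Your proof is correct and takes the paper's route: you split $\mJ_{\bff}$ into the one-phase functionals $J_{f_j}$, then compare each non-negative $v_j$ vanishing q.e.\ off $D_j$ with $W_{D_j}^{\mu_j}$ through \eqref{eq:minimizerJf} applied on the open approximants $D^k\supset D_j$, and your monotone/weak-$H^1$ limit argument (weak lower semicontinuity of the Dirichlet term, dominated convergence for the linear term) simply writes out the step the paper calls ``easy to see''. Your closing remark on membership in $\mS_{m,\bmmu}$ is not needed, and it is right to leave it aside, since the paper explicitly does not claim it in general (it holds, e.g., when the $D_j$ are open and the $\mu_j$ satisfy the concentration condition).
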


Note that we do not claim that $(u_1,u_2,\cdots,u_m)$ in general will belong to $\mS_{m,\bmmu}$ above. But note that if each $D_j$ is open and each measure satisfies the concentration condition in \Cref{lem:concentration-condition-one-phase}\ref{itm:b}, then it will do so automatically. To see this, for each $j=1,\cdots,m$, fix a nonempty compact set $K\subset D_{j}$ and define $\tilde{\mu}_j=\mu_j|_{K}$. 
Then $\tilde{\mu}_j$ satisfies the assumptions of  \Cref{lem:concentration-condition-one-phase}\ref{itm:b}. It follows that $\supp(\tilde{\mu}_j) \subset \omega_{D_j}(\tilde{\mu}_j) \subset \omega_{D_j}(\mu_j)$, and hence $\tilde{\mu}_j(\{u_j=0\})=0$. Since $K$ is arbitrary, the claim follows. 

We now impose the following assumption, which is actually as it turns out a sufficient conditions for the existence of a minimizer over $\mS_{m,\bmmu}$.

\begin{assumption}\label{assu:standing-assumption-point-mass-v2} 
There is a constant $c>1$ and for each $j=1,\cdots,m$, a nonempty bounded open set $A_{j}$ such that 
\begin{equation}\label{eq:conc-cond-Aj}
c\chi_{A_j}\le \mu_j \le \|\mu_j\|_{L^{\infty}(\mR^n)} \chi_{A_j}. 
\end{equation}
\end{assumption}

We then obtain the following result: 

\begin{lemma}\label{lem:ex-of-min-overSmmu}
Suppose \Cref{assu:standing-assumption-point-mass-v2} hold for some $c>1$. Then $\mJ_{\bff}$ admits a minimizer $\bfu_*$ over $\mS_{m,\bmmu}$. 
\end{lemma}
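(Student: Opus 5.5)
Under \Cref{assu:standing-assumption-point-mass-v2} the plan is a direct method in which the constraint $\mu_j(\{u_j=0\})=0$ survives weak limits. Since $\mu_j$ is bounded below by $c>1$ on the open set $A_j$ and vanishes outside $A_j$, the constraint becomes $\sfm(A_j\cap\{u_j=0\})=0$. The key observation is that minimizing sequences can be taken \emph{uniformly positive} on compact parts of $A_j$, so the limit still charges $A_j$ fully.

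\textbf{Step 1: reduction to regularized sequences.} Take a minimizing sequence $\bfv^k\in\mS_{m,\bmmu}$. First, truncate by $\min\{v^k_j,v_{*,j}\}$, where $\bfv_*$ is the minimizer over $\mK^m$ from \eqref{eq:boundedness}. This lowers the energy, since for the one-phase functional $J_{f}(\min\{v,v_*\})+J_f(\max\{v,v_*\})=J_f(v)+J_f(v_*)$ and $J_f(\max\{v,v_*\})\ge J_f(v_*)$. It also preserves segregation and the constraint, because $v_{*,j}>0$ on $A_j$ by \Cref{lem:concentration-condition-one-phase}, as $\mu_j\ge c\chi_{A_j}$ is concentrated there. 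Hence all supports lie in a fixed ball.

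Next, apply \Cref{lem:struct-of-min-Smmu}: replace $v^k_j$ by $u^k_j=W^{\mu_j}_{D^k_j}$ with $D^k_j=\{v^k_j>0\}$. Up to a null set, $D^k_j\supset A_j$, so $\mu_j$ is supported in $\overline{A_j}$ and sits inside $D^k_j$ a.e. Since $u^k_j\le W^{\mu_j}$ and $u^k_j=0$ q.e. off $D^k_j$, segregation is kept. The remaining issue is whether $\mu_j(\{u^k_j=0\})=0$. I would enlarge $D^k_j$ to $D^k_j\cup A_j$ only if needed; more simply, I would use a local comparison. On each ball $B\subset A_j$ (up to null sets $B\subset D^k_j$), one has $-\Delta u^k_j=\mu_j-1\ge c-1>0$ where $u^k_j>0$. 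A barrier argument then gives $u^k_j>0$ a.e. in $A_j$.

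\textbf{Step 2: uniform lower bound, the main obstacle.} I want, for every compact $K\subset A_j$, a constant $\delta_K>0$ with $u^k_j\ge\delta_K$ q.e. on $K$, uniformly in $k$. The approach is to compare $u^k_j$ with $W^{(c-1)\chi_{B_r(x)}}_{B_{2r}(x)}$-type barriers. Since $u^k_j\ge 0$ is a supersolution of $-\Delta u\ge c-1$ on the quasi-open set $D^k_j\cap A_j$, which has full measure in $A_j$, the needed comparison is $u^k_j\ge W^{c\chi_{B}}_{B'\cap D^k_j}$ for balls $B\subset B'\subset A_j$.

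The obstacle is that $D^k_j$ could be $A_j$ minus a thin set of positive capacity, such as a slit, where $u^k_j=0$ while $\mu_j$ does not charge it. I would try to show that such slits are energetically unfavourable: removing the slit, that is, replacing $D^k_j$ by $D^k_j\cup A_j$, is admissible. This works provided $A_j$ meets no $\{v^k_\ell>0\}$, which holds a.e. because $\mu_j(\{v^k_j=0\})=0$ forces $\sfm(A_j\cap\{v^k_\ell>0\})=0$. Removing the slit only decreases $J_{f_j}$ by \eqref{eq:minimizerJf}, since the admissible class grows. So I may assume $D^k_j\supset A_j$ up to a polar set. After modifying the other phases on the polar set via \Cref{rem:quasicont-rep}, the monotonicity \eqref{eq:monotonicity-partial-reduction} gives $u^k_j\ge W^{\mu_j}_{A_j}\ge W^{c\chi_{A_j}}_{A_j}>0$ in $A_j$. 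The last expression is independent of $k$.

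\textbf{Step 3: passage to the limit.} The $u^k$ are bounded in $(H^1)^m$ with uniformly bounded support. Extract a subsequence converging weakly in $H^1$ and strongly in $L^2$ and a.e. The limit stays nonnegative and segregated, since products converge a.e. The bound $u_j\ge W^{c\chi_{A_j}}_{A_j}>0$ a.e. on $A_j$ also passes to the limit, so $\mu_j(\{u_j=0\})=0$ and $\bfu\in\mS_{m,\bmmu}$. Finally, weak lower semicontinuity of the Dirichlet term, together with strong $L^2$ convergence on a fixed ball for the linear term ($f_j\in L^\infty$), shows that $\bfu$ is a minimizer.
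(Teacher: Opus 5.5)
Your proposal is correct and follows essentially the paper's route. You replace the minimizing sequence by $W^{\mu_j}_{D_j^k}$ via \Cref{lem:struct-of-min-Smmu}, bound these below on $A_j$ by the $k$-independent positive function $W^{c\chi_{A_j}}_{A_j}=(c-1)G_{A_j}^{\sfm|_{A_j}}$, and pass to a weak limit. Your truncation by $v_{*,j}$ and your enlargement of $D_j^k$ to $D_j^k\cup A_j$ usefully justify two points the paper takes for granted: the compact-support hypothesis of \Cref{lem:struct-of-min-Smmu}, and $A_j\subset D_j^k$ beyond a null set. One small correction: derive positivity of $v_{*,j}$ on $A_j$ from $W^{\mu_j}\ge W^{\mu_j}_{A_j}\ge (c-1)G_{A_j}^{\sfm|_{A_j}}$, not from \Cref{lem:concentration-condition-one-phase}, whose $2^n$ threshold need not hold when only $c>1$.
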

\begin{remark}
It may be worthwhile to note that the sets $D_j^k$ in the below proof may be only quasi-open, but the minimizer $\bfu_*$ will have a LSC representative as stated in \Cref{cor:cont-min-Smmu} above.
\end{remark}

\begin{proof}[Proof of \Cref{lem:ex-of-min-overSmmu}] 
Let $\{(v_1^k,v_2^k,\cdots,v_m^k)\}_{k\in\mN}$ be a minimizing sequence, and let $D_j^k=\{v_j^k>0\}$.
By assumption $A_j \subset D_j^k$ for each $k$. 
By the maximum principle, for each $j=1,\cdots,m$ and $k\in\mN$, we have 
\begin{equation*}
W_{D_j^k}^{\mu_j} \ge w_j=(c-1)G_{A_j}^{\sfm|_{A_j}}, 
\end{equation*}
where $G_{A_j}^{\sfm|_{A_j}}$ is the Green potential of the measure $\sfm|_{A_j}$ over $A_j$, i.e., 
\begin{equation*}
\Delta G_{A_j}^{\sfm|_{A_j}} =-1 \text{ in $A_{j}$}, \textrm{ and } G_{A_j}^{\sfm|_{A_j}}=0 \textrm{ q.e. in } {\mR^{n}\setminus A_{j}}, 
\end{equation*}
which exists since $A_j$ is a nonempty bounded open set (see, e.g., \cite[Chapter~4]{AG01Potential}). 
So if we define $\{(u_1^k,u_2^k,\cdots,u_m^k)\}_{k\in\mN}$ by letting $u_j^k=W_{D_j^k}^{\mu_j}$ as in \Cref{lem:struct-of-min-Smmu}, this sequence is also minimizing, and by the above it belongs to $\mS_{m,\bmmu}$. 
Note that $w_j$ is independent of $k$, and satisfies $w_j>0$ on $A_j$.
Consequently, any convergent subsequence of $u_j^k$ with limit $u_{*,j}$ satisfies $u_{*,j} \ge w_j$, which ensures $\mu_j(\{u_{*,j}=0\})=0$. Therefore, the limit $\bfu_*=(u_{*,1},u_{*,2},\cdots,u_{*,m})$ belongs to $\mS_{m,\bmmu}$. 
\end{proof}

\Cref{assu:standing-assumption-point-mass-v2} above is enough to guarantee existence of a minimizer, but to give conditions to actually get compact support of the measure $\mu_j$ inside $\{u_{*,j}>0\}$ more is needed. Indeed this would be the case already in the one-phase case. The goal below is to show that in case the sets $A_j$ at each boundary point satisfies an inner ball condition, and that the measure $\mu_j$ is sufficiently concentrated then this forces the closure of the ball to be inside the set $\{u_{*,j}>0\}$. In the one-phase situation it is more or less trivial to show that it is enough to assume $\mu_j>c>1$ on this ball. But to quantify what sufficiently concentrated means in the multiphase setting becomes much more involved, since this depends also on both the size and the location of the other measures $\mu_{\ell}$ for $\ell \ne j$.

\begin{theorem} \label{thm:fund-ex-thm1}
Let $\mu_{1},\cdots,\mu_{m}\in L^{\infty}(\mR^{n})$ be non-negative measures with compact and disjoint supports, which satisfies \Cref{assu:standing-assumption-point-mass-v2} for some $c>1$, and let $\bfu_*$ be the unique minimizer of $\mJ_{\bff}$ over $\mS_{m,\bmmu}$.
Furthermore let 
\[R_1>{\rm diam}\left(\bigcup_{i=1}^{m}\supp\,(\mu_{i})\right),\]
and  
\[M = \sup_i \mu_i(\mR^n).\]
Let $j\in\{1,\cdots,m\}$, $x \in \supp(\mu_j)$ and $R>0$ be such that $\mu_{\ell}(B_{2R}(x))=0$ for all $\ell \ne j$. 
If there exists $\delta \in (0,R)$ such that $B_{\delta}(x) \subset A_j$ and $\mu_j \ge C$ in $B_{\delta}(x)$, with 
\begin{equation*} 
C > 1+\frac{M(\Psi(R)-\Psi(2R_2))}{\Psi(\delta)-\Psi(R)} \frac{1}{|B_{\delta}|},
\end{equation*} 
where
\[R_{2}=R_1+\sqrt[n]{\frac{M}{|B_1|}},\] 
then $\overline{B_{\delta}(x)} \subset \{u_{*,j}>0\}$. 
\end{theorem}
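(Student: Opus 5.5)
The plan is a comparison (minimum principle) argument in $B_R(x)$ for the ``two-sided'' function
\[
w:=u_{*,j}-\sum_{\ell\ne j}u_{*,\ell},
\]
using the representative from \Cref{cor:cont-min-Smmu}. The goal is to show that $w>0$ on $\overline{B_\delta(x)}$; since $u_{*,\ell}\ge 0$, this forces $u_{*,j}>0$ there.

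\textbf{Step 1: the differential inequality in $B_{2R}(x)$.} Since $\mu_\ell(B_{2R}(x))=0$ for $\ell\ne j$, the ball $B_{2R}(x)$ lies in $\mR^n\setminus\bigcup_{\ell\ne j}\supp(\mu_\ell)$ (up to null sets, after the standard choice of representative). So \Cref{lem:improvement-prop1-AS16-SMMU} applies there and gives
\[
-\Delta w\ \ge\ f_j\chi_{Q_j}-\sum_{\ell\ne j}f_\ell\chi_{Q_\ell}=(\mu_j-1)\chi_{Q_j}+\sum_{\ell\ne j}\chi_{Q_\ell}\quad\text{in }B_{2R}(x),
\]
where $Q_i=\{u_{*,i}>0\}$ and $f_\ell=-1$ there. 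Since $B_\delta(x)\subset A_j$ and $u_{*,j}\ge (c-1)G_{A_j}^{\sfm|_{A_j}}>0$ on $A_j$ (proof of \Cref{lem:ex-of-min-overSmmu}), we have $B_\delta(x)\subset Q_j$. Hence $-\Delta w\ge (C-1)$ on $B_\delta(x)$. Outside $B_\delta(x)$ the only negative contribution is $-\chi_{Q_j}$, and it is supported where $w>0$.

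\textbf{Step 2: boundary bound on $\partial B_R(x)$.} On $\partial B_R(x)$ we have $w\ge -\max_{\ell\ne j}u_{*,\ell}$, because the phases are segregated. By \eqref{eq:monotonicity-support} (valid also over $\mS_{m,\bmmu}$, as remarked after \Cref{cor:cont-min-Smmu}), $u_{*,\ell}\le W^{\mu_\ell}=U^{\mu_\ell}-U^{\Bal(\mu_\ell)}$.
\begin{itemize}
\item Every point of $\supp(\mu_\ell)$ is at distance at least $R$ from $\partial B_R(x)$, so $U^{\mu_\ell}\le M\Psi(R)$ there.
\item $\Bal(\mu_\ell)$ has the same mass as $\mu_\ell$ and equals $\sfm$ on $\omega(\mu_\ell)$, with $\sfm(\omega(\mu_\ell))\le M$. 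Granting that $\omega(\mu_\ell)$ lies within distance $\sqrt[n]{M/|B_1|}$ of the convex hull of $\supp(\mu_\ell)$ (to be checked), all distances from points of $\partial B_R(x)$ (note $R<R_1$) to $\supp\Bal(\mu_\ell)$ are at most $2R_2$, so $U^{\Bal(\mu_\ell)}\ge M\Psi(2R_2)$.
\end{itemize}
Thus
\[
w\ge -M\bigl(\Psi(R)-\Psi(2R_2)\bigr)\quad\text{on }\partial B_R(x).
\]

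\textbf{Step 3: the barrier.} Let $\nu=(C-1)\sfm|_{B_\delta(x)}$. For the uniform ball, $U^\nu=\nu(\mR^n)\Psi(\delta)$ on $\partial B_\delta(x)$, $U^\nu\ge \nu(\mR^n)\Psi(\delta)$ inside, and $U^\nu=\nu(\mR^n)\Psi(R)$ on $\partial B_R(x)$. Set
\[
s:=w-U^\nu+\nu(\mR^n)\Psi(R)+M\bigl(\Psi(R)-\Psi(2R_2)\bigr).
\]
Then $s\ge 0$ on $\partial B_R(x)$. The hypothesis on $C$ is exactly
\[
\nu(\mR^n)\bigl(\Psi(\delta)-\Psi(R)\bigr)>M\bigl(\Psi(R)-\Psi(2R_2)\bigr).
\]
So if $s\ge 0$ in $B_R(x)$, then on $\overline{B_\delta(x)}$ we get
\[
w\ \ge\ \nu(\mR^n)\bigl(\Psi(\delta)-\Psi(R)\bigr)-M\bigl(\Psi(R)-\Psi(2R_2)\bigr)\ >\ 0,
\]
which is what we want.

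\textbf{Main obstacle.} The obstacle is proving $s\ge0$ in $B_R(x)$, because $s$ is not globally superharmonic: $-\Delta s\ge -1$ on $Q_j\setminus B_\delta(x)$. First I would apply the minimum principle on the open set $G=\{s<0\}\cap\{w<\varepsilon\}\cap B_R(x)$, or more carefully work with $\min(w,0)=-\sum_{\ell\ne j}u_{*,\ell}$ away from $B_\delta(x)$, which is superharmonic by Kato's inequality. On $B_\delta(x)$, $-\Delta s\ge 0$ directly. The aim is to show that $s$ cannot attain a negative minimum in the region where $w>0$, since there $s>-U^\nu+\dots$ can be compared separately. If this localisation fails, the fallback is to absorb the $-\chi_{Q_j}$ term by adding $\frac{1}{2n}|y-x|^2$ and checking that the resulting error is dominated by the strict inequality in the hypothesis on $C$. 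The lower semicontinuity of $u_{*,j}$ from \Cref{cor:cont-min-Smmu} is what makes the minimum principle and the final pointwise conclusion on the closed ball legitimate.
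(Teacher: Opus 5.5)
Your Steps 1 and 2 are sound. Step 2 gives the same boundary datum $u_{*,\ell}\le K:=M(\Psi(R)-\Psi(2R_2))$ on $\partial B_R(x)$ that the paper uses. Two small repairs there: in $n=2$, combine your two bullets as $\mu_\ell(\mR^n)(\Psi(R)-\Psi(2R_2))\le K$ rather than bounding each potential by $M\Psi(\cdot)$, since $\Psi$ may be negative. And the ``to be checked'' fact holds because $U^{\mu_\ell^r}\in\mF(\mu_\ell)$ for $r=\sqrt[n]{M/|B_1|}$.

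The gap is Step 3. The inequality $s\ge 0$ in $B_R(x)$ is not just hard to prove; it is false in general, so no localisation or barrier trick can rescue it. Take $n=3$, $\mu_1=1000\chi_{B_1(0)}$, $\mu_2=8\chi_{B_1(p)}$ with $|p|=3000$, and $j=1$, $x=0$, $R=1000$, $\delta=1$, $C=1000$.
\begin{itemize}
\item The one-phase domains $B_{10}(0)$ and $B_2(p)$ are disjoint, so $u_{*,i}=W^{\mu_i}$.
\item Here $K<1/3$, so the threshold on $C$ is below $2$ and every hypothesis holds.
\item On $\partial B_1$ you get $w=u_{*,1}=\frac{C}{3}+\frac16-\frac{\rho^2}{2}\approx 283.5$ with $\rho=10$. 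But $s\ge 0$ would force $w\ge \nu(\mR^3)(\Psi(1)-\Psi(1000))-K\approx 332.3$.
\end{itemize}
The culprit is the term you flagged. On $Q_j\setminus B_\delta(x)$ one has $-\Delta u_{*,j}=-1$ wherever $\mu_j=0$. This costs an amount of order $\rho^2/(2n)$, with $\rho$ the size of $Q_j$, which has nothing to do with $K$.

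Neither fallback helps.
\begin{itemize}
\item Adding $\frac{|y-x|^2}{2n}$ lowers $-\Delta$, which is the wrong sign.
\item The correct correction $\frac{R^2-|y-x|^2}{2n}$ introduces an error $\frac{R^2-\delta^2}{2n}$. The hypothesis on $C$ has no quantified slack, so it cannot absorb this.
\item The set $\{s<0\}$ contains points of $Q_j\setminus\overline{B_\delta(x)}$ (in the example, a neighbourhood of $\partial B_1$). There $-\Delta s=-1$, so the minimum principle on $\{s<0\}$ fails too.
\end{itemize}

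The missing idea is to drop any quantitative lower bound and argue locally by contradiction. This lets you discard $u_{*,j}$ outside $B_\delta(x)$ entirely. Suppose $u_{*,j}(y_0)=0$ for some $y_0\in\partial B_\delta(x)$, and set
\[
\sigma(y)=\sum_{\ell\ne j}u_{*,\ell}(y)-u_{*,j}(y)+\frac{|y-y_0|^2}{2n}.
\]
By \Cref{lem:improvement-prop1-AS16-SMMU}, $\sigma$ is subharmonic in $B_R(x)$, since the right-hand side there is at most $(1-\mu_j)\chi_{Q_j}\le 1$. Also $\sigma(y_0)\ge 0$. Now majorise $\sigma$ in two regions:
\begin{itemize}
\item In $B_\delta(x)$, use $-u_{*,j}\le -\frac{C-1}{2n}(\delta^2-|y-x|^2)$, which follows from $-\Delta u_{*,j}\ge C-1$.
\item In the annulus, drop $-u_{*,j}\le 0$. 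Bound $\sum_{\ell\ne j}u_{*,\ell}$ by the radial harmonic function $w_2$ that vanishes on $\partial B_\delta(x)$ and equals $K$ on $\partial B_R(x)$. This works because that sum is subharmonic there and vanishes on $B_\delta(x)$; your Step 2 supplies exactly this boundary value.
\end{itemize}
The hypothesis on $C$ says precisely that the inner radial slope $(C-1)\delta/n$ at $y_0$ exceeds the outer slope $K/\bigl((\Psi(\delta)-\Psi(R))\,n|B_1|\delta^{n-1}\bigr)$ of $w_2$. So the combined majorant has a concave kink at $y_0$, and its averages over $B_\epsilon(y_0)$ are negative for small $\epsilon$. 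This contradicts the sub-mean-value property of $\sigma$ at $y_0$, and it is the paper's argument.
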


\begin{remark}
Note that with this choice of $R_1$ we have    
$\bigcup_{i=1}^{m}\supp\,(\mu_{i}) \subset B_{R_1}(x)$ for any $x \in \bigcup_{i=1}^{m}\supp\,(\mu_{i})$.
Also note that since $c>1$ by assumption it follows from \Cref{lem:ex-of-min-overSmmu} that there is a minimizer $\bfu_*$, and by \Cref{lem:uniq-min-SMMU} it is unique.
We also note that for any  $x \in \bigcup_{i=1}^{m}\supp\,(\mu_{i})$
\begin{equation}
\{W^{\mu_i}>0\} \subset B_{R_{2}}(x) \label{eq:constant-R2}
\end{equation}
for any $x \in \bigcup_{i=1}^{m}\supp\,(\mu_{i})$. This follows from the fact that the mollified measures $\mu_i^r$ satisfy  $\mu_i^r \leq 1$ when $r=\sqrt[n]{M/\abs{B_1}}$, and are supported in a ball of radius $R_1+r$ centered at $x$. Since we have proved above that any minimizer satisfies $u_{*,i} \le W^{\mu_i}$, this gives that the minimizer must be zero outside the ball $B_{R_2}(x)$.
\end{remark}

\begin{proof}[Proof of \Cref{thm:fund-ex-thm1}] 
We may assume without loss of generality that $x=0$, and let $\ell \ne j$ throughout. 
Note that for any $y,z \in B_{R_2}$, we have $|y-z|<2R_2$, so that $\Psi(|y-z|)-\Psi(2R_2) \ge 0$. Hence, defining 
\begin{equation*} 
w_1(y):=\int (\Psi(|y-z|)-\Psi(2R_2))\,\rmd\mu_{\ell}(z), 
\end{equation*} 
we have $w_{1}(y)\ge 0$ for all $y\in\overline{B_{R_2}}$. 
Since $-\Delta w_1=\mu_{\ell}$ and $D_{\ell}\subset B_{R_2}$, we have 
\begin{equation*}
-\Delta(w_{1}-u_{*,\ell}) = \mu_{\ell} + \Delta W_{D_{\ell}}^{\mu_{\ell}} = \Bal_{D_{\ell}}(\mu_{\ell})\ge 0  \text{ in $D_{\ell}$} ,\quad (w_{1}-u_{*,\ell})|_{\partial D_{\ell}}\ge 0. 
\end{equation*}
It then follows from the maximum principle that $u_{*,\ell} \le w_1$ in $D_{\ell}$, and hence in all of $B_{R_2}$. 
Since $R<R_2$, for each $y\in\partial B_R$ we have 
\begin{equation*} 
\begin{aligned} 
& u_{*,\ell}(y) \le w_1(y) = \int_{\mR^{n}\setminus B_{2R}}(\Psi(|y-z|)-\Psi(2R_2))\,\rmd\mu_{\ell}(z) \quad \text{\footnotesize (since $\mu_{\ell}(B_{2R})=0$)} \\ 
&\le(\Psi(R)-\Psi(2R_2))  \int_{\mR^{n}\setminus B_{2R}}\,\rmd\mu_{\ell}(z) \quad \text{\footnotesize (since $\abs{z-y}\ge R$ for all $z\notin B_{2R}$)} \\ 
&\le M(\Psi(R)-\Psi(2R_2)) \quad \text{\footnotesize (since $\norm{\mu_{i}}_{L^{1}(\mR^{n})}\le M$)}. 
\end{aligned} 
\end{equation*} 
Next, define the function 
\begin{equation*} 
w_2(y):=\frac{M(\Psi(R)-\Psi(2R_2)}{\Psi(\delta)-\Psi(R)}(\Psi(\delta)-\Psi(|y|))
\end{equation*} 
in order to satisfy $w_2|_{\partial B_R}=M(\Psi(R)-\Psi(2R_2)$, then
\begin{equation*} 
w_2(y) \ge w_1(y) \ge u_{*,\ell}(y) \quad \text{for all $y\in\partial B_R$}.
\end{equation*} 
Since $B_{\delta}\subset A_{j}\subset D_{j}$, we have $u_{*,\ell}|_{B_{\delta}}=0$, therefore $D_{\ell}\cap B_{\delta}=\emptyset$. 
Since $\mu_{\ell}(B_{2R})=0$, we see that 
\begin{equation*}
-\Delta (w_{2}-u_{*,\ell}) = \Delta W_{D_{\ell}}^{\mu_{\ell}} = \Bal_{D_{\ell}}(\mu_{\ell}) \ge 0 \text{ in $B_{R}\cap D_{\ell}$} ,\quad (w_{2}-u_{*,\ell})|_{\partial(B_{R}\cap D_{\ell})} \ge 0. 
\end{equation*}
because $w_{2}>0$ in $B_{R}\setminus\overline{B_{\delta}}$. 
Another application of the maximum principle gives that $u_{*,\ell}\le w_{2}$ in $B_{R}\cap D_{\ell}$, and hence in all of $B_{R}\setminus\overline{B_{\delta}}$. 

Since $\mu_{j}\ge C > 1$ in $B_{\delta}\subset D_{j}$, we have 
\begin{equation*}
-\Delta u_{*,j} = -\Delta W_{D_{j}}^{\mu_{j}} = \mu_{j}-\overbrace{\Bal_{D_{j}}(\mu_{j})}^{=\,1} \ge C-1 \quad \text{in $B_{\delta}$.} 
\end{equation*}
Hence, 
\begin{equation*} 
-\Delta \left( u_{*,j} - \frac{C-1}{2n}(\delta^2-|\cdot|^2) \right) \ge 0 \text{ in $B_{\delta}$} ,\quad \left.\left( u_{*,j} - \frac{C-1}{2n}(\delta^2-|\cdot|^2) \right)\right|_{\partial B_{\delta}}\ge 0.  
\end{equation*} 
By the maximum principle, it follows that 
\begin{equation*} 
\frac{C-1}{2n}(\delta^2-|y|^2) \le u_{*,j}(y) \quad \text{for all $y\in B_{\delta}$.}
\end{equation*}

Suppose, for the sake of contradiction, that there exists $y_0 \in \partial B_{\delta} \cap \{u_{*,j}=0\}$. We now introduce the functions 
\begin{equation*} 
s(y):=\sum_{\ell \ne j} u_{*,\ell}(y) - u_{*,j}(y) + \frac{|y-y_0|^2}{2n},
\end{equation*} 
and
\begin{equation*} 
 w_3(y) := \left\{\begin{aligned}
& w_2(y) + \frac{|y-y_0|^2}{2n} && y \in B_R \setminus B_{\delta},\\
& -\frac{C-1}{2n}(\delta^2-|y|^2)+\frac{|y-y_0|^2}{2n} && y \in B_{\delta}.
\end{aligned}\right. 
\end{equation*}
First of all a direct computation (using \Cref{lem:improvement-prop1-AS16-SMMU} and that $f_{\ell}=-1$ in $B_R$) shows that $s$ is subharmonic in $B_R$. 
Furthermore it follows from the above that $s \le w_3$ in $B_R$. This means that for small $\epsilon$ we will have
\[\int_{B_{\epsilon}(y_0)} s \,\rmd\sfm \le \int_{B_{\epsilon}(y_0)} w_3 \,\rmd\sfm.\]
So if we can prove that the latter for small $\epsilon$ is negative, then we have a contradiction to the mean value inequality since $s(y_0)=0$.
To compute the jump in the normal (i.e. radial) derivative of $w_3$ in $y_0$ we first note that
\begin{equation*} 
\frac{\partial (w_2 + |\cdot-y_0|)}{\partial r}(y_0) = \frac{M(\Psi(R)-\Psi(2R_1))}{\Psi(\delta)-\Psi(R)} \frac{1}{n|B_1|\delta^{n-1}},
\end{equation*} 
where we used that $-\psi'(\delta)=1/(n|B_1|\delta^{n-1})$. 
Then we also note that by assumption
\begin{equation*} 
\frac{(C-1)\delta}{n} > \frac{M(\Psi(R)-\Psi(2R_2))}{\Psi(\delta)-\Psi(R)}\frac{1}{|B_{\delta}|}\frac{\delta}{n} = \frac{M(\Psi(R)-\Psi(2R_2))}{\Psi(\delta)-\Psi(R)}\frac{1}{n|B_1|\delta^{n-1}},
\end{equation*} 
so
\begin{equation*} 
\frac{\partial (-(C-1)(R^2-(\cdot)^2)/2n)}{\partial r}(y_0)=\frac{(C-1)\delta}{n} > \frac{M(\Psi(R)-\Psi(2R_1))}{\Psi(\delta)-\Psi(R)}\cdot \frac{1}{n|B_1|\delta^{n-1}}.
\end{equation*}
Hence the assumption on $C$ forces a jump in the normal derivative, and from this it easily follows that we for $\epsilon$ small enough must have 
\begin{equation*} 
\int_{B_{\epsilon}(y_0)} w_3 \,\rmd\sfm<0
\end{equation*}
and the proof is done.
\end{proof}

A more or less immediate consequence of the above is the following.
\begin{corollary}\label{cor:main-exist-cor}
Suppose \Cref{assu:standing-assumption-point-mass-v2} hold for some $c>1$, and let $\bfu_*$ be the unique minimizer of $\mJ_{\bff}$ over $\mS_{m,\bmmu}$. Assume furthermore that there for each $j$ and every point $y_0 \in \partial A_j$ is a ball $B_{\delta}(x) \subset A_j$  with  $y_0 \in \partial B_{\delta}(x)$, such that the assumptions in \Cref{thm:fund-ex-thm1} are satisfied.
     Then $\supp(\mu_j)\subset \{u_{*,j}>0\}$, and in particular there is a $m$-phase quadrature domain with respect to $\mu_1,\mu_2,\cdots,\mu_m$.
\end{corollary}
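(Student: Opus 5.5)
We will prove \Cref{cor:main-exist-cor} in three steps: first show that $\overline{A_j}\subset\{u_{*,j}>0\}$, then deduce the support condition, and finally check that $\bfu_*$ satisfies \eqref{eq:multiphase-PDE-strong} in all of $\mR^n$.

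First, $\overline{A_j}\subset\{u_{*,j}>0\}$. By \Cref{lem:ex-of-min-overSmmu} and \Cref{lem:uniq-min-SMMU} the minimizer $\bfu_*$ exists and is unique. By \Cref{cor:cont-min-Smmu} we work with the LSC representative, for which $Q_j=\{u_{*,j}>0\}$ is open. From the proof of \Cref{lem:ex-of-min-overSmmu} we have $u_{*,j}\ge (c-1)G_{A_j}^{\sfm|_{A_j}}>0$ on $A_j$, so $A_j\subset Q_j$. For $y_0\in\partial A_j$, the hypothesis supplies a ball $B_\delta(x)\subset A_j$ with $y_0\in\partial B_\delta(x)$ that satisfies the assumptions of \Cref{thm:fund-ex-thm1}. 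That theorem then gives $\overline{B_\delta(x)}\subset Q_j$, and in particular $y_0\in Q_j$. Hence $\overline{A_j}\subset Q_j$.

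Second, the support condition. By \eqref{eq:conc-cond-Aj}, $\mu_j$ vanishes a.e. outside $A_j$, so $\supp(\mu_j)\subset\overline{A_j}$. This set is compact and lies in the open set $Q_j$, so $\supp(\mu_j)\subset Q_j$ with positive distance to $\partial Q_j$.

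Third, the PDE inequality on all of $\mR^n$, which is the main obstacle. \Cref{lem:improvement-prop1-AS16-SMMU} only gives \eqref{eq:Euler-Lagrange-general-smmu} away from $\bigcup_{\ell\ne j}\supp(\mu_\ell)$. By the above, each $\supp(\mu_\ell)$ is a compact subset of the open set $Q_\ell$. I would therefore localize with a partition of unity. Near $\supp(\mu_\ell)$, for $\ell\ne j$, only $u_{*,\ell}$ is nonzero, since the sets $Q_i$ are disjoint and open. There the left-hand side of \eqref{eq:multiphase-PDE-strong} equals $-\Delta u_{*,\ell}=f_\ell$ by \Cref{cor:cont-min-Smmu}, which coincides with the right-hand side $f_\ell\chi_{Q_\ell}$. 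So equality holds in a neighbourhood of each such support. Gluing this with \eqref{eq:Euler-Lagrange-general-smmu}, which is legitimate because the inequality between distributions is local, gives \eqref{eq:Euler-Lagrange-general-smmu} on all of $\mR^n$. Here $f_i=\mu_i-1$, so this is exactly \eqref{eq:multiphase-PDE-strong}.

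One more check is needed to conclude that $Q_1,\dots,Q_m$ is an $m$-phase quadrature domain. The $u_{*,i}$ are non-negative, LSC and compactly supported, since $u_{*,i}\le W^{\mu_i}$. The sets $Q_i$ are bounded, open and pairwise disjoint, and $\mu_i\in\mM(Q_i)$ by the second step. Alternatively, having \eqref{eq:minsmmu-cond-1} on $\mR^n$, \Cref{prop:uniqueness-procedure} confirms that $\bfu_*$ is consistent with it. The delicate point is the gluing near $\partial Q_j\cap\supp(\mu_\ell)$ for $\ell\ne j$. This intersection is empty here, because $\supp(\mu_\ell)\subset Q_\ell$ and $Q_\ell$ is open and disjoint from $Q_j$. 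I would make this explicit, since it is the point where the possible discontinuity from \Cref{cor:cont-min-Smmu} would otherwise enter.
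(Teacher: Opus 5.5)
Your proposal is correct and follows the argument the paper leaves implicit (it calls the corollary ``a more or less immediate consequence''). You apply \Cref{thm:fund-ex-thm1} at each $y_0\in\partial A_j$, combine this with $A_j\subset Q_j$ to get $\supp(\mu_j)\subset\overline{A_j}\subset Q_j$, and then extend the Euler--Lagrange inequality of \Cref{lem:improvement-prop1-AS16-SMMU} to all of $\mR^n$ by a local check near each $\supp(\mu_\ell)$, where only $u_{*,\ell}$ is nonzero and $-\Delta u_{*,\ell}=f_\ell$; this makes explicit the remark after that lemma that the inequality can only fail where $Q_j$ reaches $\supp(\mu_\ell)$, which you correctly note is now excluded.
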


\begin{proof}[Proof of \Cref{thm:main-2}] 
By assumption the measures are of the form 
\[\mu_j=\sum_{k=1}^{m_j} c^j_k \delta_{x^j_k},\]
where $c^j_k>0$ for each $j,k$. It is easy to see that in case $\delta>0$ is small enough, then $\mu_1^{\delta},\mu_2^{\delta},\cdots,\mu_m^{\delta}$ satisfies the assumptions in \Cref{cor:main-exist-cor}. Indeed as long as $\delta$ is smaller than the distance between any of the points $x^j_k$ we have 
\[\mu_j^{\delta}= \sum_{k} c^j_k|B_{\delta}|^{-1}\sfm|_{B_{\delta}(x^j_k)},\]
and these measures have disjoint supports. Furthermore
\[\frac{c_k^j}{|B_{\delta}|} >1 +\frac{M(\psi(R)-\psi(2R_2))}{\psi{\delta}-\psi(R)}\frac{1}{|B_{\delta}|} \iff c_k^j > |B_{\delta}| + \frac{M(\psi(R)-\psi(2R_2))}{\psi(\delta)-\psi(R)}.\]
But the right hand side above goes towards $0$ as $\delta \to 0$, and therefore we get the needed inequality for $\delta$ small enough.

But if we for such $\delta$ let $\bfu_{*}^{\delta}$ denote the corresponding minimizer and simply define $u_{*,j}=u_{*,j}^{\delta} + U^{\mu_j} - U^{\mu_j^{\delta}}$, then $u_{*,j} \ge u_{*,j}^{\delta}$ with equality outside $\bigcup_k B_{\delta}(x^j_k)$, and it follows immediately that this defines a $m$-phase quadrature domain with respect to $\mu_1,\mu_2,\cdots,\mu_m$.
\end{proof}

\section{Junction point in a symmetric multiphase quadrature domain} 

The aim of this section is to examine some highly symmetric cases in the plane. For convenience, we work with the complex variable $z=x+\bfi y=re^{\bfi\theta}$ throughout this proof (so, for instance, the point $(1,0)$ is simply written as $1$ below). 
More precisely, we consider the case where $\mu_1,\mu_2,\ldots,\mu_m$, with $m\ge 2$, are point masses of equal size, evenly distributed on the unit circle. That is,
\begin{equation*}
\mu_j=C\delta_{e^{\bfi(j-1)\theta_0}} ,\quad \theta_0 =2\pi/m
\end{equation*}
where $C>0$ is a constant. We already know that an $m$-phase quadrature domain exists in this situation. By symmetry, it follows that 
\begin{equation*}
u_j(re^{\bfi\theta})=u_1(re^{\bfi(\theta-(j-1)\theta_0)}). 
\end{equation*}
In particular, the sets  $\{u_j>0\}$ are rotations of each other. Since $\{u_1>0\}$ is clearly symmetric with respect to the $x$-axis, it follows that 
\begin{equation*}
Q_1:=\{u_1>0\} \subset D_1:= \left\{re^{\bfi \theta}: 0<r<\infty, \abs{\theta} < \frac{\pi}{m} \right\},
\end{equation*}
and the remaining domains $Q_2,\cdots,Q_m,D_2,\cdots,D_m$ are obtained by rotating this set successively by the angle $2\pi/m$. 
Using \eqref{eq:fixed-point}, one sees that 
\begin{equation*}
(Q_1,\cdots,Q_m)=(\omega_{D_1}(\mu_1),\cdots,\omega_{D_m}(\mu_m)) 
\end{equation*}
forms a strong $m$-phase quadrature domain with respect to $(\mu_1,\cdots,\mu_m)$, with $u_1=W_D^{\mu_1}$. In other words, this case can in fact be handled using one-phase partial balayage.

Unlike the case in \Cref{ex:counterexample-tpqd-v3}, when $m>2$ this example does not in general arise simply by splitting a two-phase quadrature domain into several pieces. This naturally leads to the following question: 
\begin{equation*}
\text{Does the origin (i.e. junction point) belong to $\partial Q_1$ when $C$ is sufficiently large?
}
\end{equation*}
We now show that $B_{\varepsilon}(0) \cap D \subset \omega_D(\mu_1)$ for large enough $C>0$ when $m=3$, but that this never occurs for $m \geq 4$. In view of the above discussion, this follows as a special case of the following proposition.

\begin{proposition}
Let $D=\{re^{\bfi\theta}: r >0, |\theta| < \theta_0\}$, where $\theta_0 \in (0,\pi/2)$. Then there exists $C>1$ such that $0 \in \overline{\omega_D(C\delta_{1})}$ if and only if $\theta_0 >\pi/4$. 
\end{proposition}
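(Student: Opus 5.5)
We work with $u=W_D^{C\delta_1}$, the smallest function satisfying $u\ge 0$ in $\mR^2$ and $-\Delta u\ge C\delta_1-1$ in $D$, with $\omega_D(C\delta_1)=\{u>0\}$. We will use three facts. First, $u=0$ on $\partial D$, and by \eqref{eq:structure-partial-balayage} we have $\Delta u\le 1$ in $D\setminus\{1\}$. Second, $\Delta u=1$ in $\{u>0\}\setminus\{1\}$. Third, Kato's inequality, as in the proof of \Cref{cor:cont-min-Sm}, gives $\Delta u\ge\chi_{\{u>0\}}\ge 0$ in $B_{1/2}(0)$ for $u$ extended by zero. Everything is governed by two explicit model functions on a sector $S_\theta=\{|\arg z|<\theta\}$. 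The first is the positive harmonic function $h_\theta=r^{\alpha}\cos(\alpha\arg z)$ vanishing on $\partial S_\theta$, with $\alpha=\pi/(2\theta)$. The second is the particular solution
\[
p_\theta(z)=\frac{r^2}{4}\left(1-\frac{\cos 2\arg z}{\cos 2\theta}\right),\qquad \Delta p_\theta=1,\quad p_\theta=0 \text{ on } \partial S_\theta ,
\]
defined for $\theta\neq\pi/4$. We have $p_\theta>0$ in $S_\theta$ exactly when $\theta>\pi/4$, and $p_\theta\le 0$ when $\theta<\pi/4$. Equivalently, $\alpha<2$ exactly when $\theta>\pi/4$. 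The dichotomy in the proposition is the competition between $r^{\alpha}$ and $r^2$ at the vertex.

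\emph{The case $\theta_0>\pi/4$ (existence of $C$).} Fix $\theta_1\in(\pi/4,\theta_0)$ and a small $\rho$ (say $\rho<1/2$). Put $S=S_{\theta_1}\cap B_\rho$ and $b=p_{\theta_1}$. Then $g=b-u$ satisfies $\Delta g=1-\Delta u\ge 0$ in $S$, so $g$ is subharmonic there. On the two straight sides of $S$ we have $b=0\le u$. Hence the maximum principle gives $u\ge b>0$ in $S$ provided $u\ge \max b=\kappa\rho^2$ on the arc $\{|z|=\rho,\ |\arg z|\le\theta_1\}$. This arc is a compact subset of $D$. So it suffices to show that $u\to\infty$ uniformly on compact subsets of $D$ as $C\to\infty$. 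For this we compare $u$ with $C\,G_D(\cdot,1)-q$, where $G_D$ is the Green function of $D$ and $q\ge 0$ is a bounded function with $\Delta q=1$ on a large truncation of $D$. The comparison uses $\Delta u\le 1$ in $D\setminus\{1\}$ and the minimality/maximum principle, working on a bounded truncation $D\cap B_L$, since $u$ has compact support. Since $G_D(\cdot,1)>0$ on compacts, this gives the claim. Then $S\subset\omega_D(C\delta_1)$, so $0\in\overline{\omega_D(C\delta_1)}$.

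\emph{The case $\theta_0<\pi/4$ (no $C$ works).} Here $\alpha>2$. Since $u$ is subharmonic near $0$, vanishes on $\partial D$, and is bounded, comparison with $K h_{\theta_0}$ on $D\cap B_{1/2}$ gives $u\le K r^{\alpha}$ near $0$, with $K=K(C)$. We then combine this with the nondegeneracy of the obstacle problem, which follows from $\Delta u\ge\chi_{\{u>0\}}$. If $x\in\{u>0\}$ with $|x|=t$ small, comparing $u-\tfrac14|\cdot-x|^2$ on $\{u>0\}\cap B_{t/2}(x)$ gives $\sup_{B_{t/2}(x)}u\ge t^2/16$. But the left-hand side is at most $K(3t/2)^{\alpha}=o(t^2)$. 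Hence $\{u>0\}\cap B_\epsilon(0)=\emptyset$ for some $\epsilon>0$, whatever $C$ is.

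\emph{The borderline $\theta_0=\pi/4$, which I expect to be the main obstacle.} Here $\alpha=2$, so the argument above only gives $\sup u\lesssim K t^2$ against $t^2/16$, and $K$ grows with $C$. I would instead work with the resonant particular solution, built from $\operatorname{Im}(z^2\log z)=r^2(\arg z\cos 2\arg z+\log r\,\sin 2\arg z)$ and its rotations. This gives a $\tilde p$ with $\Delta\tilde p=1$, $\tilde p=0$ on $\partial S_{\pi/4}$, and $\tilde p\approx -c\,r^2\log(1/r)\cos 2\arg z$ near $0$. In $\{u>0\}\cap B_\rho$ the function $u-\tilde p$ is harmonic and dominated by $K r^2\cos 2\arg z$, so $u\le \tilde p+K' r^2\cos2\arg z$. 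The negative $r^2\log(1/r)$ term wins for $r<e^{-K''}$, which contradicts $u>0$ near $0$. The delicate point is justifying this harmonic comparison on the a priori irregular set $\{u>0\}$. I would handle it by comparing the nonnegative subharmonic function $u$ with $(\tilde p+K'h_{\pi/4})^+$ through $\Delta u\ge\chi_{\{u>0\}}$, and then repeating the nondegeneracy step.
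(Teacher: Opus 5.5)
Your arguments for $\theta_0>\pi/4$ and for $\theta_0<\pi/4$ are sound, and both differ from the paper's. For $\theta_0>\pi/4$ the paper uses the explicit global subsolution $F_C=W^{\pi(\delta_1+\delta_{-1})}-\frac{C-1}{2\pi}\ln|z^2-1|$, whose positivity set near $0$ lies in a sector of half-angle just above $\pi/4$. It never treats $\theta_0<\pi/4$ separately, but reduces to $\theta_0=\pi/4$ via \eqref{eq:monotonicity-partial-reduction}. There are two small slips. In your growth estimate you need $\Delta q=-1$, e.g.\ $q=(L^2-|z|^2)/4$ on $D\cap B_L$, not $\Delta q=1$. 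And $u\le Kh_{\theta_0}$ on the arc $|z|=\tfrac12$ needs a boundary Lipschitz bound near the corners $\tfrac12 e^{\pm\bfi\theta_0}$, or a comparison with the harmonic majorant. The ``only if'' direction, however, hinges on $\theta_0=\pi/4$, and there your argument has a genuine gap.

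\textbf{The gap.} On $\{u>0\}\cap B_\rho$ the function $u-\tilde p$ is harmonic. But its values on $\partial\{u>0\}$ are $-\tilde p\approx c\,r^2\log(1/r)\cos2\theta$, not $O(r^2\cos2\theta)$. So the bound $u\le\tilde p+K'r^2\cos2\theta$ on $\{u>0\}$ would follow only if you already knew that the free boundary stays away from $0$, which is the claim itself.

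Your proposed repair points the wrong way. To bound $u$ from above you need some $\Psi\ge0$ with $\Delta\Psi\le1$, i.e.\ a supersolution. Then $u-\Psi$ is subharmonic on $\{u>\Psi\}\subset\{u>0\}$, or the minimality \eqref{eq:obstacle-problem} of $W_D^{C\delta_1}$ applies. The inequality $\Delta u\ge\chi_{\{u>0\}}$ gives nothing here. For $\phi=\tilde p+K'h_{\pi/4}$ one has $\Delta\phi^+=\chi_{\{\phi>0\}}+|\nabla\phi|\,\mathcal{H}^1|_{\{\phi=0\}}$, which exceeds $1$ on the zero set of $\phi$. There $u-\phi^+$ has a concave kink and can attain a positive maximum, so no comparison follows. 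Nondegeneracy cannot rescue this: at $\alpha=2$ it only compares $Kt^2$ with $t^2/16$, and $K=K(C)$ is unbounded.

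\textbf{The missing idea.} You need a nonnegative $C^1$ supersolution that vanishes on a small disc at the vertex. This is exactly the paper's $f_s$, equal to $h_s(r)\cos2\theta$ for $r\ge s$ and $0$ for $r<s$, with
\[
h_s(r)=\tfrac14 r^2\ln(r/s)-\tfrac{r^2}{16}+\tfrac{s^4}{16r^2}.
\]
The resonance supplies the logarithm. The conditions $h_s(s)=h_s'(s)=0$ remove any singular Laplacian on $\partial B_s$, so $\Delta f_s=\cos2\theta\le1$ in $D$.

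Combined with your own estimate this closes the borderline case:
\begin{itemize}
\item Your bound $u\le Kr^2\cos2\theta$ near $0$, together with $h_s(\rho)/\rho^2\to\infty$ as $s\to0$, gives $f_s\ge u$ on $\{|z|=\rho\}\cap D$ for small $s$.
\item Glue $\min(u,f_s)$ in $D\cap B_\rho$ with $u$ outside. After subtracting $|z|^2/4$ both pieces are superharmonic, so this is an admissible competitor for $W_D^{C\delta_1}$.
\item Minimality then gives $u\le f_s$, hence $u=0$ in $D\cap B_s$.
\end{itemize}
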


\begin{proof}
First, assume that $\theta_{0}>\pi/4$. By the definition of partial balayage, we have 
\begin{equation*}
-\Delta W^{\pi(\delta_{1}+\delta_{-1})} = \delta_{1}+\delta_{-1} - \sfm|_{B_{1}(1)\cup B_{1}(-1)}, \quad W^{\pi(\delta_{1}+\delta_{-1})}|_{\mR^{n}\setminus(B_{1}(1)\cup B_{1}(-1))} = 0. 
\end{equation*}
Recall that $\frac{1}{2\pi}\log\abs{z}$ is a fundamental solution of $-\Delta$. For each $C>1$, define 
\begin{equation*}
F_{C}(z) := W^{\pi\cdot(\delta_{1}+\delta_{-1})}(z) - \frac{C-1}{2\pi}\ln\abs{z^{2}-1}. 
\end{equation*}
Then $F_{C}$ satisfies 
\begin{equation*}
-\Delta F_{C} = C\delta_{1}+C\delta_{-1} - \sfm|_{B_{1}(1)\cup B_{1}(-1)}  
\end{equation*}
and 
\begin{equation*}
F_{C}(z) = - \frac{C-1}{2\pi}\ln\abs{z^{2}-1} \quad \text{for all $z\in \mR^{n}\setminus(B_{1}(1)\cup B_{1}(-1))$.} 
\end{equation*}
Moreover, 

\begin{equation*}
-\frac{C-1}{2\pi}\ln\abs{z^2-1} = \frac{C-1}{2\pi}(x^{2}-y^{2}) + \mathcal{O}(\abs{z}^{4}) \quad \text{for all $z$ near the origin},
\end{equation*}
and 
\begin{equation*}
W^{\pi(\delta_{1}+\delta_{-1})}(z) = x^{2} + \mathcal{O}(\abs{z}^{3})\quad \text{for all $z$ near the origin}.
\end{equation*}
From these expansions it follows that, if $C$ is chosen sufficiently large, 
\begin{equation*}
\{F_C>0\}\cap\{x>0\} = \left\{ z=x+\bfi y : x>0, F_C(z)>0 \right\} \subset D 
\end{equation*}
since $\theta_{0}>\pi/4$. Furthermore, 
\begin{equation*}
W_{D}^{C\delta_{1}} \ge 0=F_C \quad \text{on $\partial(\{F_C>0\}\cap\{x>0\})$}
\end{equation*}
and 
\begin{equation*}
-\Delta W_{D}^{C\delta_{1}} \ge -\Delta F_{C} \quad \text{in $\{F_C>0\}\cap\{x>0\}$}. 
\end{equation*}
By the maximum principle, it follows that 
\begin{equation*}
W_{D}^{C\delta_{1}} \ge F_{C} > 0 \quad \text{in $\{F_C>0\}\cap\{x>0\}$.} 
\end{equation*}
In other words, $\{F_C>0\}\cap\{x>0\}\subset\omega_{D}(C\delta_{1})$, which implies that $0\in\overline{\omega_{D}(C\delta_{1})}$. 

Next, we show that if $\theta_{0}\le\pi/4$, then $0\notin\overline{\omega_{D}(D\delta_{1})}$ for any $C>0$. By \eqref{eq:monotonicity-partial-reduction}, it suffices to consider $\theta_{0}=\pi/4$. Let $s\in(0,1)$ be a parameter to be chosen later, and define  
\begin{equation*}
f_{s}(re^{\bfi\theta}) = \left\{\begin{aligned}
& h_{s}(r)\cos(2\theta), && s\le r\le 1, \\ 
& h_{s}(1)\cos(2\theta), && r>1, \\ 
& 0, && 0<r<s, 
\end{aligned}\right. \quad \abs{\theta}<\pi/4, 
\end{equation*}
with 
\begin{equation*}
h_{s}(r) = \frac{1}{4}r^{2}\ln\left(\frac{r}{s}\right) - \frac{r^{2}}{16} + \frac{s^{4}}{16r^{2}}. 
\end{equation*}
Since $\theta_{0}=\pi/4$, we have $f_{s}=0$ on $\partial D$. A direct computation gives 
\begin{equation*}
\Delta f_{s}(re^{\bfi\theta}) = \left(h_{s}''(r) + \frac{h_{s}'(r)}{r} - \frac{4h_{s}(r)}{r^{2}} \right)\cos(2\theta) = \cos(2\theta) 
\end{equation*}
for all $r\in(s,1)$ and $\abs{\theta}<\pi/4$. On the other hand, we observe that $f_{s}|_{\partial B_{s}}=0$ and $\nabla f_{s}|_{\partial B_{s}}=0$, which shows that $\Delta f_{s}$ does not place any charge on $\partial B_{s}\cap D$. 
Moreover, 
\begin{equation*}
\partial_{r} f_{s}(re^{\bfi\theta})|_{r=1} = h_{s}'(1)\cos(2\theta). 
\end{equation*}
Thus, in total,  
\begin{equation*}
-\Delta f_{s}(re^{\bfi\theta}) = \gamma_{s} - \cos(2\theta)\sfm|_{B_{1}\setminus\overline{B_{s}}} \quad \text{in $D$,} 
\end{equation*}
where 
\begin{equation*}
\gamma_{s} = h_{s}'(1)\cos(2\theta)\mathcal{H}^{1}|_{\partial B_{1}}. 
\end{equation*}

Since $W_{D}^{\gamma_{s}}$ is the minimal non-negative function satisfying 
\begin{equation*}
-\Delta W^{\gamma_{s}} \ge \gamma_{s}-1 \quad \text{in $D$}, 
\end{equation*}
we have $W_{D}^{\gamma_{s}} \le f_{s}$ in $D$, and hence $\omega_{D}(\gamma_{s}) \subset \mR^{n}\setminus\overline{B_{s}}$. 
On the other hand, 
\begin{equation*}
h_{s}'(1) = \frac{1}{2}\ln\left(\frac{1}{s}\right) + \frac{1}{8} - \frac{s^{4}}{8} \rightarrow +\infty \quad \text{as $s\rightarrow 0_{+}$,} 
\end{equation*}
so for any given $C>0$, we can choose $s>0$ sufficiently small so that 
\begin{equation*}
\omega_{D}(C\delta_{1})\subset\omega_{D}(\gamma_{s}) \subset \mR^{n}\setminus\overline{B_{s}}. 
\end{equation*}
This shows that $0\notin\overline{\omega_{D}(D\delta_{1})}$ for all $C>0$, completing the proof. 
\end{proof}

\subsection*{Acknowledgments} 

P.-Z. Kow was supported by the National Science and
Technology Council of Taiwan (NSTC 112-2115-M-004-004-MY3), and by the National Center for Theoretical Sciences of Taiwan. H. Shahgholian was supported by Swedish Research Council (grant no. 2021-03700). 

\subsection*{Data Availability}
No data were used or created in the course of this research.

\subsection*{Competing Interests}
The authors declare no conflict of interest.

\end{sloppypar}

\bibliographystyle{custom}
\bibliography{ref}

\begin{thebibliography}{KLSS24}

\bibitem[AS16]{AS16MultiPhaseQD}
A.~Arakelyan and H.~Shahgholian.
\newblock Multi-phase quadrature domains and a related minimization problem.
\newblock {\em Potential Anal.}, 45(1):135--155, 2016.
\newblock
  \href{https://mathscinet.ams.org/mathscinet/article?mr=3511808}{MR3511808},
  \href{https://zbmath.org/1346.35237}{Zbl:1346.35237},
  \href{https://doi.org/10.1007/s11118-016-9539-0}{doi:10.1007/s11118-016-9539-0},
  \href{https://arxiv.org/abs/1511.02779}{\texttt{arXiv:1511.02779}}.

\bibitem[AG01]{AG01Potential}
D.~H. Armitage and S.~J. Gardiner.
\newblock {\em Classical potential theory}.
\newblock Springer Monogr. Math. Springer-Verlag London, Ltd., London, 2001.
\newblock
  \href{https://mathscinet.ams.org/mathscinet/article?mr=1801253}{MR1801253},
  \href{https://zbmath.org/0972.31001}{Zbl:0972.31001},
  \href{https://doi.org/10.1007/978-1-4471-0233-5}{doi:10.1007/978-1-4471-0233-5}.

\bibitem[BP04]{BP04KatoInequality}
H.~Brezis and A.~Ponce.
\newblock Kato's inequality when $\delta u$ is a measure.
\newblock {\em C. R. Math. Acad. Sci. Paris}, 338(8):599--604, 2004.
\newblock
  \href{https://mathscinet.ams.org/mathscinet-getitem?mr=2056467}{MR2056467},
  \href{https://zbmath.org/1101.35028}{Zbl:1101.35028},
  \href{https://doi.org/10.1016/j.crma.2003.12.032}{doi:10.1016/j.crma.2003.12.032},
  \href{https://arxiv.org/abs/1312.6498}{\texttt{arXiv:1312.6498}}.

\bibitem[CTV05]{CTV05SegregationProblem}
M.~Conti, S.~Terracini, and G.~Verzini.
\newblock A variational problem for the spatial segregation of
  reaction-diffusion systems.
\newblock {\em Indiana Univ. Math. J.}, 54(3):779--815, 2005.
\newblock
  \href{https://mathscinet.ams.org/mathscinet/article?mr=2151234}{MR2151234},
  \href{https://zbmath.org/1132.35397}{Zbl:1132.35397},
  \href{https://doi.org/10.1512/iumj.2005.54.2506}{doi:10.1512/iumj.2005.54.2506},
  \href{https://arxiv.org/abs/math/0312210}{\texttt{arXiv:math/0312210}}.

\bibitem[EPS11]{EPS11TwoPhaseQD}
B.~Emamizadeh, J.~V. Prajapat, and H.~Shahgholian.
\newblock A two phase free boundary problem related to quadrature domains.
\newblock {\em Potential Anal.}, 34(2):119--138, 2011.
\newblock
  \href{https://mathscinet.ams.org/mathscinet/article?mr=2754967}{MR2754967},
  \href{https://zbmath.org/1216.35161}{Zbl:1216.35161},
  \href{https://doi.org/10.1007/s11118-010-9184-y}{doi:10.1007/s11118-010-9184-y}.

\bibitem[GS09]{GS09PartialBalayage}
S.~Gardiner and T.~Sj{\"{o}}din.
\newblock Partial balayage and the exterior inverse problem of potential
  theory.
\newblock In {\em Potential theory and stochastics in Albac}, volume~11, pages
  111--123, Theta, Bucharest, 2009. Theta Ser. Adv. Math.
\newblock
  \href{https://mathscinet.ams.org/mathscinet-getitem?mr=2681841}{MR2681841},
  \href{https://zbmath.org/1199.31009}{Zbl:1199.31009}.

\bibitem[GS12]{GS12TwoPhaseQD}
S.~J. Gardiner and T.~Sj{\"o}din.
\newblock Two-phase quadrature domains.
\newblock {\em J. Anal. Math.}, 116:335--354, 2012.
\newblock
  \href{https://mathscinet.ams.org/mathscinet/article?mr=2892623}{MR2892623},
  \href{https://zbmath.org/1288.31002}{Zbl:1288.31002},
  \href{https://doi.org/10.1007/s11854-012-0009-3}{doi:10.1007/s11854-012-0009-3}.

\bibitem[GS14]{GS14Stationary}
S.~J. Gardiner and T.~Sj{\"o}din.
\newblock Stationary boundary points for a {L}aplacian growth problem in higher
  dimensions.
\newblock {\em Arch. Ration. Mech. Anal.}, 213(2):503--526, 2014.
\newblock
  \href{https://mathscinet.ams.org/mathscinet/article?mr=3211858}{MR3211858},
  \href{https://zbmath.org/1308.35210}{Zbl:1308.35210},
  \href{https://doi.org/10.1007/s00205-014-0750-0}{doi:10.1007/s00205-014-0750-0}.

\bibitem[GS25]{GS24PartialBalayageHelmholtz}
S.~J. Gardiner and T.~Sj{\"{o}}din.
\newblock Partial balayage for the {H}elmholtz equation.
\newblock {\em Potential Anal.}, 63(4):1671--1697, 2025.
\newblock
  \href{https://mathscinet.ams.org/mathscinet/article?mr=4990500}{MR4990500},
  \href{https://zbmath.org/8127979}{Zbl:8127979},
  \href{https://doi.org/10.1007/s11118-025-10217-0}{doi:10.1007/s11118-025-10217-0},
  \href{https://arxiv.org/abs/2404.05552}{\texttt{arXiv:2404.05552}}.

\bibitem[GT01]{GT01Elliptic}
D.~Gilbarg and N.~S. Trudinger.
\newblock {\em Elliptic partial differential equations of second order (reprint
  of the 1998 edition)}, volume 224 of {\em Classics in Mathematics}.
\newblock Springer-Verlag Berlin Heidelberg, 2001.
\newblock
  \href{https://mathscinet.ams.org/mathscinet-getitem?mr=1814364}{MR1814364},
  \href{https://zbmath.org/1042.35002}{Zbl:1042.35002},
  \href{https://doi.org/10.1007/978-3-642-61798-0}{doi:10.1007/978-3-642-61798-0}.

\bibitem[Gus90]{Gus90QuadratureDomains}
B.~Gustafsson.
\newblock On quadrature domains and an inverse problem in potential theory.
\newblock {\em J. Analyse Math.}, 55:172--216, 1990.
\newblock
  \href{https://mathscinet.ams.org/mathscinet-getitem?mr=1094715}{MR1094715},
  \href{https://zbmath.org/0745.31002}{Zbl:0745.31002},
  \href{https://doi.org/10.1007/BF02789201}{doi:10.1007/BF02789201}.

\bibitem[Gus04]{Gus04LecturesBalayage}
B.~Gustafsson.
\newblock Lectures on balayage.
\newblock In {\em Clifford algebras and potential theory}, volume~7 of {\em
  Univ. Joensuu Dept. Math. Rep. Ser.}, pages 17--63. Univ. Joensuu, Joensuu,
  2004.
\newblock
  \href{https://mathscinet.ams.org/mathscinet-getitem?mr=2103705}{MR2103705},
  \href{https://zbmath.org/1088.31001}{Zbl:1088.31001},
  \href{http://kth.diva-portal.org/smash/record.jsf?pid=diva2%3A492834}{diva2:492834}.

\bibitem[GS94]{GS94PartialBayage}
B.~Gustafsson and M.~Sakai.
\newblock Properties of some balayage operators, with applications to
  quadrature domains and moving boundary problems.
\newblock {\em Nonlinear Anal.}, 22(10):1221--1245, 1994.
\newblock
  \href{https://mathscinet.ams.org/mathscinet/article?mr=1279981}{MR1279981},
  \href{https://zbmath.org/0852.35144}{Zbl:0852.35144},
  \href{https://doi.org/10.1016/0362-546X(94)90107-4}{doi:10.1016/0362-546X(94)90107-4}.

\bibitem[GS05]{GS05QuadratureDomain}
B.~Gustafsson and H.~S. Shapiro.
\newblock What is a quadrature domain?
\newblock In {\em Quadrature domains and their applications}, volume 156 of
  {\em Oper. Theory Adv. Appl.}, pages 1--25. Birkh\"{a}user, Basel, 2005.
\newblock
  \href{https://mathscinet.ams.org/mathscinet-getitem?mr=2129734}{MR2129734},
  \href{https://zbmath.org/1086.30002}{Zbl:1086.30002},
  \href{https://doi.org/10.1007/3-7643-7316-4_1}{doi:10.1007/3-7643-7316-4\_1}.

\bibitem[H{\"o}r03]{Hoermander_book_1}
L.~H{\"o}rmander.
\newblock {\em The analysis of linear partial differential operators {I}.
  {D}istribution theory and {F}ourier analysis}.
\newblock Classics Math. Springer-Verlag, Berlin, second edition, 2003.
\newblock
  \href{https://mathscinet.ams.org/mathscinet/article?mr=1996773}{MR1996773},
  \href{https://zbmath.org/1028.35001}{Zbl:1028.35001},
  \href{https://doi.org/10.1007/978-3-642-61497-2}{doi:10.1007/978-3-642-61497-2}.

\bibitem[KLSS24]{KLSS22QuadratureDomain}
P.-Z. Kow, S.~Larson, M.~Salo, and H.~Shahgholian.
\newblock Quadrature domains for the {H}elmholtz equation with applications to
  non-scattering phenomena.
\newblock {\em Potential Anal.}, 60(1):387--424, 2024.
\newblock
  \href{https://mathscinet.ams.org/mathscinet/article?mr=4696043}{MR4696043},
  \href{https://zbmath.org/7798456}{Zbl:7798456}
  \href{https://doi.org/10.1007/s11118-022-10054-5}{doi:10.1007/s11118-022-10054-5}.
  The results in the appendix are well-known, and the proofs can found at
  \href{https://arxiv.org/abs/2204.13934}{\texttt{arXiv:2204.13934}}.

\bibitem[KSS24]{KSS23Minimization}
P.-Z. Kow, M.~Salo, and H.~Shahgholian.
\newblock A minimization problem with free boundary and its application to
  inverse scattering problems.
\newblock {\em Interfaces Free Bound.}, 26(3):415--471, 2024.
\newblock
  \href{https://mathscinet.ams.org/mathscinet/article?mr=4762088}{MR4762088},
  \href{https://zbmath.org/7902359}{Zbl:7902359},
  \href{https://doi.org/10.4171/ifb/515}{doi:10.4171/ifb/515},
  \href{https://arxiv.org/abs/2303.12605}{\texttt{arXiv:2303.12605}}.

\bibitem[KS24]{KS24MultiplaseQD}
P.-Z. Kow and H.~Shahgholian.
\newblock Multi-phase $k$-quadrature domains and applications to acoustic waves
  and magnetic fields.
\newblock {\em Partial Differ. Equ. Appl.}, 5(3), 2024.
\newblock Paper No. 13,
  \href{https://mathscinet.ams.org/mathscinet/article?mr=4732406}{MR4732406},
  \href{https://zbmath.org/1540.35127}{Zbl:1540.35127},
  \href{https://doi.org/10.1007/s42985-024-00283-1}{doi:10.1007/s42985-024-00283-1},
  \href{https://arxiv.org/abs/2401.13279}{\texttt{arXiv:2401.13279}}.

\bibitem[PSU12]{PSU12FreeBoundary}
A.~Petrosyan, H.~Shahgholian, and N.~Uraltseva.
\newblock {\em Regularity of free boundaries in obstacle-type problems}, volume
  136 of {\em Graduate Studies in Mathematics}.
\newblock American Mathematical Society, Providence, RI, 2012.
\newblock
  \href{https://mathscinet.ams.org/mathscinet-getitem?mr=2962060}{MR2962060},
  \href{https://zbmath.org/1254.35001}{Zbl:1254.35001},
  \href{http://dx.doi.org/10.1090/gsm/136}{doi:10.1090/gsm/136}.

\bibitem[Sak10]{Sa10SmallMod}
M.~Sakai.
\newblock Small modifications of quadrature domains.
\newblock {\em Mem. Amer. Math. Soc.}, 206(969), 2010.
\newblock
  \href{https://mathscinet.ams.org/mathscinet/article?mr=2667421}{MR2667421},
  \href{https://zbmath.org/1198.31001}{Zbl:1198.31001},
  \href{https://doi.org/10.1090/S0065-9266-10-00596-X}{doi:10.1090/S0065-9266-10-00596-X}.

\end{thebibliography}
\end{document}